\documentclass[11pt,a4paper]{amsart}

\usepackage{amssymb,mathrsfs,enumitem,amsmath,amsthm,bbold}
\usepackage[mathscr]{euscript}
\usepackage{color}
\usepackage{graphicx}
\usepackage{tipa}
\usepackage{tikz-cd}

\usepackage{import}
\usepackage{xifthen}
\usepackage{pdfpages}
\usepackage{transparent}

\usepackage[all]{xy}
\definecolor{Chocolat}{rgb}{0.36, 0.2, 0.09}
\definecolor{BleuTresFonce}{rgb}{0.215, 0.215, 0.36}
\definecolor{EgyptianBlue}{rgb}{0.06, 0.2, 0.65}

%\usepackage[backend=bibtex,
%style=numeric,
%    sorting=anyt,
%    isbn=false,
%    url=false,
%    doi=false,
%    maxbibnames=100
%firstinits=true deprecated
%    ]{biblatex}
%\addbibresource{biblio.bib}

\newcommand\cyrillic[1]{
	{\fontencoding{OT2}\fontfamily{wncyr}\selectfont #1}
		}

\newcommand\mathcyr[1]{\text{\cyrillic{#1}}}
\newcommand\Sha{\textnormal{\mathcyr{Sh}}} 

\setcounter{tocdepth}{1}

\usepackage[colorlinks,final,hyperindex]{hyperref}
\hypersetup{citecolor=BleuTresFonce, urlcolor=EgyptianBlue, linkcolor=Chocolat}
\usepackage[OT2,OT1]{fontenc}

\usepackage{fourier}

\usepackage{tikz}
\usetikzlibrary{shapes,fit,positioning,calc,matrix}
\tikzset{
  optree/.style={scale=.5,thick,grow'=up,level distance=10mm,inner sep=1pt},
  comp/.style={draw=none,circle,fill,line width=0,inner sep=0pt},
  dot/.style={draw,circle,fill,inner sep=0pt,minimum width=3pt},
  circ/.style={draw,circle,inner sep=1pt,minimum width=4mm},
  emptycirc/.style={draw,circle,inner sep=1pt,minimum width=2mm},  
  root/.style={level distance=10mm,inner sep=1pt},
  leaf/.style={draw=none,circle,fill,line width=0,inner sep=0pt},
  nodot/.style={draw,circle,inner sep=1pt},
}

%\catcode`\@=11
%\def\nsBox{{\underline{\Box}}}
%\def\Iso{{\tt Iso}}
%\def\sqplus{{\hskip -.1em + \hskip .1em}}
%\def\@evenfoot{\rule{0pt}{20pt}[\DATE] \hfill [{\tt \jobname.tex}]}
%\def\@oddfoot{\rule{0pt}{20pt}{[\tt \jobname.tex}]\hfill [\DATE]}
%\catcode`\@=13

%\swapnumbers

\newtheorem{theorem}{Theorem}[section]
\newtheorem{corollary}[theorem]{Corollary}

\newtheorem{lemma}[theorem]{Lemma}
\newtheorem{proposition}[theorem]{Proposition}

\theoremstyle{definition}
\newtheorem{example}[theorem]{Example}
\newtheorem{remark}[theorem]{Remark}
\newtheorem{definition}[theorem]{Definition}

\DeclareMathAlphabet{\pazocal}{OMS}{zplm}{m}{n}

\def\calA{\pazocal{A}}
\def\calB{\pazocal{B}}

\def\calE{\pazocal{E}}
\def\calF{\pazocal{F}}
\def\calG{\pazocal{G}}
\def\calH{\pazocal{H}}
\def\calI{\pazocal{I}}

\def\calM{\pazocal{M}}
\def\calN{\pazocal{N}}
\def\calO{\pazocal{O}}
\def\calP{\pazocal{P}}

\def\calR{\pazocal{R}}
\def\calS{\pazocal{S}}
\def\calT{\pazocal{T}}

\def\calW{\pazocal{W}}
\def\calX{\pazocal{X}}
\def\calY{\pazocal{Y}}

\DeclareMathOperator{\dgVect}{\mathsf{dgVect}}
\DeclareMathOperator{\sfC}{\mathsf{C}}

\DeclareMathOperator{\rmR}{\mathrm{R}}

\DeclareMathOperator{\Gr}{\mathsf{Gr}}
\DeclareMathOperator{\CGr}{\mathsf{CGr}}
\DeclareMathOperator{\GrCol}{\mathsf{GrCol}}

\DeclareMathOperator{\Tw}{Tw}

\DeclareMathOperator{\Aut}{Aut}
\DeclareMathOperator{\Hom}{Hom}

\DeclareMathOperator{\Des}{Des}

\DeclareMathOperator{\Rec}{\mathsf{Rec}}

\DeclareMathOperator{\grCom}{\mathsf{grCom}}

\DeclareMathOperator{\grGerst}{\mathsf{grGerst}}
\DeclareMathOperator{\grGrav}{\mathsf{grGrav}}
\DeclareMathOperator{\grHyperCom}{\mathsf{grHyperCom}}

\DeclareMathAlphabet{\mathbbold}{U}{bbold}{m}{n}

\def\k{\mathbbold{k}}

\newcommand{\ptwo}[2]{\ensuremath{ 
\vcenter{\hbox{\xymatrix@R=.4pc@C=.9pc{ %@R=.7mm@C=1mm{
    *+[o][F-]{#1}\ar@{-}[r]&*+[o][F-]{#2}
}}}}}

\begin{document}

\title{Reconnectads}

\author{Vladimir Dotsenko}

\address{ 
Institut de Recherche Math\'ematique Avanc\'ee, UMR 7501, Universit\'e de Strasbourg et CNRS, 7 rue Ren\'e-Descartes, 67000 Strasbourg CEDEX, France}

\email{vdotsenko@unistra.fr}

\author{Adam Keilthy}

\address{Department of Mathematical Sciences, Chalmers Technical University and the University of Gothenburg, SE-412 96 Gothenburg, Sweden}

\email{keilthy@chalmers.se}

\author{Denis Lyskov}

\address{National Research University Higher School of Economics, 20 Myasnitskaya street, Moscow 101000, Russia}

\email{ddl2001@yandex.ru}

\date{}

\begin{abstract}
We introduce a new operad-like structure that we call a reconnectad; the ``input'' of an element of a reconnectad is a finite simple graph, rather than a finite set, and ``compositions'' of elements are performed according to the notion of the reconnected complement of a subgraph. The prototypical example of a reconnectad is given by the collection of toric varieties of graph associahedra of Carr and Devadoss, with the structure operations given by inclusions of orbits closures. We develop the general theory of reconnectads, and use it to study the ``wonderful reconnectad'' assembled from homology groups of complex toric varieties of graph associahedra.
\end{abstract}

\maketitle

\setcounter{tocdepth}{1}
\tableofcontents

\section{Introduction}

In this paper, we define and study a new algebraic structure for which we propose the term \emph{reconnectad}\footnote{The intended pronunciation is \textipa{[rIk@nE"ktA:d]} with the stress on the last syllable, as if it were a French word.}.
It captures a certain self-similarity of stratifications of toric varieties whose dual polytopes are the so called graph associahedra, originally defined by Carr and Devadoss in \cite{MR2239078}. Their original goal was to find convex polytopes that would give tilings of Coxeter complexes of general Coxeter groups in the same way the associahedra of Stasheff give tilings of the Deligne--Mumford compactifications $\overline{\calM}_{0,n}(\mathbb{R})$ of the moduli space of real projective lines with marked points. The answer found in \cite{MR2239078}, originating in the theory of wonderful models for subspace arrangements due to De Concini and Procesi \cite{MR1366622}, turned out to be quite remarkable. Particular cases of that construction lead to well known families of polytopes: the associahedra \cite{MR0158400,MR146227}, the cyclohedra \cite{MR1295465}, and the permutahedra~\cite{MR305800}, and the general notion of a graph associahedron has been studied in a wide range of contexts, including algebraic combinatorics, complex algebraic geometry, polyhedral geometry, theoretical computer science, toric topology etc., see, for example, a highly non-exhaustive list of references \cite{MR4176852,MR3340192,MR2997505,MR3439304,MR2252112}. To mention two remarkable concrete examples of applications, face posets of certain graph associahedra are responsible for the ``correct'' combinatorics behind the algebraic structures arising in Floer homology \cite{MR2764887,MR3342676}, and the intersection theory on complex toric varieties of stellahedra is shown to be invaluable for studying combinatorical invariants of matroids, see\cite{MR4477425,Stellar}. \\

Specifically for the associahedra themselves, the corresponding complex toric varieties were studied in \cite{MR4072173} where the natural nonsymmetric operad structure on the homology of those varieties was studied; it was found that many properties of that nonsymmetric operad $\mathsf{ncHyperCom}$ is remarkably similar to the known properties of the symmetric operad $\mathsf{HyperCom}$ obtained as the homology of the operad of complex Deligne-Mumford compactifications $\overline{\calM}_{0,n}(\mathbb{C})$, controlling what is known as the tree level part of a cohomological field theory \cite{MR1702284}. One important deficiency, however, is that the operad $\mathsf{ncHyperCom}$ is not cyclic, meaning that there is no meaningful notion of a compatible scalar product on an algebra over that operad. To deal with this problem, the second author of this paper proposed to view the operadic structure on toric varieties of associahedra in a wider context. This paper is the first step of this bigger programme: we exhibit a way to organise toric varieties of all possible graph associahedra in a remarkable operad-like structure. Classically, components of operads are indexed by finite sets in a functorial way, so that automorphisms of finite sets acts on components. In the situation that we consider, components are indexed by simple connected graphs in a functorial way, and structure operations arise from inclusions of orbit closures of the tori; those orbit closures are products of toric varieties of smaller graph associahedra which is the self-similarity we referred to above. The combinatorics of orbit closures is closely related to the notion of the reconnected complement of a subgraph in a graph, hence the terminology that we propose. \\

The motivation of De Concini and Procesi was to construct a compactification of the complement of the given subspace arrangement by gluing in a normal crossing divisor. In our case, the hyperplane arrangement in question is the arrangement of coordinate hyperplanes, and hence we already are dealing with the complement of a divisor with normal crossings. However, something bizarrely remarkable happens: even in this extremely simple situation, there are other nontrivial choices of compactifications that turn out to be notable algebraic varieties. To give a simple example, if one chooses the \emph{maximal building set} and blows up all possible intersections, in other words, if one considers the case of the complete graph, the resulting varieties, first studied by Procesi \cite{MR1252661}, are found in the work Losev and Manin \cite{MR1786500}; they encode a certain version of the notion of a cohomological field theory \cite{LosevPolyubin,MR3112505}, and which were recently shown in \cite{DST} to play the role analogous to that played by the spaces $\overline{\calM}_{0,n+1}(\mathbb{C})$ in an analogue of the BV formalism arising in topological quantum mechanics~\cite{Lysov}. The operad-like structure on these varieties is very close to that of that describing permutads of Loday and Ronco \cite{MR2995045}, though without a total ordering of the underlying set.\\

A closely related though different operad-like structure in the context of graph associahedra was recently defined by Forcey and Ronco in \cite{https://doi.org/10.1112/jlms.12596} using the formalism of operadic categories of Batanin and Markl \cite{MR3406537}. Our approach is different in several ways. First, the formalism of \cite{https://doi.org/10.1112/jlms.12596} imposes a total ordering of the set of vertices and thus is closer to ``shuffle reconnectads''; second, our approach allows us to view reconnectads as monoids in a certain monoidal category, similarly to how it can be done for operads. The advantage of such approach is that there is a very powerful existing range of ideas and methods available, and we are able to use those ideas and methods in a very meaningful way. If one wishes to place our approach under the umbrella of a general formalism for studying generalisations of operads, Feynman categories of Kaufmann and Ward \cite{MR3636409} furnish an example of such a formalism; however, for us, several other more concrete approaches turn out to be available. \\

It is worth mentioning that our work may be viewed as a shadow of a much more general theory developed by Coron \cite{CoronLattices} for the full generality of geometric lattices and their building sets. However, since unlike the \emph{op. cit.}, our focus is on a very particular case (of Boolean lattices and their graphical building sets), a lot of general statements become much more concrete, some generally very complicated objects become much simpler, and, as a result, some elegant combinatorial patterns shine through. We hope that our work will help in further understanding of the Feynman category of built lattices of \cite{CoronLattices}, and in placing constructions from toric geometry like the Bergman fan of a matroid \cite{MR2191630} in the categorical context.\\

An intriguing question that is for the moment left outside the scope of this paper is to give a generalisation of the Batalin-Vilkovisky operad $\mathsf{BV}$ in the context of reconnectads, and to compute its homotopy quotient by the circle action, at least on the algebraic level, generalising some of the results of \cite{DST}. This task that is not obvious for a number of reasons. First, it is absolutely crucial for reconnectads to have no nontrivial elements associated to the empty graph, which is where the BV operator would be expected to appear. Second, for the triangle graph there are two contradicting wishes of what the corresponding component of the BV reconnectad should be: viewing the triangle as a cycle relates it to path graphs and the $\mathrm{ncBV}$ operad of \cite{MR4072173}, while viewing it as a complete graph relates it to the $\mathrm{tBV}$ twisted associative algebra of \cite{DST}, and reconciling those two relationships is not an easy task. We hope to address this question elsewhere.\\

The paper is organised as follows. In Section \ref{sec:recollections}, we recall some necessary background information. In Section \ref{sec:graphical-vars}, we present three equivalent constructions of toric varieties of graph associahedra, including an interpretation as a ``graphical grassmannian'' which allows us to give a new explicit description of the stratification by toric orbits (Theorem \ref{th:orbits}). In Section \ref{sec:reconnectads}, we give several equivalent definitions of a reconnectad, identify two known particular cases, and define the ``commutative reconnectad''. In Section \ref{sec:algebra-reconnectads}, we develop a wide range of methods for studying algebraic reconnectads (reconnectads whose components are vector spaces). Finally, in Section \ref{sec:wonderful}, we define the gravity reconnectad and determine its presentation by generators and relations (Theorem \ref{th:gravpresent}), obtain a presentation by generators and relations of the ``wonderful reconnectad'' formed by the collection of homologies of all complex toric varieties of graph associahedra (Theorem \ref{th:homologyhyper}), and give an algebraic and a geometric proof of Koszul duality between these reconnectads and of the Koszul property of both of them. Geometrically, the reconnectadic Koszul duality is implemented by the compactifications to open orbits of the torus action on these varieties (Proposition \ref{prop:delignekoszul}); this is analogous to the celebrated result of Getzler \cite{MR1363058}.\\

\subsection*{Acknowledgements} Thanks are due to Anton Khoroshkin and Sergey Shadrin for useful discussions at various stages of preparation of this paper, and to Guillaume Laplante-Anfossi for comments on its first draft. The first author is grateful to Basile Coron for conversations about the general formalism of \cite{CoronLattices} and to Cl\'ement Dupont for discussions of wonderful compactifications and the Deligne spectral sequence.

The first author was supported by Institut Universitaire de France, by Fellowship USIAS-2021-061 of the University of Strasbourg Institute for Advanced Study through the French national program ``Investment for the future'' (IdEx-Unistra), and by the French national research agency (project ANR-20-CE40-0016). The second author was supported by the ``Postdoctoral program in Mathematics for researchers from outside Sweden'' (project KAW 2020.0254). Results of Section 6 were obtained under support of the grant RSF 22-21-00912 of Russian Science Foundation. Some parts of this work were completed during the time the first two authors spent at Max Planck Institute for Mathematics in Bonn, and they wish to express their gratitude to that institution for hospitality and excellent working conditions.

\section{Background, notations, and recollections}\label{sec:recollections}

By $\sfC$ we denote a category that has all coproducts (which we denote $\oplus$) and an initial object (which we denote by $0$), and is equipped with a symmetric monoidal structure (for which we denote the monoidal product by $\otimes$ and the unit object by $1_{\sfC}$) that distributes over coproducts. In most applications, $\sfC$ will be either the category of ``spaces'' (topological spaces, projective varieties etc.), or the category of ``modules'' (vector spaces, chain complexes etc.). For a finite set $I$ and a family of objects $\{V_i\}_{i\in I}$ of $\sfC$, the unordered monoidal product of these objects is defined by 
 \[
\bigotimes_{i\in I} V_i:=\left(\bigoplus_{(i_1,\ldots,i_n) \text{ a total order on } I} V_{i_1}\otimes\cdots\otimes V_{i_n}\right)_{\Aut(I)}.
 \]
In cases where we work with ``modules'', we assume that all of them are defined over a field $\k$ of zero characteristic. All chain complexes are graded homologically, with the differential of degree~$-1$. To handle suspensions of chain complexes, we introduce a formal symbol~$s$ of degree~$1$, and define, for a graded vector space~$V$, its suspension $sV$ as $\k s\otimes V$.

\subsection{Toric varieties}

Let us give a short summary of basics of toric varieties, referring the reader to \cite{MR495499,MR1234037} for more details.

We denote by $\mathbb{G}_m$ the algebraic group $\mathop{\mathrm{Spec}}\left(\k[x,x^{-1}]\right)$, i.e. the multiplicative group $\k^\times$.
An \emph{algebraic torus} is a product of several copies of $\mathbb{G}_m$. A \emph{toric variety} is a normal algebraic variety $X$ that contains a dense open subset $U$ isomorphic to an algebraic torus, for which the natural torus action on $U$ extends to an action on $X$. 

Toric varieties may be constructed from the combinatorial data of a \emph{lattice} (free finitely generated Abelian group) $N$ and a \emph{fan} (collection of strongly convex rational polyhedral cones closed under taking intersections and faces) $\Sigma$ in $N\otimes_{\mathbb{Z}}\mathbb{R}$. Each cone in a fan gives rise to an affine variety, the affine spectrum of the semigroup algebra of the dual cone. Gluing these affine varieties together according to face maps of cones gives an algebraic variety denoted $X(\Sigma)$ and called the toric variety associated to the fan $\Sigma$. 
    
It is known that a toric variety $X(\Sigma)$ is projective if and only if $\Sigma$ is a normal fan of a convex polytope $\calP$, uniquely determined from $\Sigma$ up to normal equivalence. In such situation, we also use the notation $X(\calP)$ instead of $X(\Sigma)$. The variety $X(\calP)$ is smooth if and only if $\calP$ is a \emph{Delzant polytope}, that is a polytope for which the slopes of the edges adjacent to each given vertex form a basis of the lattice~$N$. 

For a complex toric variety $X_{\mathbb{C}}(\calP)$ corresponding to an $n$-dimensional Delzant polytope $\calP$, the Betti numbers of $X_{\mathbb{C}}(\calP)$ are given by the coefficients $h_i$ of the $h$-polynomial of $\calP$ 
 \[
\sum_{i=0}^n h_it^i=\sum_{i=0}^n f_i(t-1)^i,   
 \]
where $f_i$ denotes the number of faces of $\calP$ of dimension $i$.

\subsection{Wonderful compactifications of subspace arrangements}\label{sec:rec-wond}

We also give a short summary of basics of (projective) wonderful compactifications of subspace arrangements in the particular case of the the coordinate subspace arrangements, referring the reader to \cite{MR1366622,MR2038195,MR2746338} for details as well as for the general theory.

Let $I$ be a finite set. We shall consider the vector space $\k^I$, and the collection of coordinate subspaces $C_T:=\{x_t=0\colon t\in T\}$ in that vector space. The Boolean lattice $2^I$ of all subsets of $I$ ordered by inclusion can be identified with the poset of all sums of the subspaces $C_T$, ordered by reverse inclusion. 

A \emph{building set} of $2^I$ is a collection $G$ of subsets of $2^I\setminus\{\emptyset\}$ such that for each $x\in 2^I$ the natural map
 \[
\prod_{g\in \max(G\cap [\emptyset,x])}[\emptyset,g]\to 
[\emptyset,x]
 \]
sending a tuple of elements to their union is an isomorphism of posets. This definition is valid in full generality of atomic lattices \cite{MR2038195}, and in the case of $2^I$ is equivalent to containing all singletons $\{i\}$ and containing, together with any two elements $x,y\in 2^I$ with $x\cap y\ne\emptyset$, their union $x\cup y$. We shall only consider building sets that contain the whole set $I$.

Since every building set contains all singletons, for each building set $G$, the complement in $\k^I$ of the union of the subspaces $C_T$ for $T\in G$ is the algebraic torus $\mathbb{G}_m^I$. Since $\mathbb{G}_m^I=(\k^\times)^I$, we have the maps 
 \[
\mathbb{G}_m^I/\mathbb{G}_m\to\mathbb{P}(\k^I/(\k^T)^\bot)
 \]
for all $T\in G$, and therefore a map
\begin{equation}\label{eq:torusmap}
\mathbb{G}_m^I/\mathbb{G}_m\to\mathbb{P}(\k^I)\times\prod_{T\in G} \mathbb{P}(C_T) .
\end{equation} 
The \emph{projective wonderful compactification} $\overline{Y}_G$ of $\mathbb{G}_m^I/\mathbb{G}_m$ is the closure of the image of $\mathbb{G}_m^I/\mathbb{G}_m$ under the map \eqref{eq:torusmap}. It is known that for every building set $G$, $\overline{Y}_G$ is a smooth projective irreducible variety. The natural projection map $\pi\colon \overline{Y}_G\to \mathbb{P}(\k^I)$ is surjective, and restricts to an isomorphism on $\mathbb{G}_m^I/\mathbb{G}_m$. Additionally, the complement $\overline{Y}_G\setminus (\mathbb{G}_m^I/\mathbb{G}_m)$ is a divisor with normal crossings. Its irreducible components $D_T$ are in one-to-one correspondence with elements $T\in G\setminus\{I\}$, and we have 
    \[
\pi^{-1}(\mathbb{P}(C_T))=\bigcup_{T\leq T'} D_{T'} \ .  
    \]
Intersections of the divisors $D_T$ are described using the combinatorial notion of a nested set. There are several closely related notion of a nested set, and it will be important for us to clearly distinguish between them. A set $\tau\subset 2^{G}$ is said to be \emph{nested} if for any elements $T_1,T_2,\ldots,T_k\in \tau$ which are pairwise incomparable in $2^I$, we have $T_1\cup T_2\cup \cdots\cup T_k\notin \tau$. The collection of all nested sets forms a simplicial complex $\tilde{N}(I,G)$ with the set of vertices $G$. Topologically, $\tilde{N}(I,G)$ is a cone with apex $\{I\}$, and the link of $\{I\}$ is denoted $N(I,G)$ and called the \emph{nested set complex} with respect to $G$. For a subset $\tau\subset 2^G$, the intersection $D_\tau=\bigcap_{T\in \tau} D_T$ is non-empty if and only if $\tau\in N(I,G)$. We shall also use the \emph{augmented nested set complex} obtained by adding to $N(I,G)$ one $-1$-simplex $\varnothing$. For our purposes, it will be convenient to realize the augmented nested set complex as the set $N^+(I,G)$ of all nested sets containing $I$ as an element; removing $I$ from such a nested set gives a bijection with the augmented nested set complex as described above. 

\begin{example}
Let $G$ be the maximal building set of $2^{\{1,2\}}$, that is, $G=\{\{1\},\{2\},\{1,2\}\}$. Then 
 \[
N(\{1,2\},G)=\{
\{\{1\}\},
\{\{2\}\},
\{\{1\},\{2\}\},
 \]
while 
 \[ 
N^+(\{1,2\},G)=\{
\{\{1,2\}\},
\{\{1\},\{1,2\}\},
\{\{2\},\{1,2\}\},
\{\{1\},\{2\},\{1,2\}\}
\}
 \]
\end{example}

Let us recall a useful way to visualise elements of $N^+(I,G)$ using labelled rooted trees going back to \cite{MR2191630,MR2487491}. Suppose that $\tau\in N^+(I,G)$. We associate to it a rooted tree $\mathbb{T}_\tau$ whose vertices are labelled by disjoint subsets of $V_\Gamma$ in the following way. The set of vertices of $\mathbb{T}_\tau$ is $\tau$, and a vertex $T$ is a parent of another vertex $T'$ if in the restriction of the order of $2^I$ to $\tau$ the element $T$ covers the element $T'$, and each vertex $T$ is labelled by the subset 
 \[
\lambda(T):=T\setminus \bigcup_{T'\in \tau, T'\subset T} T' .
 \]
\begin{example}
Let $G$ be the maximal building set of $2^{\{1,2,3,4\}}$. In Figure~\ref{fig:nested-tree}, we give an example of the labelled rooted tree $\mathbb{T}_\tau$ corresponding to the element $\tau=\{\{1\},\{3,4\},\{1,2,3,4\}\}$ of $N^+(I,G)$. (We always depict rooted trees in the way that the root is at the bottom.) 
\begin{figure}
\caption{Rooted tree associated to a nested set}
\label{fig:nested-tree}
 \[
\tau=\{\{1\},\{3,4\},\{1,2,3,4\}\}
\qquad
\Leftrightarrow
\qquad
\mathbb{T}_\tau=\vcenter{\hbox{\begin{tikzpicture}[scale=0.7]
     \draw[thick] (0,0.5)--(-1,2);
     \draw[thick] (0,0.5)--(1,2);
     
     \draw[fill=white, thick] (0,0.5) circle [radius=15pt];
     \draw[fill=white, thick] (-1,2) circle [radius=15pt];
     \draw[fill=white, thick] (1,2) circle [radius=20pt];
    \node at (0,0.5) {\scalebox{1}{$\{2\}$}};
    \node at (-1,2) {\scalebox{1}{$\{1\}$}};
    \node at (1,2) {\scalebox{1}{$\{3,4\}$}};
    
    \node (n) at (0,-0.5) {};
    \end{tikzpicture}}}
 \]
 \end{figure}
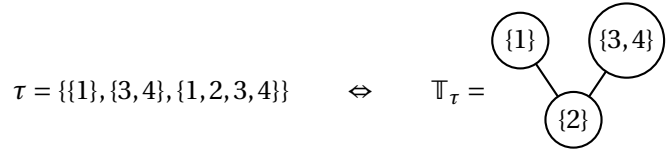  
\end{example}

\subsection{Graphs}

By a \emph{graph} we shall mean what is usually referred to as a finite simple graph. In other words, the datum of a graph is a pair $\Gamma=(V_\Gamma,E_\Gamma)$, where $V_\Gamma$ is a (possibly empty) finite set of \emph{vertices} and $E_\Gamma\subset V_\Gamma^2$ is a symmetric irreflexive relation on $V_\Gamma$ (two vertices $v_1,v_2$ such that $(v_1,v_2)\in E_\Gamma$ are said to be \emph{connected by an edge}). We shall frequently use the following particular examples of graphs:
\begin{itemize}
\item the empty graph $(\emptyset,\emptyset)$, which we denote simply by $\emptyset$,
\item the complete graph $K_n$ on the vertex set $\{1,\ldots,n\}$, whose edges are all possible pairs $(i,j)$ with $i\ne j$,
\item the stellar graph $S_n$ on the vertex set $\{0,\ldots,n\}$, whose edges are all possible pairs $(0,i)$ with $1\le i\le n$,
\item the path graph $P_n$ on the vertex set $\{1,\ldots,n\}$, whose edges are all possible pairs $(i,i+1)$ with $1\le i\le n-1$, 
\item the cycle graph $C_n$ on the vertex set $\mathbb{Z}/n\mathbb{Z}$, whose edges are all possible pairs $(i,i+1)$ with $i\in\mathbb{Z}/n\mathbb{Z}$. 
\end{itemize}

The \emph{disjoint union of two graphs} is defined by the formula 
 \[
\Gamma\sqcup \Gamma' = (V_\Gamma\sqcup V_{\Gamma'}, E_\Gamma\sqcup E_{\Gamma'}). 
 \]
We shall denote by $C_\Gamma\subset V_\Gamma\times V_\Gamma$ the minimal equivalence relation containing~$E_\Gamma$. If every two vertices of $\Gamma$ belong to the same equivalence class of $C_\Gamma$, we say that our graph $\Gamma$ is \emph{connected}, otherwise, $\Gamma$ coincides with the disjoint union of its \emph{connected components} $\mathrm{Conn}(\Gamma)$:
 \[
\Gamma=\bigsqcup_{\Gamma'\in\mathrm{Conn}(\Gamma)}\Gamma'. 
 \]
Recall that for a subset $V$ of $V_\Gamma$, the corresponding \emph{induced subgraph} is the graph $\Gamma_V$ whose vertex set is $V$ and whose edges are precisely the edges that exist in $\Gamma$. 

\section{Graphical varieties}\label{sec:graphical-vars}

In this section, we shall discuss a remarkable collection of algebraic varieties associated to graphs. It relies on a beautiful combinatorial construction that emerged independently in \cite{MR2239078,MR2487491,MR2470574,MR2252112}. 

\subsection{Graph associahedra, graphical building sets, graphical nested sets}

Let $\Gamma$ be a graph. We define \emph{graphical building set} $G_\Gamma$ of the Boolean lattice $2^{V_\Gamma}$ as the set of all subsets $V\subseteq V_\Gamma$ for which the induced graph $\Gamma_V$ is connected (in the context of graph associahedra such subsets are often referred to as the tubes of~$\Gamma$, following \cite{MR2239078}). 

The \emph{graph associahedron} $\calP\Gamma$ is the convex polyhedron obtained as follows. Take a simplex with the vertex set $V_\Gamma$, and index each of its faces by the vertices it misses. Then truncate the faces corresponding to elements of $G_\Gamma$, in the increasing order of dimension. It follows from \cite[Th.~2.6]{MR2239078} that the combinatorial type of $\calP\Gamma$ does not depend on the way in which the partial order by dimension is refined to a total order. As an example, suppose that $\Gamma$ is the path graph $P_3$. Then the vertices of the simplex are the two-dimensional subsets $\{1,2\}$, $\{1,3\}$, and $\{2,3\}$, of which $\{1,3\}\notin G_\Gamma$. Thus, we should truncate two vertices of the triangle, obtaining the pentagon, which is the classical Stasheff associahedron $K^2$.

For the two connected graphs on three vertices, the corresponding polyhedra (in this case, polygons) are displayed in Figure~\ref{fig:hedra}. 

\begin{figure}[h]
\begin{center}
  \includegraphics[scale=0.25]{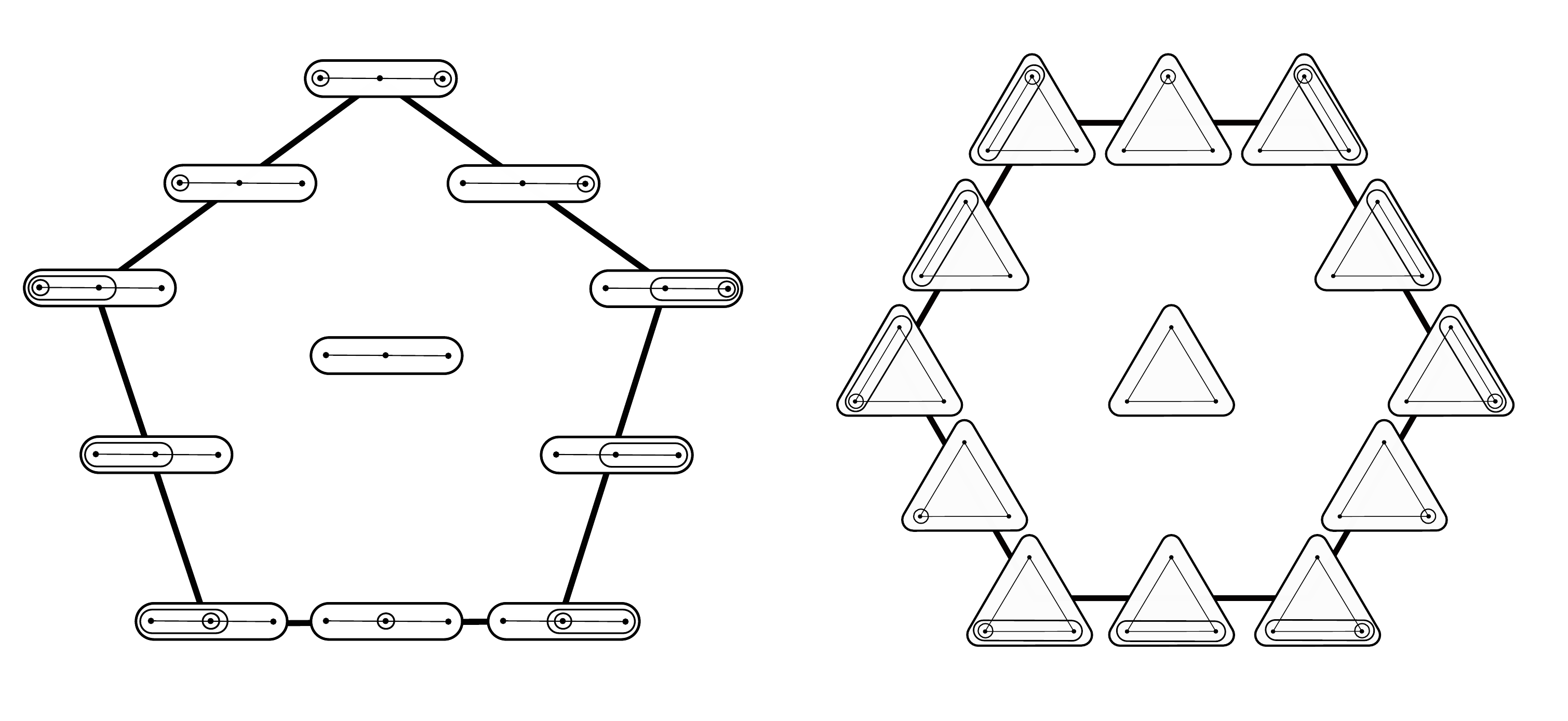}
\end{center}
  \caption{Graph associahedra for the path $P_3$ and the cycle $C_3$}
  \label{fig:hedra}
\end{figure}

In the context of graph associahedra, one often uses the combinatorics of ``tubings''; however, the precise definition of that notion varies throughout the literature, for instance the tubings of \cite{MR2239078} are elements of the nested set complex $N(V_\Gamma,G_\Gamma)$, while tubings of \cite{https://doi.org/10.1112/jlms.12596} are elements of the realisation $N^+(V_\Gamma,G_\Gamma)$ of the augmented nested set complex. Both notions arise naturally in different aspects of the story; to simplify the notation, we denote  
$N(V_\Gamma,G_\Gamma)$ by $N(\Gamma)$ and $N^+(V_\Gamma,G_\Gamma)$ by $N^+(\Gamma)$. It follows easily from the definition that both $N(\Gamma)$ and $N^+(\Gamma)$ consist of sets $\{T_1,\ldots,T_r\}$ of  subsets of $V_\Gamma$ for which each graph $\Gamma_{T_i}$ is connected and for all $i\ne j$ either one of the subsets $T_i$ and $T_j$ is a subset of the other or $\Gamma_{T_i\cup T_j}=\Gamma_{T_i}\sqcup \Gamma_{T_j}$; the only difference is that in the case of $N(\Gamma)$ we require that all $T_i$ are proper subsets of $V_\Gamma$, and in the case of $N^+(\Gamma)$ one of them must be equal to $V_\Gamma$. Two examples of nested sets in $N^+(\Gamma)$ for two different graphs $\Gamma$ are displayed in Figure~\ref{fig:nested}. 

\begin{figure}[h]
  \centering
  \includegraphics[width=0.7\linewidth]{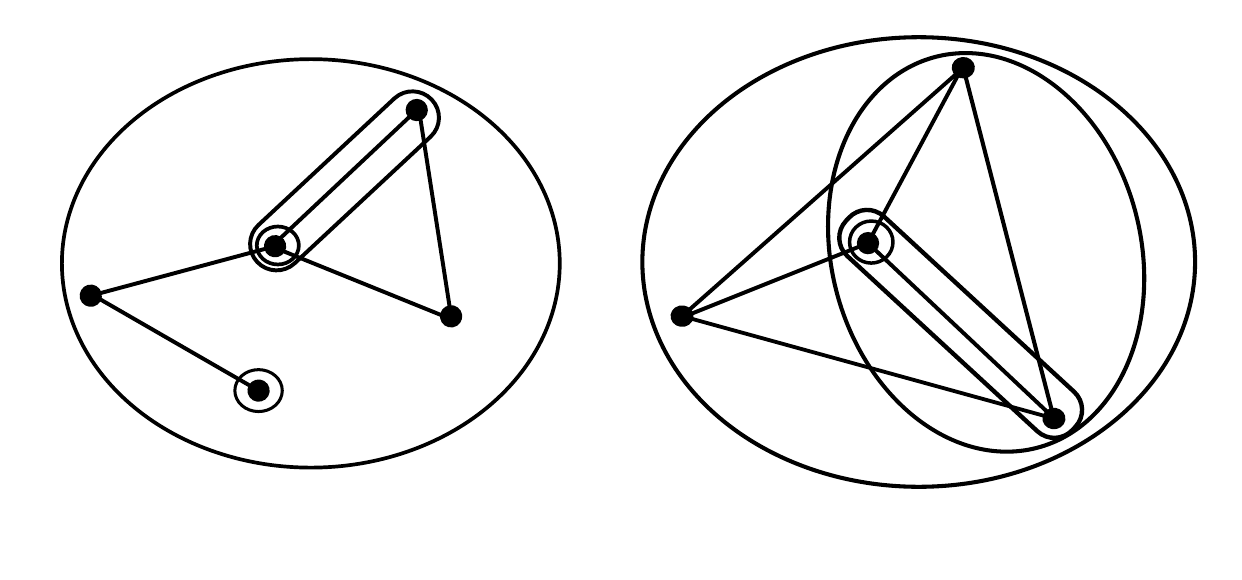}
  \caption{Examples of nested sets in $N^+(\Gamma)$}
  \label{fig:nested}
\end{figure}

It is also prudent to note that in \cite{MR4425832} the notion of a ``nesting'' of a graph are used, where edges rather than vertices are in the spotlight; in our terminology, those correspond to the nested sets of the derived graph \cite{MR262097}.

\subsection{Toric varieties of graph associahedra and wonderful compactifications}

In the context of toric geometry, each truncation of a face of a polytope $\calP$ corresponds to a certain blow up of the toric variety $X(\calP)$. In our case, the simplex with the vertex set $V_\Gamma$ corresponds to the projective space $\mathbb{P}(\k^{V_\Gamma})$, and the toric variety $X(\calP\Gamma)$ can be obtained from the latter by iterated blow ups  centered at coordinate subspaces $\mathbb{P}(U_T)$ corresponding to elements of $G_\Gamma$, in the increasing order of dimension. 

\begin{proposition}
For the graphical building set $G_\Gamma$ of the Boolean lattice $2^{V_\Gamma}$, the corresponding projective wonderful compactification  $\overline{Y}_{G_\Gamma}$ of $\mathbb{G}_m^{V_\Gamma}/\mathbb{G}_m$ is naturally isomorphic to the toric variety $X(\calP\Gamma)$.
\end{proposition}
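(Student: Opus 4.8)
The strategy is to realise both varieties as one and the same iterated blow-up of the projective space $\mathbb{P}(\k^{V_\Gamma})$. On the toric side this is essentially the content of the paragraph preceding the statement: since truncating a face of a polytope corresponds to blowing up the associated torus-orbit closure, $X(\calP\Gamma)$ is obtained from $\mathbb{P}(\k^{V_\Gamma})$ by successively blowing up the strict transforms of the coordinate subvarieties $\mathbb{P}(C_T)$, $T\in G_\Gamma$, processed in increasing order of $\dim\mathbb{P}(C_T)$; that the resulting variety does not depend on how ties within a given dimension are broken is \cite[Th.~2.6]{MR2239078}. On the wonderful side, the structural theorem of De Concini and Procesi \cite{MR1366622} (see also the accounts in \cite{MR2038195,MR2746338}) realises $\overline{Y}_{G_\Gamma}$ by exactly the same recipe applied to the building set $G_\Gamma$ of $2^{V_\Gamma}$: one blows up $\mathbb{P}(\k^{V_\Gamma})$ along the strict transforms of the $\mathbb{P}(C_T)$, $T\in G_\Gamma$, again in increasing order of dimension, and the outcome is independent of how ties are broken. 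Since $\mathbb{P}(C_{\{i\}})$ is a hyperplane and $\mathbb{P}(C_{V_\Gamma})$ is empty, the singleton and total centers contribute nothing in either construction, so both are governed by the centers $\mathbb{P}(C_T)$ with $\Gamma_T$ connected and $2\le|T|<|V_\Gamma|$, blown up in the same admissible order.

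We are thus blowing up the same ambient variety along the same centers in the same order, so the two iterated blow-ups are canonically identified, yielding the isomorphism $\overline{Y}_{G_\Gamma}\cong X(\calP\Gamma)$. This isomorphism is the unique one compatible with the projections of both sides to $\mathbb{P}(\k^{V_\Gamma})$ — equivalently, the unique one restricting to the identity on the common dense torus $\mathbb{G}_m^{V_\Gamma}/\mathbb{G}_m$ — and this characterisation makes it compatible with relabellings of $V_\Gamma$, hence natural in $\Gamma$, which is the sense in which the statement should be read.

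The main point requiring care is the matching of strict transforms: one must check that at each stage the face of the partially truncated polytope that is about to be truncated corresponds precisely to the strict transform of the subvariety $\mathbb{P}(C_T)$ blown up at the corresponding stage on the wonderful side, and that this strict transform is still smooth. This is exactly the compatibility encoded by the building-set axiom: at every stage the strict transforms of the remaining $\mathbb{P}(C_T)$ form a building set of subvarieties in the sense of Li Li \cite{MR2746338}, with nested sets the elements of $N(\Gamma)$, which is the common combinatorial backbone of the two constructions; granting this, the identification is immediate. A reader who prefers to avoid wonderful-model machinery may instead argue directly in toric terms: every center $\mathbb{P}(C_T)$ is torus-invariant, so each blow-up above is a toric morphism and $\overline{Y}_{G_\Gamma}$ is a toric variety; its fan is obtained from the normal fan of the simplex $\Delta^{V_\Gamma}$ by the iterated star subdivisions along the cones dual to the faces indexed by $G_\Gamma$, and this fan is by the standard dictionary the normal fan of $\calP\Gamma$.
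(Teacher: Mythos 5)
Your proof is correct, but it takes a genuinely different route from the paper's. The paper's own argument never touches blow-ups: it observes that $\overline{Y}_{G_\Gamma}$ is toric because the defining map to $\mathbb{P}(\k^{V_\Gamma})\times\prod_{T\in G_\Gamma}\mathbb{P}(C_T)$ is equivariant with a dense orbit, then identifies the associated polytope as the Minkowski sum of the simplices indexed by $G_\Gamma$ using the surjectivity of the projections onto the individual factors together with \cite[Prop.~8.1.4]{MR2394437}, and finally invokes \cite{MR2487491}, where $\calP\Gamma$ is shown to be exactly that Minkowski sum, so the two toric varieties have normally equivalent polytopes and are therefore isomorphic. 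You instead realise both sides as one and the same iterated blow-up of $\mathbb{P}(\k^{V_\Gamma})$: the toric dictionary (truncation of a face equals blow-up of the corresponding orbit closure, as stated before the proposition) on one side, and the blow-up description of wonderful models \cite{MR1366622,MR2746338} on the other, with \cite[Th.~2.6]{MR2239078} and the order-independence built into the wonderful-model construction handling the refinement of the dimension order, and with the correct observation that singleton and total centres are immaterial. The price of your route is the bookkeeping of strict transforms, which you rightly single out as the only delicate point and discharge by Li Li's result that dominant transforms of a building set remain a building set; the paper's Minkowski-sum comparison avoids this induction entirely and is shorter, at the cost of relying on Postnikov's description of graph associahedra. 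A side benefit of your argument is that it makes the matching of the boundary divisors $D_T$ with exceptional divisors/truncation facets explicit, which sits well with the stratification results used later, and your uniqueness-over-$\mathbb{P}(\k^{V_\Gamma})$ reading of ``naturally isomorphic'' is a sensible precision that the paper's proof does not spell out.
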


\begin{proof}
Recall that $\overline{Y}_{G_\Gamma}$ is the closure of the image of the map
 \[
\mathbb{G}_m^{V_\Gamma}/\mathbb{G}_m\to\mathbb{P}(\k^{V_\Gamma})\times\prod_{T\in G_\Gamma} \mathbb{P}(C_T).
 \]
The codomain of this map has the obvious action of $\mathbb{G}_m^{V_\Gamma}/\mathbb{G}_m$, and the map is equivariant, so the algebraic torus acts on $\overline{Y}_{G_\Gamma}$ with an open orbit, therefore the variety $\overline{Y}_{G_\Gamma}$ is toric. Moreover, the projection from $\overline{Y}_{G_\Gamma}$ on each individual factor $\mathbb{P}(\k^{V_\Gamma}/g^\bot)$ is surjective, and hence by \cite[Prop.~8.1.4]{MR2394437}, the polytope corresponding to the toric variety $\overline{Y}_{G_\Gamma}$ is the Minkowski sum of simplices corresponding to elements of $G_\Gamma$. The same is known \cite{MR2487491} for each graph associahedron $\calP\Gamma$. 
\end{proof}

\subsection{Graphical grassmannians}

Let us give an equivalent description of the toric variety of $X(\calP\Gamma)$ as the parameter space of certain collections of subspaces in a vector space; for that reason, we shall call that parameter space a ``graphical grassmannian''. This notion is very close to that of a type A brick manifold \cite{MR4072173,MR3512647} in the case where $\Gamma$ is a path graph.

\begin{definition}
Let $\Gamma$ be a graph. The \emph{graphical grassmannian} $X(\Gamma)$ parametrises collections $\{H_T\}_{T\in G_\Gamma}$ of subspaces of $\k^{V_\Gamma}$ satisfying the following properties:
\begin{itemize}
\item $H_T\subset \k^{T}\subset \k^{V_\Gamma}$,
\item $\dim H_T=|T|-1$,
\item if $T\subset T'$, then $H_T\subset H_{T'}$.
\end{itemize}
\end{definition}

\begin{proposition}
The graphical grassmannian $X(\Gamma)$ is naturally isomorphic to the projective wonderful compactification $\overline{Y}_{G_\Gamma}$ of $\mathbb{G}_m^{V_\Gamma}$. 
\end{proposition}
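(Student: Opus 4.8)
The plan is to exhibit $X(\Gamma)$ and $\overline{Y}_{G_\Gamma}$ as the \emph{same} closed subvariety of one product of projective spaces, and to compare them there. Since $\Gamma$ is connected, $V_\Gamma\in G_\Gamma$, and the maps $\mathbb{G}_m^{V_\Gamma}/\mathbb{G}_m\to\mathbb{P}(\k^{V_\Gamma}/(\k^T)^\bot)$ assemble into \eqref{eq:torusmap}, whose target we write as $\mathbf{P}:=\prod_{T\in G_\Gamma}\mathbb{P}(\k^{V_\Gamma}/(\k^T)^\bot)$, the factor indexed by $T=V_\Gamma$ being $\mathbb{P}(\k^{V_\Gamma})$ since $(\k^{V_\Gamma})^\bot=0$; thus $\overline{Y}_{G_\Gamma}$ is the closure of the image of $\mathbb{G}_m^{V_\Gamma}/\mathbb{G}_m$ in $\mathbf{P}$. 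Using the standard bilinear form on $\k^{V_\Gamma}$, I would identify each factor $\mathbb{P}(\k^{V_\Gamma}/(\k^T)^\bot)$ with the space of hyperplanes of $\k^T$; under this identification a point of $\mathbf{P}$ is a tuple $(H_T)_{T\in G_\Gamma}$ of subspaces $H_T\subseteq\k^T$ with $\dim H_T=|T|-1$, and $X(\Gamma)$ is precisely the closed subvariety of $\mathbf{P}$ cut out by the (manifestly Zariski-closed) nesting conditions $H_T\subseteq H_{T'}$ for $T\subseteq T'$ in $G_\Gamma$.

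First I would verify $\overline{Y}_{G_\Gamma}\subseteq X(\Gamma)$: unwinding the identification, the map \eqref{eq:torusmap} sends $[a]\in\mathbb{G}_m^{V_\Gamma}/\mathbb{G}_m$ to the tuple with $H_T=\{y\in\k^T:\sum_{t\in T}a_ty_t=0\}$, which has dimension $|T|-1$ because the $a_t$ are nonzero, and if $T\subseteq T'$ then any $y\in H_T\subseteq\k^{T'}$ satisfies $\sum_{t\in T'}a_ty_t=\sum_{t\in T}a_ty_t=0$, so $H_T\subseteq H_{T'}$. Hence the torus maps into the closed set $X(\Gamma)$, whence $\overline{Y}_{G_\Gamma}\subseteq X(\Gamma)$. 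Next I would record a rigidity statement. Let $p\colon X(\Gamma)\to\mathbb{P}(\k^{V_\Gamma})$ be the projection onto the factor indexed by $V_\Gamma$, a point of which we write as a line $\k a$ with associated hyperplane $H_{V_\Gamma}=a^\bot$, and let $U\subseteq\mathbb{P}(\k^{V_\Gamma})$ be the open torus $\{\k a:a_i\neq0\text{ for all }i\}\cong\mathbb{G}_m^{V_\Gamma}/\mathbb{G}_m$. If $\k a\in U$ then, for every $T\in G_\Gamma$, $\dim(a^\bot\cap\k^T)=|T|-1$, so the nesting condition $H_T\subseteq H_{V_\Gamma}\cap\k^T=a^\bot\cap\k^T$ forces $H_T=a^\bot\cap\k^T$; hence $p^{-1}(U)\to U$ is an isomorphism, and $p^{-1}(U)$ is exactly the copy of the torus inside $X(\Gamma)$, embedded the way it is in $\overline{Y}_{G_\Gamma}$.

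It remains to prove $X(\Gamma)\subseteq\overline{Y}_{G_\Gamma}=\overline{p^{-1}(U)}$, and this is where the real work lies. I would argue by induction on $|V_\Gamma|$ by analysing $p$ over the coordinate strata. Write $\mathbb{P}_R=\{\k a:\{i:a_i\neq0\}=R\}\cong\mathbb{G}_m^R/\mathbb{G}_m$ for $R\subseteq V_\Gamma$, so $\mathbb{P}_{V_\Gamma}=U$ and $\mathbb{P}(\k^{V_\Gamma})=\bigsqcup_R\mathbb{P}_R$. Fix $\k a\in\mathbb{P}_R$ and set $S=V_\Gamma\setminus R$. For a tube $T$ with $T\cap R\neq\emptyset$ one has $\dim(a^\bot\cap\k^T)=|T|-1$, so $H_T=a^\bot\cap\k^T$ is forced as above; for a tube $T\subseteq S$ — these are exactly the tubes of $\Gamma_S$ — one has $\k^T\subseteq\k^S\subseteq a^\bot$, which makes every nesting condition between such a $T$ and a tube not contained in $S$ automatic, so the remaining freedom is precisely that of a point of $X(\Gamma_S)$. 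Hence $p^{-1}(\mathbb{P}_R)\cong\mathbb{P}_R\times X(\Gamma_S)$. By the inductive hypothesis, together with the elementary identification $X(\Gamma'\sqcup\Gamma'')\cong X(\Gamma')\times X(\Gamma'')$ which handles disconnected $\Gamma_S$, the variety $p^{-1}(\mathbb{P}_R)$ is irreducible of dimension $(|R|-1)+(|S|-|\mathrm{Conn}(\Gamma_S)|)=|V_\Gamma|-1-|\mathrm{Conn}(\Gamma_S)|$, which is $<|V_\Gamma|-1$ unless $R=V_\Gamma$; in particular $\dim X(\Gamma)=|V_\Gamma|-1$, attained only on the torus $\mathbb{T}:=p^{-1}(U)$. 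Finally, to see $p^{-1}(\mathbb{P}_R)\subseteq\overline{\mathbb{T}}$: a point of it whose $X(\Gamma_S)$-coordinate lies in the open torus of $X(\Gamma_S)$, say determined by a line $\k c$ with $c$ supported on all of $S$, arises as the limit as $\epsilon\to0$ of the family $\k(a+\epsilon c)\in\mathbb{T}$ — indeed $H_T^{\k(a+\epsilon c)}$ tends to $a^\bot\cap\k^T$ when $T\cap R\neq\emptyset$ and equals $c^\bot\cap\k^T$ identically when $T\subseteq S$ — and the general point of $p^{-1}(\mathbb{P}_R)$ then follows because by induction $X(\Gamma_S)$ is the closure of its open torus, so $p^{-1}(\mathbb{P}_R)=\mathbb{P}_R\times X(\Gamma_S)=\overline{\mathbb{P}_R\times(\text{open torus of }X(\Gamma_S))}\subseteq\overline{\mathbb{T}}$. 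Taking the union over $R$ gives $X(\Gamma)\subseteq\overline{\mathbb{T}}=\overline{Y}_{G_\Gamma}$, hence equality. The main obstacle is precisely this last step: the fibrewise analysis of $p$ over the coordinate strata and the induction that prevents $X(\Gamma)$ from acquiring spurious components over the boundary; once the ambient spaces are identified correctly, everything else is a routine unwinding of the definitions.
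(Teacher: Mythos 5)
Your proposal is correct, and it starts exactly where the paper's proof does: both use the standard bilinear form to identify each factor $\mathbb{P}(\k^{V_\Gamma}/(\k^T)^\bot)$ with the space of hyperplanes of $\k^T$, so that $X(\Gamma)$ becomes the closed locus in the target of \eqref{eq:torusmap} cut out by the nesting conditions, and both record that over the open torus the whole collection is forced to be $H_T=H_{V_\Gamma}\cap\k^T$, giving $\overline{Y}_{G_\Gamma}\subseteq X(\Gamma)$ and the identification of the open pieces. Where you genuinely diverge is the reverse inclusion $X(\Gamma)\subseteq\overline{Y}_{G_\Gamma}$: the paper treats this as immediate from the open-piece identification, by projective duality ``exactly as in'' the brick-manifold argument of \cite[Th.~5.2.1]{MR4072173}, i.e.\ it does not itself rule out components of the nesting locus lying over the boundary. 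You supply that missing step in a self-contained way: the induction on $|V_\Gamma|$, the fibrewise identification $p^{-1}(\mathbb{P}_R)\cong\mathbb{P}_R\times X(\Gamma_{V_\Gamma\setminus R})$ over the coordinate strata of $\mathbb{P}(\k^{V_\Gamma})$ (using $X(\Gamma'\sqcup\Gamma'')\cong X(\Gamma')\times X(\Gamma'')$ for the disconnected case), and the explicit degenerating family $\k(a+\epsilon c)$ whose coordinatewise limits land on the prescribed boundary point. The paper's route buys brevity and a clean reduction to a known special case; yours buys completeness plus extra structural information, since the product description of the fibres over coordinate strata already anticipates the orbit stratification and the composition maps \eqref{eq:composition-toric} of Section \ref{sec:toric-orbits}. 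Two small points to make explicit when writing this up: dropping the separate first factor $\mathbb{P}(\k^{V_\Gamma})$ of \eqref{eq:torusmap} is harmless only because $V_\Gamma\in G_\Gamma$ and on the torus (hence on its closure) that coordinate is determined by the factor indexed by $T=V_\Gamma$ under the same duality; and over a general field of characteristic zero the ``limit as $\epsilon\to0$'' should be phrased as the value at $0$ of the extension to $\mathbb{A}^1$ of the corresponding morphism into the complete variety $\mathbf{P}$, which is what guarantees the limit point lies in the closure of the torus part.
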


\begin{proof}
We shall see that this is a simple consequence of projective duality, exactly as in \cite[Th.~5.2.1]{MR4072173}. The variety $\overline{Y}_{G_\Gamma}$ is the closure of the image of the map
 \[
\mathbb{G}_m^{V_\Gamma}/\mathbb{G}_m\to\mathbb{P}(\k^{V_\Gamma})\times\prod_{T\in G_\Gamma} \mathbb{P}(C_T).
 \]
Using the canonical basis of $\k^{V_\Gamma}$, we may identify the linear dual $(\k^{V_\Gamma})^*$ with $\k^{V_\Gamma}$. Under this identification, a point of $\mathbb{P}(C_T)$ is identified with a hyperplane in $\k^{T}$, so a point in $\prod_{T\in G_\Gamma} \mathbb{P}(C_T)$ gives rise to a collection of subspaces $\{H_T\}_{T\in G_\Gamma}$ satisfying the first two conditions. Moreover, the third condition is also satisfied because of the compatibility of the canonical basis of $\k^{V_\Gamma}$ with the canonical bases of all $\k^T$. The procedure we described is clearly invertible: to each point of the graphical grassmannian, we may associate a point in $\prod_{T\in G_\Gamma} \mathbb{P}(C_T)$. Moreover, if point of $\k^{V_\Gamma}$ associated to $H_{V_\Gamma}$ is in the complement of the coordinate hyperplanes, all other subspaces $H_V$ are reconstructed uniquely as $H_{V_\Gamma}\cap\k^{V}$, so the open piece of the graphical grassmannian maps isomorphically to the open piece of $\overline{Y}_{G_\Gamma}$, and hence the image of the inverse map is precisely $\overline{Y}_{G_\Gamma}$.
\end{proof}

The torus action is completely transparent in this equivalent description of our varieties. Indeed, the torus $(\mathbb{G}_m)^{V_\Gamma}$ acts on $\k^{V_\Gamma}$ by diagonal matrices, and this action leads to the action on the collections $\{H_T\}_{T\in G_\Gamma}$ in the obvious way. Clearly, the diagonally embedded torus $\mathbb{G}_m\subset (\mathbb{G}_m)^{V_\Gamma}$ acts trivially, so we have the action of the quotient torus $(\mathbb{G}_m)^{V_\Gamma}/\mathbb{G}_m$. 

\subsection{Torus orbits} \label{sec:toric-orbits}
From the point of view of toric geometry, the varieties $X(\calP\Gamma)$ have natural stratifications where the open boundary strata are toric orbits with stabilizers given by the tori corresponding to faces of $\calP\Gamma$. From the point of view of wonderful compactifications, the varieties $\overline{Y}_{G_\Gamma}$ have natural stratifications where the closed boundary strata are intersections $D_\tau$ of the divisors $D_T$ for all nested sets $\tau$. Our previous comparisons of these two approaches show that the two stratifications are equivalent. We shall now give a direct description of the open strata in the language of graphical grassmannians, generalising the stratification of brick manifolds given in \cite[Def.5.1.4]{MR4072173}; since we do not restrict ourselves to the boundary, it is more natural to use the augmented nested set complex $N^+(\Gamma)$ to index the strata. 

Suppose that $\tau\in N^+(\Gamma)$. We define a variety $X(\Gamma,\tau)$ as the set of collections $\{H_T\}_{T\in G_\Gamma}\in X(\Gamma)$ satisfying two conditions, both expressed in terms of the labels $\lambda(T)$ of the tree $\mathbb{T}_\tau$ described in Section \ref{sec:rec-wond}:  
\begin{itemize}
\item For each $T\in\tau$ with the parent $T'$ in $\mathbb{T}_\tau$ and for each $v\in \lambda(T')$ such that $T\cup\{v\}\in G_\Gamma$, we have 
 \[
H_{T\cup\{v\}}=\k^{T}.
 \] 
\item For each $T\in\tau$ and each $v_1\ne v_2\in \lambda(T)$, 
if for some $T'\subset T\setminus\{v_1,v_2\}$ we have 
 \[
T'\in \tau\cup\{\emptyset\}\text{  and } T'\cup\{v_1\}\cup\{v_2\}\in G_\Gamma,
 \] 
then the subspace $H_{T\cup\{v_1\}\cup\{v_2\}}$ is different from $\k^{T\cup\{v_1\}}$ and from $\k^{T\cup\{v_2\}}$.
\end{itemize}
Note that the first condition is a ``boundary'' condition and that the second one is an ``open'' condition.

As a sanity check, let us consider $\tau=\{V_\Gamma\}$. In this case, the first condition is empty, and the second condition, once restricted to edges, is easily seen to describe precisely the open piece discussed above ($H_{V_\Gamma}$ is in the complement of all coordinate hyperplanes), as expected.

\begin{theorem}\label{th:orbits}
The subsets $X(\Gamma,\tau)$ for $\tau\in N^+(\Gamma)$ describe the stratification of $X(\Gamma)$ by torus orbits.
\end{theorem}

\begin{proof}
Let us first show that 
 \[
X(\Gamma)=\bigcup_{\tau\in N^+(\Gamma)} X(\Gamma,\tau).
 \]
Given a collection of subspaces parametrised by $X(\Gamma)$, let consider all $T\in G_\Gamma$ which satisfy $H_T=\k^{T'}$ for some $T'\subset T$ and which are maximal such, that is, there does not exist $T\subset S\in G_\Gamma$ such that $H_S=\k^{S'}$ with $T\not\subset S'$. 

We claim we must have $H_S=\k^{T'}$ for all $S\in G_\Gamma$ with $S=T'\cup\{v_S\}$ for some $v_S\in V_\Gamma$. Otherwise, let us take such $S$ and consider $H_{T\cup S}$. Since $H_S\neq H_T$, we must have $H_{T\cup S}=H_S+H_T= \k^{T'}+\k w_S$, where $w_S\in \k^{T'}\oplus \k e_{v_S}$. But this implies $V_{T\cup S}=\k^{T'}\oplus \k e_{v_S}=\k^{S}$, contradicting the maximality of $T$.

Running through all such $H_T$, we obtain a collection
 \[
\tilde\tau:=\{T'\colon H_T=\k^{T'} \text{ for  some maximal } T\}.
 \]
We claim that $\tilde\tau\in N(\Gamma)$, so $\tau:=\tilde\tau\sqcup\{V_\Gamma\}\in N^+(\Gamma)$. Suppose that there exist $T_1,T_2\in\tilde\tau$ that are incomparable and satisfy $\Gamma_{T_1\cup T_2}\ne\Gamma_{T_1}\sqcup\Gamma_{T_2}$. Then $H_{T_1\cup T_2}$ contains $\k^{T_1}$ and $\k^{T_2}$. But 
 \[ 
\dim (\k^{T_1}+\k^{T_2}) = |T_1|+|T_2|-|T_1\cap T_2|
 \]
while
 \[
\dim H_{T_1\cup T_2} = |T_1|+|T_2|-|T_1\cap T_2|-1,
 \]
so we have a contradiction, and therefore $\tau:=:\tilde\tau\sqcup\{V_\Gamma\}\in N^+(\Gamma)$. The collection of subspaces $\{H_T\}$ clearly satisfies the first condition defining $X(\Gamma,\tau)$. If it fails to satisfy the second condition, then 
one can find $T\in\tau$, $v_1\ne v_2\in \lambda(T)$, and $T'\subset T\setminus\{v_1,v_2\}$ with 
 \[
T'\in \tau\cup\{\emptyset\}\text{  and } T'\cup\{v_1\}\cup\{v_2\}\in G_\Gamma,
 \] 
so that $H_{T\cup\{v_1\}\cup\{v_2\}}$ coincides with $\k^{T\cup\{v_1\}}$. In that case, we can find a maximal such $T$, in the sense of the previous paragraph, which would imply that we have $T\cup\{v_1\}\in \tau$, which is a contradiction, since in this case $v_1\notin \lambda(T)$. 

We next show that the subsets $X(\Gamma,\tau)$ for various $\tau\in N^+(\Gamma)$ are disjoint, so that
 \[
X(\Gamma)=\bigsqcup_{\tau\in N^+(\Gamma)} X(\Gamma,\tau).
 \] 
Suppose we have a collection $\{H_T\}$ contained in $X(\Gamma,\tau_1)\cap X(\Gamma,\tau_2)$. The argument will depend on whether $\tau_1\cup \tau_2\in N^+(\Gamma)$.

If $\tau_1\cup \tau_2\notin N^+(\Gamma)$, there exist $T_1\in\tau_1$ and $T_2\in\tau_2$ that are incomparable and $\Gamma_{T_1}\sqcup \Gamma_{T_2}\neq\Gamma_{T_1\cup T_2}$. In this case, there are vertices $\{v_1,v_2\}$ such that $v_1\in T_1\setminus T_2$, $v_2\in T_2\setminus T_1$, and $T_1\cup\{v_2\}\in G_\Gamma$, $T_2\cup \{v_1\}\in G_\Gamma$. The first condition for $X(\Gamma,\tau_1)$ implies that $H_{T_1\cup\{v_2\}}=\k^{T_1}$. Note that the first condition defining the subset $X(\Gamma,\tau)$ also forces $H_{T'}=\k^{T'\setminus\{v\}}$ for all $T'\in G_\Gamma$ such that $v\subset T'\subset T\cup\{v\}$. Therefore, $H_{(T_1\cap T_2)\cup\{v_1\}\cup\{v_2\}}=\k^{(G_1\cap G_2)\cup \{v_1\})}$. Similarly, $H_{T_2\cup\{v_1\}}= \k^{T_2}$, and so 
 \[
H_{(T_1\cap T_2)\cup\{v_1\} \cup  \{v_2\}}=\k^{(T_1\cap T_2)\cup\{v_2\})},
 \] 
which is a contradiction.

Suppose that $\tau_1\cup\tau_2\in N^+(\Gamma)$. Let $T\in G_\Gamma$ belong to $\tau_1\setminus\tau_2$. Because of the nested set condition for $\tau_1\cup\tau_2$, there exists at least one vertex $v$ for which $T\cup\{v\}\in G_\Gamma$ and the set $T\cup\{v\}$ is disjoint from every set of $\tau_2$ that does not contain $T$ as a subset. Thus, the first condition for $X(\Gamma,\tau_2)$ implies that $H_{T\cup\{v\}}=\k^{T}$, which means that $T\in \tau_2$, a contradiction.  

Finally, we note that the first condition defining the subset $X(\Gamma,\tau)$ implies that, once the second condition is applicable, the subspace $H_{T\cup\{v_1\}\cup\{v_2\}}$ different from $\k^{T\cup\{v_1\}}$ and from $\k^{T\cup\{v_2\}}$ always contains $\k^{T}$, which gives a $\mathbb{G}_m$ of choices for such subspace. This easily implies that the subsets $X(\Gamma,\tau)$ are toric orbits. In fact, examining the second condition defining the subset $X(\Gamma,\tau)$, it is immediate to describe the stabiliser:
 \[
\mathrm{Stab}(X(\Gamma,\tau))=\left(\prod_{T\in\tau} \mathbb{G}_m\right)/\mathbb{G}_m\subset
\left(\prod_{T\in\tau} \mathbb{G}_m^{\lambda(T)}\right)/\mathbb{G}_m\cong(\mathbb{G}_m)^{V_\Gamma}/\mathbb{G}_m,
 \]
where the product is over all the diagonal inclusions $\mathbb{G}_m\subset\mathbb{G}_m^{\lambda(T)}$.
\end{proof}

It follows from either of the descriptions of our varieties that the closure of each stratum is isomorphic to a product of graphical varieties for smaller graphs, and so inclusions of closed strata provide the collection of all graphical varieties with an operad-like structure. The notion that is instrumental for describing this structure is that of a reconnected complement of a subgraph, which we shall now recall.

\begin{definition}[reconnected complement]
Let $V\in 2^{V_\Gamma}$. The \emph{reconnected complement} of $V$ in $\Gamma$, denoted $\Gamma^*_V$, is the graph obtained from $\Gamma$ by deleting all vertices from $V$ and adding some new edges: specifically, if there is a path in $\Gamma$ that connects two vertices $v_1,v_2\in V_\Gamma\setminus V$ and only uses vertices of $V$ along the way, then there is an edge between $v_1$ and $v_2$ in $\Gamma^*_V$. 
\end{definition}

\begin{figure}[h]
  \centering
  \includegraphics[width=0.9\linewidth]{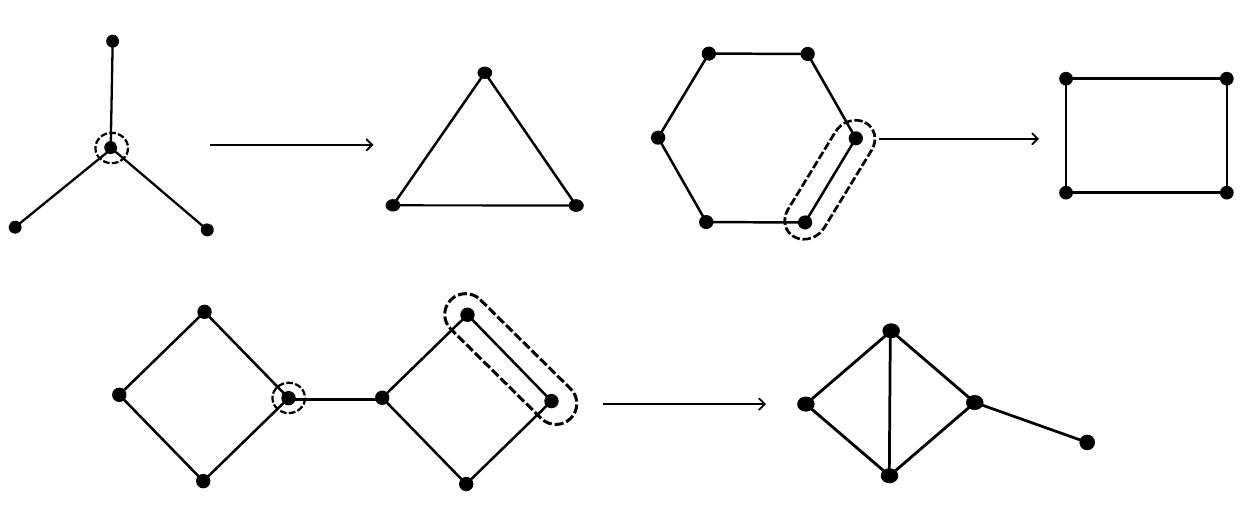}
  \caption{Examples of reconnected complements}
\end{figure}

In the context of graph associahedra, Devadoss and Carr \cite{MR2239078} proved in \cite[Th.~2.9]{MR2239078} that facets of $\calP\Gamma$ are in one-to-one correspondence with elements of $G_\Gamma$ and that the facet corresponding to $V_\Gamma\ne T\in G_\Gamma$ is combinatorially equivalent to the product $\calP\Gamma^*_T\times\calP\Gamma_T$, where $\Gamma^*_T$. From the point of view of toric geometry, this corresponds to the following picture. Among the toric orbits we described above, the orbits of codimension one are the orbits $X(\Gamma,\tau)$ where $\tau=\{T,V_\Gamma\}$ for some $V_\Gamma\ne T\in G_\Gamma$, and we expect that for such $\tau$ we have
 \[
\overline{X(\Gamma,\tau)}\cong X(\Gamma^*_T)\times X(\Gamma_T).
 \]
Let us construct a map 
 \[
\psi_{\Gamma,T}\colon X(\Gamma^*_T)\times X(\Gamma_T)\to X(\Gamma)
 \]
whose image coincides with $\overline{X(\Gamma,\tau)}$. Suppose that 
 \[
\{H'_S\}_{S\in G_{\Gamma^*_T}} \in X(\Gamma^*_T),\quad \{H''_S\}_{S\in G_{\Gamma_T}} \in X(\Gamma_T).
 \] 
We define the image of this pair as the element
$\{H_S\}_{S\in G_{\Gamma}} \in X(\Gamma)$ defined by the formula
\begin{equation}\label{eq:composition-toric}
H_S = 
\begin{cases}
H'_{S \setminus T} \oplus \k^{S\cap T}, \text{ if } S \not\subset T\\
\qquad H''_S, \qquad \text{\, if } S \subset T.
\end{cases}
\end{equation}
By a direct inspection, the image of this map is the closed stratum $\overline{X(\Gamma,\tau)}$. Considering how smaller closed strata are obtained of iterations of such maps is precisely what leads one to the definition of a reconnectad in the following section. Let us note that, though this last remark used the reconnected complement of $T\in G_\Gamma$, the definition is available for any $V\in 2^{V_\Gamma}$, not necessarily belonging to $G_\Gamma$, and we shall later use it in full generality. 

\begin{remark}
The combinatorics of reconnected complements was used in the recent work of Forcey and Ronco \cite{https://doi.org/10.1112/jlms.12596} to define a strict operadic category in the sense of Batanin and Markl \cite{MR3406537}. The formalism we develop below is very close to that one, except for two important differences. Firstly, we do not require the set of vertices of a graph to carry a linear order, and when we impose a linear order to define the related ``shuffle'' formalism, this will constrain the existing morphisms. Secondly, operads for the operadic category of \cite{https://doi.org/10.1112/jlms.12596} are defined by taking as the starting point the reconnected complements for $T\in G_\Gamma$, and then building all possible composition maps as composites of these. Our approach will arrive at these operations from two other definitions which exhibit clearly defined categorical constructions that are not apparent at a first glance for the approach of \cite{https://doi.org/10.1112/jlms.12596}.
\end{remark}

\section{Reconnectads}\label{sec:reconnectads}

In this section, we shall define and study a new operad-like structure responsible for stratifications of graphical varieties by toric orbits. We begin with the definition of a graphical collection, generalising the notion of a species of structures valued in the category $\sfC$, see \cite{MR1629341,MR633783}.

\begin{definition}[graphical collection]
The \emph{groupoid of connected graphs} $\CGr$ is the category whose objects are connected simple graphs and whose morphisms are graph isomorphisms.
A \emph{graphical collection} with values in $\sfC$ is a functor $\calF\colon \CGr\to\sfC$ satisfying $\calF(\emptyset)=1_{\sfC}$. All graphical collections with values in $\sfC$ form a category $\GrCol_{\sfC}$, where morphisms are natural transformations. 
\end{definition}

\subsection{The monad of nested sets}

Operads can be thought of as algebras over the monad of trees. We shall now define the monad of nested sets on the category of graphical collections. 

\begin{definition}[nested set endofunctor]
The \emph{nested set endofunctor} $\calN$ on the category of graphical collections is defined as follows. Let $\calX$ be a graphical collection. The graphical collection $\calN(\calX)$ has the components
 \[
\calN(\calX)(\Gamma):=\bigoplus_{\tau\in N^+(\Gamma)}\bigotimes_{T\in \tau}\calX((\Gamma_T)^*_{T\setminus\lambda(T)}).
 \] 
\end{definition}

Let us explain how to give $\calN$ a natural structure of a monad. The unit is the natural inclusion $\calX\to\calN(\calX)$ corresponding to $\tau=\{V_\Gamma\}$. The natural maps
 \[
\calN(\calN(\calX))\to\calN(\calX)
 \]
come from the slogan saying that ``a nested set of nested sets is a nested set''. More precisely, suppose that $\tau\in N^+(\Gamma)$. We would like to define a map 
 \[
\bigotimes_{T\in \tau}\calN(\calX)((\Gamma_T)^*_{T\setminus\lambda(T)})\to \calN(\calX)(\Gamma). 
 \]
For that, we note that the left hand side is the sum of tensor products over all possible nested sets of connected graphs $(\Gamma_T)^*_{T\setminus\lambda(T)}$ on the vertex sets $\lambda(T)$, and to each such collection of nested sets one can canonically associate a nested set of $\Gamma$ by joining together the subsets which violate the condition $\Gamma_{U\cup V}=\Gamma_U\sqcup\Gamma_V$, thus obtaining a map of the required form. (In terms of trees $\mathbb{T}_\tau$ associated to nested sets, this corresponds to grafting of trees, which can be used to establish the associativity required by the monad axioms.)

In particular, for every $\tau=\{T,V_\Gamma\}$ with $V_\Gamma\ne T\in G_\Gamma$, the corresponding tree $\mathbb{T}_\tau$ has $\lambda(T)=T$ and $\lambda(V_\Gamma)=V_\Gamma\setminus T$, and moreover,
 \[
(\Gamma_T)^*_{\emptyset}=\Gamma_T, \quad (\Gamma_{V_\Gamma})^*_{V_\Gamma\setminus (V_\Gamma\setminus T)}=(\Gamma)^*_T,
 \]
so we find a summand $\calX(\Gamma^*_T)\otimes\calX(\Gamma_T)$ in $\calN(\calX)$. This observation will shine through in the following sections; for now we use this particular kind of nested sets in an example.

\begin{example}
Let us consider the graph $\Gamma$ depicted in the top left corner of Figure~\ref{fig:nested set composition}. We choose the subset $T=\{1,2,3\}$ for which the induced graph $\Gamma_T$ is isomorphic to $K_3$ and the reconnected complement $\Gamma^*_T$ is isomorphic to $P_3$. 
\begin{figure}[ht]
    \centering
    \def\svgwidth{\columnwidth}
    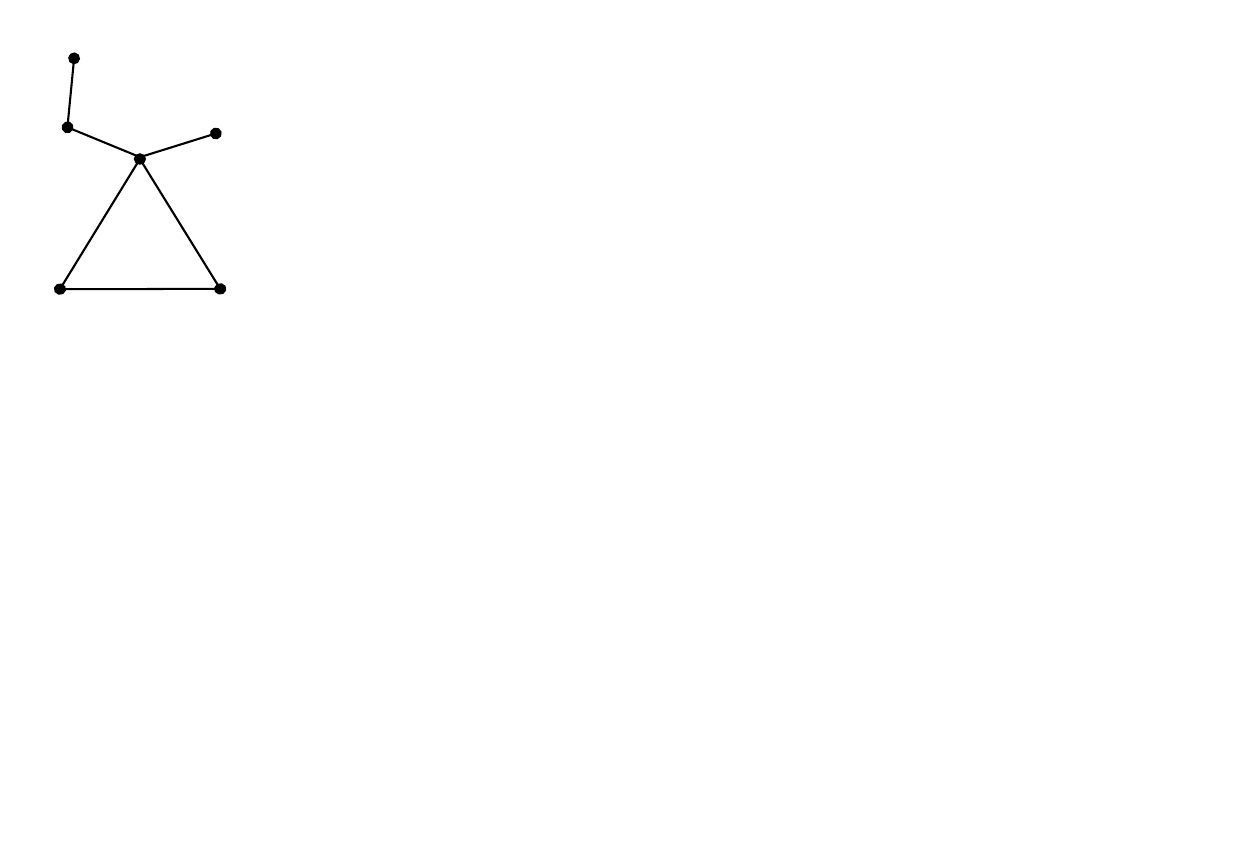
    \caption{Nested sets of nested sets and compositions}
    \label{fig:nested set composition}
\end{figure}
For the nested set $\tau=\{T,V_\Gamma\}$, there is a term in $\calN(\calN(\calX))$ corresponding to the tensor product of the term of $\calN(\calX)(\Gamma^*_T)$ associated to the nested set $\{\{4\},\{6\},\{4,5,6\}\}$ and the term of $\calN(\calX)(\Gamma_T)$ associated to the nested set $\{\{1,2\},\{1,2,3\}\}$. Joining these together gives us the nested set $\{\{4\},\{1,2\},\{1,2,3\},\{1,2,3,6\},\{1,2,3,4,5,6\}\}$ of $\Gamma$ depicted in the bottom right corner of Figure~\ref{fig:nested set composition}. 
\end{example}

\begin{definition}[reconnectad, monadic definition]
A \emph{reconnectad} is an algebra over the nested set monad. Concretely, it is a graphical collection $\calF$ equipped with structure maps
 \[
\calN(\calF)\to\calF
 \]
compatible with the monad structure on $\calN$ in the usual sense. 
\end{definition}

Our definition of a reconnectad leads to an immediate definition of the free reconnectad generated by a graphical collection $\calX$. 

\begin{definition}[free reconnectad]
The \emph{free reconnectad} generated by a graphical collection $\calX$ is the graphical collection $\calN(\calX)$ with the structure maps
 \[
\calN(\calN(\calX))\to\calN(\calX)
 \]
defined above to encode the monad structure.
\end{definition}

The following proposition, which will be very useful for us, is straightforward from the definition.

\begin{proposition}\label{prop:grEnd-monoidal}
Suppose that $\sfC$ and $\sfC'$ are two symmetric monoidal categories satisfying all the assumptions we impose on a symmetric monoidal category, and that $\phi\colon \sfC\to \sfC'$ is a symmetric monoidal functor. Then for each reconnectad $\calF$ in $\sfC$, we obtain a reconnectad $\phi(\calF)$ in $\sfC'$. In particular, for a reconnectad $\calF$ in topological spaces, the graphical collection $H_{\bullet}(\calF,\k)$ is a reconnectad in $\dgVect$.
\end{proposition}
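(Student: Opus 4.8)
The statement is an instance of the general principle that monoids (or algebras over a monad) transport along functors that are compatible with the relevant structure. Concretely, the plan is to observe that the nested set endofunctor $\calN$ is built entirely out of three ingredients: coproducts $\oplus$ indexed by the sets $N^+(\Gamma)$, unordered monoidal products $\bigotimes_{T\in\tau}$, and the reindexing of components via the graph operations $\Gamma\mapsto(\Gamma_T)^*_{T\setminus\lambda(T)}$. The first two are precisely the structure that a symmetric monoidal functor $\phi$ respecting coproducts is required to preserve, and the third is purely combinatorial and does not involve $\sfC$ at all. Hence $\phi$ intertwines the endofunctor $\calN$ on $\GrCol_\sfC$ with the endofunctor $\calN$ on $\GrCol_{\sfC'}$, i.e. there is a natural isomorphism $\calN_{\sfC'}\circ \phi_* \cong \phi_*\circ\calN_{\sfC}$, where $\phi_*\colon \GrCol_\sfC\to\GrCol_{\sfC'}$ denotes postcomposition with $\phi$.

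First I would check that $\phi_*$ is well-defined on graphical collections: since $\phi$ is symmetric monoidal it sends $1_\sfC$ to $1_{\sfC'}$ up to canonical isomorphism, so $\phi_*(\calX)(\emptyset)=\phi(1_\sfC)\cong 1_{\sfC'}$, and $\phi_*(\calX)=\phi\circ\calX$ is still a functor $\CGr\to\sfC'$. Next I would exhibit the comparison natural transformation: applying $\phi$ to the component $\calN(\calX)(\Gamma)=\bigoplus_{\tau\in N^+(\Gamma)}\bigotimes_{T\in\tau}\calX((\Gamma_T)^*_{T\setminus\lambda(T)})$ and using that $\phi$ preserves coproducts together with the monoidal constraints of $\phi$ (and the fact, recalled in Section~\ref{sec:recollections}, that the unordered product $\bigotimes_{i\in I}$ is itself assembled from $\oplus$ and $\otimes$) yields a canonical isomorphism $\phi(\calN_\sfC(\calX)(\Gamma))\cong \calN_{\sfC'}(\phi_*\calX)(\Gamma)$, natural in $\Gamma$ and in $\calX$. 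One should check this isomorphism is compatible with the unit $\calX\to\calN(\calX)$ and the multiplication $\calN(\calN(\calX))\to\calN(\calX)$ of the monad; but these maps were themselves defined using only the inclusion of a coproduct summand and the operadic-style substitution of trees into vertices, both of which are preserved by $\phi$ essentially by construction, so this is a diagram chase with the coherence isomorphisms of $\phi$.

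Given this, the conclusion is formal: if $\calF$ is an $\calN_\sfC$-algebra with structure map $\alpha\colon\calN_\sfC(\calF)\to\calF$, then $\phi_*(\calF)$ carries the structure map obtained as the composite $\calN_{\sfC'}(\phi_*\calF)\xrightarrow{\sim}\phi_*(\calN_\sfC\calF)\xrightarrow{\phi_*(\alpha)}\phi_*(\calF)$, and the monad-algebra axioms for this composite follow from those for $\alpha$ together with the compatibility of the comparison isomorphism with the monad structure verified above. For the final assertion, one applies this with $\sfC$ the category of topological spaces, $\sfC'=\dgVect$, and $\phi=H_\bullet(-,\k)$: singular homology with coefficients in a field is a lax symmetric monoidal functor by the Künneth theorem (an isomorphism over a field, hence strong monoidal), it sends the point to $\k$ concentrated in degree $0$, and it sends disjoint unions to direct sums, so all hypotheses are met.

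The only genuinely non-routine point is the verification that the comparison isomorphism $\phi\circ\calN_\sfC\cong\calN_{\sfC'}\circ\phi_*$ intertwines the monad multiplications, because the multiplication $\calN(\calN(\calX))\to\calN(\calX)$ involves both a nontrivial reindexing on the combinatorial side (substituting a tree $\mathbb{T}_{\tau''}$ or $\mathbb{T}_{\tau'''}$ into a vertex of $\mathbb{T}_{\tau'}$, and checking that the resulting connected graph on the merged vertex labels agrees with $(\Gamma_T)^*_{T\setminus\lambda(T)}$) and a reassociation of iterated monoidal products; matching the coherence data of $\phi$ against this reassociation is the part that requires care, though it is ultimately bookkeeping and contains no real obstruction.
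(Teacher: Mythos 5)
Your proposal is correct and follows the same route the paper has in mind: the paper offers no written argument beyond declaring the statement ``straightforward from the definition,'' and your unpacking --- postcomposition with $\phi$ preserves graphical collections, the comparison isomorphism $\phi\circ\calN_{\sfC}\cong\calN_{\sfC'}\circ\phi_*$ intertwines the monad structures since $\calN$ is built only from coproducts, (unordered) monoidal products and purely combinatorial reindexing, and hence $\calN$-algebras transport --- is exactly the content of that assertion, with the K\"unneth theorem over a field supplying the monoidality of $H_\bullet(-,\k)$ for the final claim. You also correctly flag the one hypothesis used implicitly, namely that $\phi$ preserves the coproducts entering $\calN$, which holds in the intended application.
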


Our definition of a reconnectad is not very easy to unwrap to handle various particular cases. We shall now present two equivalent definitions that will allow us to view reconnectads in a much more concrete way. 

\subsection{The monoidal category of graphical collections}

There is an obvious symmetric monoidal structure on the category of graphical collections called the \emph{Hadamard product}. It is defined by the formula 
 \[
(\calF\underset{H}{\otimes}\calG)(\Gamma):=\calF(\Gamma)\otimes\calG(\Gamma).
 \]
The graphical collection $\mathbb{I}$ with $\mathbb{I}(\Gamma)=1_{\sfC}$, equipped with the trivial action of the group $\Aut(\Gamma)$, is the unit of this monoidal structure. For our purposes, another (highly non-symmetric) monoidal structure on the category of graphical collections will play an important role. It is defined as follows.

\begin{definition}[reconnected product]
The \emph{reconnected product} of two graphical collections $\calF$ and $\calG$ is the graphical collection $\calF\circ_{\rmR}\calG$ defined by the formula
 \[
(\calF\circ_{\rmR}\calG)(\Gamma):=\bigoplus_{V\subset V_\Gamma}\calF(\Gamma^*_V)\otimes\bigotimes_{\Gamma'\in\mathrm{Conn}(\Gamma_V)} \calG(\Gamma').
 \]
\end{definition}

It turns out that the product we defined makes the category of graphical collections into a monoidal category. 

\begin{proposition}\label{prop:assoc}
The category $\GrCol_{\sfC}$ equipped with the reconnected product $\circ_{\rmR}$ is a monoidal category whose unit is the graphical collection $\mathbbold{1}$ defined by 
 \[
\mathbbold{1}(\Gamma)=
\begin{cases}
1_{\sfC}, \,\Gamma=\emptyset,\\
\,\,0, \,\,\,\text{ otherwise.}
\end{cases}
 \]
\end{proposition}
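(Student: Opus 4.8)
The plan is to verify directly that $\circ_{\rmR}$ is a monoidal product by exhibiting the associativity and unit isomorphisms and checking the relevant coherence. First I would unwind the triple product. On the one hand,
\[
((\calF\circ_{\rmR}\calG)\circ_{\rmR}\calH)(\Gamma)=\bigoplus_{W\subset V_\Gamma}\Big((\calF\circ_{\rmR}\calG)(\Gamma^*_W)\otimes\bigotimes_{\Delta\in\mathrm{Conn}(\Gamma_W)}\calH(\Delta)\Big),
\]
and expanding the inner $\circ_{\rmR}$ over subsets $U'\subset V_{\Gamma^*_W}=V_\Gamma\setminus W$ gives a sum indexed by nested pairs $W\subset U$ (writing $U=U'\sqcup W$, so $U'$ is the image of $U$ in $\Gamma^*_W$). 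On the other hand,
\[
(\calF\circ_{\rmR}(\calG\circ_{\rmR}\calH))(\Gamma)=\bigoplus_{U\subset V_\Gamma}\calF(\Gamma^*_U)\otimes\bigotimes_{\Gamma'\in\mathrm{Conn}(\Gamma_U)}(\calG\circ_{\rmR}\calH)(\Gamma'),
\]
and expanding each $(\calG\circ_{\rmR}\calH)(\Gamma')$ over subsets $W_{\Gamma'}\subset V_{\Gamma'}$, then setting $W=\bigsqcup_{\Gamma'}W_{\Gamma'}$, again produces a sum indexed by pairs $W\subset U$. So both sides decompose over the same index set, and I would match summands.

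The key combinatorial facts making this work are: (i) for $W\subset U\subset V_\Gamma$, one has $(\Gamma^*_W)^*_{U\setminus W}\cong\Gamma^*_U$ — reconnecting through $W$ and then through the image of $U$ is the same as reconnecting through $U$ in one step; (ii) the connected components of $(\Gamma^*_W)_{U\setminus W}$ are exactly the reconnected complements $(\Gamma')^*_{W_{\Gamma'}}$ as $\Gamma'$ runs over $\mathrm{Conn}(\Gamma_U)$, where $W_{\Gamma'}=W\cap V_{\Gamma'}$; and (iii) for each such $\Gamma'$, the connected components of $\Gamma'_{W_{\Gamma'}}$ assemble, over all $\Gamma'$, to the full set $\mathrm{Conn}(\Gamma_W)$. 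Granting these, the $\calH$-factors on both sides are indexed by $\mathrm{Conn}(\Gamma_W)$, the $\calG$-factors by $\bigsqcup_{\Gamma'\in\mathrm{Conn}(\Gamma_U)}\mathrm{Conn}((\Gamma')_{W_{\Gamma'}})$ which equals $\mathrm{Conn}((\Gamma^*_W)_{U\setminus W})$ — i.e.\ the $\calG$-inputs of $\calF\circ_{\rmR}\calG$ evaluated on $\Gamma^*_W$ — and the single $\calF$-factor is $\calF(\Gamma^*_U)$ on both sides by (i). The unordered tensor products match because coproducts distribute over $\otimes$ in $\sfC$ and the groupoid actions are compatible; this gives the associator $\alpha$. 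For the unit, note $\Gamma^*_\emptyset=\Gamma$ and $\mathrm{Conn}(\Gamma_\emptyset)=\emptyset$, so the $V=\emptyset$ summand of $\calF\circ_{\rmR}\mathbbold{1}$ is $\calF(\Gamma)\otimes 1_{\sfC}$, while every $V\ne\emptyset$ summand contains a factor $\mathbbold{1}(\Gamma')=0$ for each nonempty component $\Gamma'$ of $\Gamma_V$ (and $\Gamma_V$ has at least one such when $V\ne\emptyset$); hence $\calF\circ_{\rmR}\mathbbold{1}\cong\calF$. Similarly $\mathbbold{1}\circ_{\rmR}\calG$ picks out $V=V_\Gamma$ with $\Gamma^*_{V_\Gamma}=\emptyset$, forcing $\calG$ to be evaluated only on the components of $\Gamma$ itself — so on connected $\Gamma$ it returns $\calG(\Gamma)$, and on $\Gamma=\emptyset$ it returns $1_{\sfC}$; either way $\mathbbold{1}\circ_{\rmR}\calG\cong\calG$ as graphical collections (recalling $\calG(\emptyset)=1_{\sfC}$ by definition).

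Finally I would check the pentagon and triangle axioms. I expect this to be the main obstacle, not because of any subtlety but because of bookkeeping: one must track four-fold nested subsets $W_1\subset W_2\subset W_3\subset V_\Gamma$ and verify that the two ways of reassociating $(((\calF\circ_{\rmR}\calG)\circ_{\rmR}\calH)\circ_{\rmR}\calK)$ agree summand-by-summand, which amounts to the evident transitivity of fact (i) and the compatibility of the decompositions of $\mathrm{Conn}$ under iterated induced subgraphs. The cleanest way to organise all of this — and the route I would actually take in the writeup — is to observe that $(\calF\circ_{\rmR}\calG)(\Gamma)$ is naturally indexed by \emph{one-edge} nested sets and, more conceptually, to identify iterated reconnected products with summands of the nested-set endofunctor $\calN$ (as already foreshadowed after the definition of $\calN$, where the summand $\calX(\Gamma^*_T)\otimes\calX(\Gamma_T)$ was singled out): the associativity of $\circ_{\rmR}$ then becomes the statement that a nested set of nested sets is a nested set, which is exactly the content ensuring $\calN$ is a monad. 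I would present the hands-on summand-matching as the proof and remark that it is precisely the associativity underlying the monad structure on $\calN$.
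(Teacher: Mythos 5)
Your argument is correct and is essentially the paper's own proof: both sides expand the two triple products over nested pairs of subsets $W\subset U\subset V_\Gamma$ and match summands using exactly the identities $(\Gamma^*_W)^*_{U\setminus W}=\Gamma^*_U$, $(\Gamma^*_W)_{U\setminus W}=(\Gamma_U)^*_W$, $(\Gamma_U)_W=\Gamma_W$, and both handle the unit via $\Gamma^*_\emptyset=\Gamma$, $\Gamma_{V_\Gamma}=\Gamma$ (resp.\ $\Gamma_\emptyset=\emptyset$, $\Gamma^*_{V_\Gamma}=\emptyset$), leaving the coherence axioms to direct inspection. The only difference is presentational: the paper compresses the bookkeeping of connected components by extending collections multiplicatively to disconnected graphs, where you track $\mathrm{Conn}(\Gamma_V)$ explicitly.
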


\begin{proof}
Let us allow ourselves to evaluate each graphical collection on not necessarily connected graphs by putting 
 \[
\calG(\Gamma_1\sqcup\Gamma_2):=\calG(\Gamma_1)\otimes\calG(\Gamma_2).
 \]
This permits us to write the definition of the reconnected product in a more compact way
 \[
(\calF\circ_{\rmR}\calG)(\Gamma)=\bigoplus_{V\subset V_\Gamma}\calF(\Gamma^*_V)\otimes\calG(\Gamma_V).
 \]
Notice that this convention does not lead to a contradiction when evaluating the reconnected product on disconnected graphs: we have 
\begin{multline*}
(\calF\circ_{\rmR}\calG)(\Gamma_1\sqcup\Gamma_2)=
(\calF\circ_{\rmR}\calG)(\Gamma_1)\otimes (\calF\circ_{\rmR}\calG)(\Gamma_2)=\\
\bigoplus_{V_1\subset V_{\Gamma_1}}\calF((\Gamma_1)^*_{V_1})\otimes\calG((\Gamma_1)_{V_1})\otimes
\bigoplus_{V_2\subset V_{\Gamma_2}}\calF((\Gamma_2)^*_{V_2})\otimes\calG((\Gamma_2)_{V_2}),
\end{multline*}
which, thanks to the symmetry isomorphisms of $\sfC$, the property $\calG(\emptyset)=1_{\sfC}$, and the properties
\begin{gather*}
(\Gamma_1\sqcup\Gamma_2)^*_{V}=(\Gamma_1)^*_{V\cap V_{\Gamma_1}}\sqcup (\Gamma_2)^*_{V\cap V_{\Gamma_2}},\\
(\Gamma_1\sqcup\Gamma_2)_{V}=(\Gamma_1)_{V\cap V_{\Gamma_1}}\sqcup (\Gamma_2)_{V\cap V_{\Gamma_2}},
\end{gather*}
is isomorphic to 
 \[
\bigoplus_{V\subset V_{\Gamma_1\sqcup\Gamma_2}}\calF((\Gamma_1\sqcup\Gamma_2)^*_{V})\otimes\calG((\Gamma_1\sqcup\Gamma_2)_{V}).
 \] 
Because of that, we have
\begin{multline*}
(\calF\circ_{\rmR}(\calG\circ_{\rmR}\calH))(\Gamma)=
\bigoplus_{V\subset V_\Gamma}\calF(\Gamma^*_V)\otimes(\calG\circ_{\rmR}\calH)(\Gamma_V)\\=
\bigoplus_{V\subset V_\Gamma}\calF(\Gamma^*_V)\otimes\bigoplus_{U\subset V}\calG((\Gamma_V)^*_U)\otimes\calH((\Gamma_V)_U)
\end{multline*}
and
\begin{multline*}
((\calF\circ_{\rmR}\calG)\circ_{\rmR}\calH)(\Gamma)=
\bigoplus_{U\subset V_\Gamma}(\calF\circ_{\rmR}\calG)(\Gamma^*_U)\otimes\calH(\Gamma_U)\\
=\bigoplus_{U\subset V_\Gamma}\bigoplus_{W\subset V_\Gamma\setminus U}
\calF((\Gamma^*_U)^*_W)\otimes\calG((\Gamma^*_U)_W)\otimes \calH(\Gamma_U),
\end{multline*}
which, if we denote $V:=U\sqcup W$, becomes
 \[
\bigoplus_{U\subset V_\Gamma}\bigoplus_{U\subset V\subset V_\Gamma}
\calF((\Gamma^*_U)^*_{V\setminus U})\otimes\calG((\Gamma^*_U)_{V\setminus U})\otimes \calH(\Gamma_U), 
 \]
and the associativity isomorphism follows from the obvious properties 
 \[
(\Gamma^*_U)^*_{V\setminus U}=\Gamma^*_V, \quad (\Gamma^*_U)_{V\setminus U}=(\Gamma_V)^*_U, \quad (\Gamma_V)_U=\Gamma_U
 \]
that hold for any $U\subset V\subset V_\Gamma$. Moreover, the associativity isomorphisms are immediately seen to satisfy the axioms of a monoidal category. Finally, we note that we have $\Gamma_\emptyset=\emptyset$ and $\Gamma^*_\emptyset=\Gamma$ and, dually, we have $\Gamma_{V_\Gamma}=\Gamma$ and $\Gamma^*_{V_\Gamma}=\emptyset$. This immediately implies the isomorphisms 
 \[
\calF\circ_{\rmR}\mathbbold{1}\cong\mathbbold{1}\circ_{\rmR}\calF\cong\calF,
 \] 
and the necessary compatibility of these isomorphisms with the monoidal structure is verified by direct inspection. 
\end{proof}

This result ensures that the following definition of a reconnectad makes sense; it is straightforward to see that it is equivalent to the monadic definition.

\begin{definition}[reconnectad, monoidal definition]
A \emph{reconnectad} is a monoid in the monoidal category of graphical collections equipped with the reconnected product $\circ_{\rmR}$.
\end{definition}

For a reconnectad $\calF$, a connected graph $\Gamma$, and a subset $V$ of $V_\Gamma$, we shall denote by $\mu_V^\Gamma$ the restriction of the structure map $(\calF\circ_{\rmR}\calF)(\Gamma)\to \calF(\Gamma)$ to the summand $\calF(\Gamma^*_V)\otimes\bigotimes_{\Gamma'\in\mathrm{Conn}(\Gamma_V)} \calF(\Gamma')$. In the particular case when $V_\Gamma\ne T\in G_\Gamma$, we shall use the notation $\circ_T^\Gamma$ for $\mu_T^\Gamma$; this distinction should help the reader to navigate between the connected and the disconnected situation. Additionally, when $V=\{v\}$, we write $\circ_v^\Gamma$ instead of $\circ_{\{v\}}^\Gamma$ to simplify the notation slightly. 

One immediate consequence of the monoidal definition is that whenever one can talk about monoids, one can also talk about comonoids, and so the notion of a \emph{coreconnectad} arises naturally. We shall denote by $\Delta_V^\Gamma$ the composition of the structure map $\calG(\Gamma)\to (\calG\circ_{\rmR}\calG)(\Gamma)$ of a coreconnectad $\calG$ with the projection onto the summand $\calG(\Gamma^*_V)\otimes\bigotimes_{\Gamma'\in\mathrm{Conn}(\Gamma_V)} \calG(\Gamma')$ of $(\calG\circ_{\rmR}\calG)(\Gamma)$.

\subsection{The coloured operad encoding reconnectads}

In the case of operads, on can use the operations $\circ_i$, often referred to as infinitesimal compositions, or partial compositions, to give an equivalent definition. Even though this viewpoint somewhat obscures the fact that operads are associative monoids, it allows one to view operads as algebras over a coloured operad, which has its advantages.

The following proposition is proved by direct inspection (using the properties of restrictions and reconnected complements used in the proof of Proposition \ref{prop:assoc}).

\begin{proposition}\label{prop:infini}
The datum of a reconnectad on a graphical collection $\calF$ is equivalent to the datum of infinitesimal compositions
 \[
\circ^\Gamma_T\colon\calF(\Gamma^*_T)\otimes\calF(\Gamma_T)\to\calF(\Gamma)
 \]
for all $V_\Gamma\ne T\in G_\Gamma$; these operations must satisfy the following properties:
\begin{itemize}
    \item (unit axiom) Under the identifications 
     \[
\Gamma^*_{\emptyset}=\Gamma,\quad \Gamma^*_{V_\Gamma}=\emptyset,\quad 1_{\sfC}\otimes X\cong X\otimes1_{\sfC}\cong X, 
     \]
    we have $\circ^\Gamma_\emptyset=\circ^\Gamma_{V_\Gamma}=\mathrm{id}_{\calF(\Gamma)}$.
    \item (parallel axiom) For all $T_1,T_2\in G_\Gamma$ such that $\Gamma_{T_1\cup T_2}=\Gamma_{T_1}\sqcup \Gamma_{T_2}$, the diagram
\[\begin{tikzcd}
\calF(\Gamma^*_{T_1\cup T_2}) \otimes \calF(\Gamma_{T_1}) \otimes \calF(\Gamma_{T_2}) \arrow{r}{\circ_{T_1}^{\Gamma^*_{T_2}}} \arrow[swap]{d}{\circ_{T_2}^{\Gamma^*_{T_1}}} & \calF(\Gamma^*_{T_2}) \otimes \calF(\Gamma_{T_2}) \arrow{d}{\circ_{T_2}^{\Gamma}} \\
\calF(\Gamma^*_{T_1}) \otimes \calF(\Gamma_{T_1})\arrow{r}{\circ_{T_1}^{\Gamma}} & \calF(\Gamma)
\end{tikzcd}
\]
    commutes. 
    \item (consecutive axiom) For all $T_1,T_2\in G_\Gamma$ with $T_1\subset T_2$, the diagram
\[\begin{tikzcd}
\calF(\Gamma^*_{T_2})\otimes \calF((\Gamma_{T_2})^*_{T_1}) \otimes \calF(\Gamma_{T_1}) \arrow{r}{\mathrm{id} \otimes \circ_{T_1}^{\Gamma_{T_2}}} \arrow[swap]{d}{\circ_{T_2\setminus T_1}^{\Gamma^*_{T_1}}\otimes \mathrm{id}} & \calF(\Gamma^*_{T_2}) \otimes\calF(\Gamma_{T_2}) \arrow{d}{\circ_H^{\Gamma}} \\
\calF(\Gamma^*_{T_1}) \otimes \calF(\Gamma_{T_1})  \arrow{r}{\circ_{T_1}^{\Gamma}} & \calF(\Gamma)
\end{tikzcd}
\]
    commutes.
    \item(equivariance) For every $T\in G_\Gamma$ and every automorphism $\alpha \in \mathrm{Aut}(\Gamma)$ the diagram 
\[\begin{tikzcd}
\calF(\Gamma^*_{T}) \otimes \calF(\Gamma_{T}) \arrow{r}{\circ_T^{\Gamma}} \arrow[swap]{d}{\phi} & \calF(\Gamma) \arrow{d}{\phi} \\
\calF(\Gamma^*_{\alpha(T)}) \otimes \calF(\Gamma_{\alpha(T)}) \arrow{r}{\circ_{\alpha(T)}^{\Gamma}} & \calF(\Gamma)
\end{tikzcd}
\]
    commutes.
\end{itemize}
\end{proposition}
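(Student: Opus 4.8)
The plan is to prove the equivalence between the monoidal definition of a reconnectad, namely a monoid $(\calF,\gamma,\eta)$ in $(\GrCol_{\sfC},\circ_{\rmR})$, and the datum of operations $\{\circ^\Gamma_T : T\in G_\Gamma\}$ subject to the four listed axioms (the equivalence of the monoidal and monadic definitions having already been recorded). Throughout I would use freely the identities for induced subgraphs, connected components, and reconnected complements collected in the proof of Proposition~\ref{prop:assoc}, together with the elementary observation that if $T_1,T_2\subseteq V_\Gamma$ are disjoint and no edge of $\Gamma$ joins them --- for instance when they are the vertex sets of two distinct connected components of some induced subgraph $\Gamma_V$ --- then $(\Gamma^*_{T_1})_{T_2}=\Gamma_{T_2}$, one has $(\Gamma^*_{T_1})^*_{T_2}=\Gamma^*_{T_1\cup T_2}$, and no edge of $\Gamma^*_{T_1}$ joins $T_2$ to a third such set.

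In the direction from a monoid to operations, given $(\calF,\gamma,\eta)$ and $T\in G_\Gamma$, the graph $\Gamma_T$ is connected, so the unordered tensor over $\mathrm{Conn}(\Gamma_T)$ in the $V=T$ summand of $(\calF\circ_{\rmR}\calF)(\Gamma)$ is simply $\calF(\Gamma_T)$; I define $\circ^\Gamma_T$ to be the composite of the inclusion of this summand with $\gamma_\Gamma$. Equivariance is then immediate, because $\gamma$ is a morphism of graphical collections and an automorphism $\alpha$ of $\Gamma$ carries the $V=T$ summand onto the $V=\alpha(T)$ summand. For the unit axiom, the right unit law of the monoid, combined with the fact that $\mathbbold{1}$ is supported on the empty graph, forces $\gamma_\Gamma$ to be the identity on the $V=\emptyset$ summand, while the left unit law, combined with $\mathbbold{1}(\Gamma^*_V)=0$ for $V\neq V_\Gamma$, forces $\circ^\Gamma_{V_\Gamma}=\mathrm{id}$. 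The parallel and consecutive axioms are extracted from the associativity constraint for $\gamma$, evaluated on $\Gamma$, by restricting it to suitable summands of the threefold product $(\calF\circ_{\rmR}\calF\circ_{\rmR}\calF)(\Gamma)$: the summand whose outer decomposition selects a disjoint $T_1\sqcup T_2$ and then splits it into $T_1$ and $T_2$ in the two possible orders yields the parallel axiom, and the summand whose outer decomposition is at $T_2\supseteq T_1$ and whose inner decomposition is at $T_1$, taken in the two associations, yields the consecutive axiom (the arrow labelled $\circ^\Gamma_H$ there being $\circ^\Gamma_{T_2}$).

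In the converse direction, starting from operations $\{\circ^\Gamma_T\}$ satisfying the axioms, I would first extend them to composite maps
\[
\mu^\Gamma_V\colon\ \calF(\Gamma^*_V)\otimes\!\!\bigotimes_{\Gamma'\in\mathrm{Conn}(\Gamma_V)}\!\!\calF(\Gamma')\ \longrightarrow\ \calF(\Gamma),\qquad V\subseteq V_\Gamma,
\]
defined by recursion on the number of connected components of $\Gamma_V$: set $\mu^\Gamma_\emptyset=\mathrm{id}$, and if the components of $\Gamma_V$ have vertex sets $T_1,\dots,T_k$, put $\mu^\Gamma_V=\circ^\Gamma_{T_1}\circ(\mu^{\Gamma^*_{T_1}}_{V\setminus T_1}\otimes\mathrm{id}_{\calF(\Gamma_{T_1})})$, which is well-typed because $(\Gamma^*_{T_1})^*_{V\setminus T_1}=\Gamma^*_V$ and the components of $(\Gamma^*_{T_1})_{V\setminus T_1}$ are $\Gamma_{T_2},\dots,\Gamma_{T_k}$. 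Independence of this expression from the chosen order of the $T_i$ follows by induction: transposing the first two sets peeled off amounts, after factoring out the common tail $\mu^{\Gamma^*_{T_1\cup T_2}}_{V\setminus(T_1\cup T_2)}$, to precisely the parallel axiom for $\Gamma$, while transpositions further down are covered by the inductive hypothesis. The family $\{\mu^\Gamma_V\}_{V\subseteq V_\Gamma}$ then assembles into a map $\gamma\colon\calF\circ_{\rmR}\calF\to\calF$, which is a morphism of graphical collections by the equivariance axiom; taking $\eta\colon\mathbbold{1}\to\calF$ to be the canonical identification on the empty graph, the right unit law of the monoid is the normalisation $\mu^\Gamma_\emptyset=\mathrm{id}$ and the left unit law is the identity $\circ^\Gamma_{V_\Gamma}=\mathrm{id}$.

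The step I expect to be the main obstacle is the associativity of the reconstructed $\gamma$. Using the identities $(\Gamma^*_U)^*_{V\setminus U}=\Gamma^*_V$, $(\Gamma^*_U)_{V\setminus U}=(\Gamma_V)^*_U$ and $(\Gamma_V)_U=\Gamma_U$ for $U\subseteq V\subseteq V_\Gamma$, associativity unwinds into the family of identities expressing that, for nested $U\subseteq V$, the two evident composites of $\mu$'s coincide. I would prove these by a double induction on the numbers of connected components of $\Gamma_V$ and of $\Gamma_U$: order-independence of $\mu$ together with the parallel axiom allows one to peel off and reshuffle components so as to reduce to the case where both $\Gamma_V$ and $\Gamma_U$ are connected, and there the required identity is exactly the consecutive axiom (the cases $U=\emptyset$ and $U=V$ being immediate from the unit normalisations). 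Finally the two constructions are mutually inverse: rebuilding $\gamma$ from the extracted $\circ^\Gamma_T$ returns the original $\gamma$, since in any monoid the restrictions $\mu^\Gamma_V$ of $\gamma$ obey the same recursion (by associativity of $\gamma$); and re-extracting the operations from a $\gamma$ built out of given operations returns those operations, since $\mu^\Gamma_T=\circ^\Gamma_T$ whenever $T$ is connected. This is the ``direct inspection'' referred to in the statement.
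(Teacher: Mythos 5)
Your proposal is correct and is exactly the ``direct inspection'' the paper invokes: the paper proves this proposition in one line by appealing to the identities $(\Gamma^*_U)^*_{V\setminus U}=\Gamma^*_V$, $(\Gamma^*_U)_{V\setminus U}=(\Gamma_V)^*_U$, $(\Gamma_V)_U=\Gamma_U$ from the proof of Proposition~\ref{prop:assoc}, and your write-up simply carries out that translation between the monoid structure map $\gamma$ and the partial compositions $\circ^\Gamma_T$ (extracting the axioms from the unit and associativity laws restricted to summands, and conversely rebuilding $\mu^\Gamma_V$ by peeling off connected components, with the parallel axiom giving order-independence and the consecutive axiom giving associativity). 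No substantive difference from the paper's intended argument.
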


This proposition immediately implies that the maps $\psi_{\Gamma,T}$ defined by Formula~\eqref{eq:composition-toric} give the collection of all toric varieties of graph associahedra the structure of a reconnectad, thus introducing the central example of a reconnectad that motivated our work.

\begin{definition}[wonderful reconnectad]
The \emph{wonderful reconnectad} is the graphical collection $\calW$ with
 \[
\calW(\Gamma):=X(\Gamma)
 \]
and with the structure operations 
 \[
\circ^\Gamma_T\colon\calW(\Gamma^*_T)\otimes\calW(\Gamma_T)\to\calW(\Gamma)
 \]
given by $\circ^\Gamma_T:=\psi_{\Gamma,T}$. 
\end{definition}

We shall use Proposition \ref{prop:infini} in conjunction with the notion of a groupoid coloured operad of Petersen \cite{MR3134040} as follows, mimicking the approach to modular operads of Ward \cite{MR4425832}, see also \cite{DSVV}. 

\begin{definition}\label{def:groupoidcoloured}
We define the $\CGr$-coloured operad $\Rec=\calT(E)/(R)$ as follows. It is generated by the elements 
\[
 E\big(\Gamma; \Gamma^*_T, \Gamma_T\big) :=\\ 
 \left\{
\Aut(\Gamma^*_T)\times \Aut(\Gamma_T)\times
   \begin{aligned}\begin{tikzpicture}[optree]
    \node{}
      child { node[circ]{$\circ^\Gamma_T$}
        child { edge from parent node[left,near end]{\tiny$\Gamma^*_T$} }
        child { edge from parent node[right,near end]{\tiny$\Gamma_T$} }
        edge from parent node[right,near start]{\tiny$\phantom{i}\Gamma$}
         } ;
   \end{tikzpicture}\end{aligned} \right\}\ ,  V_\Gamma\ne T\in G_\Gamma,
\]
with the regular $\Aut(\Gamma^*_V)\times \Aut(\Gamma_V)$-action and with the $\Aut(\Gamma)$-action given by
\[\left(\circ_V^\Gamma,\mathrm{id},\mathrm{id}\right)^{\phi}=\left(\circ_{\phi(V)}^\Gamma,\phi|_{\Gamma^*_V},\phi|_{\Gamma_V}\right)\ .\]
   The quadratic relations $R$ are the ones given in Proposition \ref{prop:infini}.
\end{definition}

We are now in the position to give another equivalent definition of a reconnectad.

\begin{definition}
A reconnectad is an algebra over the $\CGr$-coloured operad $\Rec$.
\end{definition}

One immediate consequence of this definition is that all the standard constructions for algebras over operads are available for reconnectads. It is also useful to note that the $\CGr$-coloured operad $\Rec$ has an obvious diagonal making it a Hopf $\CGr$-coloured operad. In particular, the category of reconnectad has a symmetric monoidal structure: that structure is the Hadamard product equipped with the obvious composition maps.

\begin{remark}\label{rem:Feynman}
Expressing certain operadic structures as algebras over groupoid coloured operads is essentially equivalent to talking about operads over a Feynman category \cite{MR3636409}. Let $\Gr$ denote the category whose objects are simple (not necessarily connected) graphs and whose morphisms are generated by graph isomorphisms and the morphisms
 \[
\phi^\Gamma_V \colon \Gamma_V\sqcup\Gamma^*_V  \to \Gamma
 \]
that are associated to the datum of a graph $\Gamma$ and a choice of a subset $V\subset V_\Gamma$. The groupoid of connected graphs $\CGr$ is a full subcategory of $\Gr$. By a direct inspection, the triple $(\CGr,\Gr,\imath)$, where $\imath$ is the inclusion $\CGr\to\Gr$ defines the datum of a Feynman category, and reconnectads are operads over this Feynman category.  
\end{remark}

\subsection{Particular types of graphs}\label{sec:part-types}

If we restrict ourselves to various families of graphs, we may recognize known algebraic structures in the guise of reconnectads. 

Recall that a twisted associative algebra is a symmetric collection (a functor from the groupoid of finite sets to $\sfC$) which is a monoid with respect to the Cauchy monoidal structure 
 \[
(\calF\cdot\calG)(I):=\bigoplus_{I=J\sqcup K}\calF(J)\otimes\calG(K). 
 \]
on symmetric collections. A twisted associative algebra $\calA$ is said to be connected if $\calA(\emptyset)=1_{\sfC}$.

\begin{proposition}\label{prop:LM}
Suppose that we restrict ourselves to the full subcategory of collections supported on complete graphs. The datum of a reconnectad in that category is the same as the datum of a connected twisted associative algebra.
\end{proposition}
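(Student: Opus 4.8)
The plan is to exhibit a pair of mutually inverse equivalences between the full subcategory of $\GrCol_{\sfC}$ supported on complete graphs (equipped with $\circ_{\rmR}$) and the category of connected twisted associative algebras (equipped with the Cauchy product $\cdot$), and to check that these equivalences are monoidal, so that monoids correspond to monoids. First I would observe that a complete graph $K_I$ is determined by its vertex set $I$, and that $\Aut(K_I)$ is the full symmetric group on $I$; hence a graphical collection supported on complete graphs is exactly the data of a functor from the groupoid of finite sets to $\sfC$, i.e.\ a symmetric collection, and the normalisation $\calF(\emptyset) = 1_{\sfC}$ matches the connectedness condition $\calA(\emptyset) = 1_{\sfC}$. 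So on underlying objects the two categories are literally the same.

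The heart of the matter is the comparison of the two monoidal structures. Here the key combinatorial fact is that for a complete graph $K_I$ and any subset $V \subset I$, the induced subgraph $(K_I)_V$ is the complete graph $K_V$, which is connected, so $\mathrm{Conn}((K_I)_V)$ is a single graph; and the reconnected complement $(K_I)^*_V$ is the complete graph $K_{I \setminus V}$ (any two vertices of $I \setminus V$ are already joined by an edge, so no new edges are added and the result stays inside the family of complete graphs). Substituting this into the defining formula
\[
(\calF \circ_{\rmR} \calG)(\Gamma) = \bigoplus_{V \subset V_\Gamma} \calF(\Gamma^*_V) \otimes \bigotimes_{\Gamma' \in \mathrm{Conn}(\Gamma_V)} \calG(\Gamma')
\]
evaluated at $\Gamma = K_I$ gives
\[
(\calF \circ_{\rmR} \calG)(K_I) = \bigoplus_{V \subset I} \calF(K_{I \setminus V}) \otimes \calG(K_V),
\]
which under the identification above is precisely $(\calF \cdot \calG)(I) = \bigoplus_{I = J \sqcup K} \calF(J) \otimes \calG(K)$. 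One then checks that the units agree: the reconnectad unit $\mathbbold{1}$ is supported on $\emptyset$ with value $1_{\sfC}$, which corresponds to the unit of the Cauchy product. I would also verify that these identifications are natural in $\calF$ and $\calG$ and compatible with the associativity and unit constraints — this is routine because both sides are built from the same coproduct-indexing over set partitions, and the associativity isomorphism of $\circ_{\rmR}$ established in Proposition \ref{prop:assoc} restricts, via the identities $(\Gamma^*_U)^*_{V \setminus U} = \Gamma^*_V$ etc., to exactly the associativity isomorphism of the Cauchy product over ordered triple partitions $I = I_1 \sqcup I_2 \sqcup I_3$.

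Having a monoidal equivalence of categories, it is formal that it induces an equivalence between the categories of monoids on each side: a reconnectad structure $\calF \circ_{\rmR} \calF \to \calF$ supported on complete graphs is carried to a map $\calA \cdot \calA \to \calA$ satisfying associativity and unitality, i.e.\ a connected twisted associative algebra structure, and conversely; morphisms correspond under the equivalence as well. I do not expect any serious obstacle here; the only point demanding a little care is bookkeeping the symmetric group actions — one must make sure that the $\Aut(K_I)$-equivariance built into the definition of $\GrCol_{\sfC}$ and of the reconnected product matches the $S_I$-equivariance implicit in the groupoid-of-finite-sets formulation of twisted associative algebras, and in particular that the unordered tensor product $\bigotimes_{\Gamma' \in \mathrm{Conn}(\Gamma_V)}$ collapsing to a single factor $\calG(K_V)$ is compatible with the action. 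Since $(K_I)_V$ is always connected this collapse is automatic, so the main (very mild) obstacle is simply to present the dictionary between the two languages cleanly; alternatively, one can avoid even that by invoking Proposition \ref{prop:infini} and matching the infinitesimal compositions $\circ^\Gamma_T$ for $T \in G_{K_I}$ (i.e.\ for all nonempty $T \subset I$, since every induced subgraph of a complete graph is connected) directly with the two ``half'' multiplications of a twisted associative algebra, but the monoidal-equivalence route is cleaner and I would prefer it.
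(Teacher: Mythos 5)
Your proposal is correct and follows essentially the same route as the paper: identify complete-graph-supported graphical collections with symmetric collections, observe that induced subgraphs and reconnected complements of complete graphs are again complete (and connected), so the reconnected product restricts to the Cauchy product, and conclude that monoids correspond to monoids. The extra coherence checks you spell out are exactly the routine verifications the paper leaves implicit.
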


\begin{proof}
Note that the datum of a graphical collection supported on complete graphs is obviously the same as the datum of a symmetric collection whose evaluation on the empty set is $1_{\sfC}$: indeed, a complete graph carries as much information as its set of vertices. Moreover, for a complete graph $\Gamma$ and every $V\subset V_\Gamma$, the graph $\Gamma_V$ is connected and complete, and the graph $\Gamma^*_V$ is also complete, so the monoidal structure on our category corresponds precisely to the Cauchy monoidal structure. 
\end{proof}

Recall that a nonsymmetric operad is a nonsymmetric collection (a functor from the groupoid of finite totally ordered sets to $\sfC$) which is a monoid with respect to the composition product 
 \[
(\calF\circ\calG)(I):=\bigoplus_{k\ge 0}\bigoplus_{I=I_1+\cdots+I_k}\calF(\{1,\ldots,k\})\otimes\calG(I_1)\otimes\cdots\otimes G(I_k). 
 \]
on nonsymmetric collections. A nonsymmetric operad $\calO$ is said to be reduced if $\calO=0$ and connected if $\calO(\{\mathrm{pt}\})=1_{\sfC}$. Let us call a nonsymmetric operad \emph{mirrored} if it comes from a functor from the groupoid quotient by the $\mathbb{Z}/2\mathbb{Z}$-action reversing the order. Components of such an operad have actions of $\mathbb{Z}/2\mathbb{Z}$ for which for all elements $f$ and $g$ of arities $p$ and $q$ respectively, we have
 \[
\sigma(f\circ_i g)=\sigma(f)\circ_{p-i+1}\sigma(g). 
 \]

\begin{proposition}\label{prop:NS}
Suppose that we restrict ourselves to the full subcategory of collections supported on path graphs (that is, on the type $A$ Dynkin diagrams). The datum of a reconnectad in that category is the same as the datum of a connected reduced mirrored nonsymmetric operad.
\end{proposition}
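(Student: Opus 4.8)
The plan is to mimic the proof of Proposition \ref{prop:LM}, adapting the identification of the monoidal structures. First I would observe that the datum of a graphical collection supported on path graphs is the same as the datum of a collection indexed by finite totally ordered sets, modulo the order-reversing $\mathbb{Z}/2\mathbb{Z}$-action: a path graph $P_n$ carries exactly the data of a linear order on $\{1,\dots,n\}$ up to reversal, and a graph isomorphism between two path graphs is either the unique order-preserving bijection or the unique order-reversing one. Since $\CGr$-collections only record isomorphisms, $\calF(P_n)$ is a vector space (or object of $\sfC$) equipped with a $\mathbb{Z}/2\mathbb{Z}$-action, precisely the data of a mirrored nonsymmetric collection; the condition $\calF(\emptyset)=1_{\sfC}$ together with the fact that reconnectads by definition have no $P_0$-component that is not forced (and the observation that path graphs have no empty component issue) gives the ``reduced'' hypothesis, while $\calF(P_1)=1_{\sfC}$ follows from the unit axiom together with $\calF(\emptyset)=1_\sfC$, giving ``connected''. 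Actually the reducedness here should be read as: $P_0=\emptyset$ is sent to the unit, there is nothing in ``arity $0$'' beyond that — I would phrase this carefully to match the paper's conventions.

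The heart of the argument is to match the reconnected product $\circ_\rmR$ with the nonsymmetric composition product $\circ$ on the subcategory of path-supported collections. The key combinatorial fact is: if $\Gamma=P_n$ and $V\subset V_{P_n}=\{1,\dots,n\}$, then the induced subgraph $(P_n)_V$ is a disjoint union of path graphs — one for each maximal run of consecutive integers in $V$ — and the reconnected complement $(P_n)^*_V$ is again a path graph, on the vertex set $\{1,\dots,n\}\setminus V$ with its inherited order (deleting an interior block of $V$ and joining its two neighbours produces exactly the path on the remaining vertices). Thus the summand of $(\calF\circ_\rmR\calG)(P_n)$ indexed by $V$ is $\calF(P_{n-|V|})\otimes\bigotimes_{\text{runs}}\calG(P_{\text{run length}})$, and summing over all $V$ — equivalently over all ways of writing $\{1,\dots,n\}$ as an ordered concatenation of intervals, some of which are ``kept'' (the complement) and the rest grouped into consecutive blocks — reproduces exactly the decomposition defining $(\calF\circ\calG)(\{1,\dots,n\})$, except that the blocks in the reconnected picture are unordered among themselves whereas in the nonsymmetric composition the slots $I_1,\dots,I_k$ are ordered. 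Here is where one must be careful: in $\circ_\rmR$ the outer input is $\calF$ on the reconnected complement, which \emph{remembers the relative order of the surviving vertices}, so the blocks of $V$ actually \emph{are} linearly ordered by where they sit relative to the surviving vertices — there is no ambiguity — and the match with $\circ$ is exact. The $\mathbb{Z}/2\mathbb{Z}$-equivariance of all structure maps under the order-reversing automorphism of $P_n$ is exactly the mirrored condition $\sigma(f\circ_i g)=\sigma(f)\circ_{p-i+1}\sigma(g)$.

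I would then verify that under this dictionary the monoidal unit $\mathbbold 1$ of $\GrCol_\sfC$ corresponds to the unit of the composition product on nonsymmetric collections, and that the associativity isomorphism of $\circ_\rmR$ established in Proposition \ref{prop:assoc} restricts to the associativity of $\circ$ — both follow from the identities $(\Gamma^*_U)^*_{V\setminus U}=\Gamma^*_V$ etc., which on path graphs are transparent. Consequently a monoid for $\circ_\rmR$ supported on path graphs is the same as a monoid for $\circ$ with a compatible mirror structure, i.e. a connected reduced mirrored nonsymmetric operad, and conversely.

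The main obstacle, and the step deserving the most care, is the bookkeeping in the previous paragraph: making precise that summing over subsets $V$ with the reconnected complement bookkeeping genuinely reproduces the ordered sum $\bigoplus_{I=I_1+\cdots+I_k}$ — in particular that the ``$k=0$'' term, the blocks of size one, and the unordered tensor product $\bigotimes_{\Gamma'\in\mathrm{Conn}}$ all line up with the nonsymmetric conventions, and that the $\mathbb{Z}/2\mathbb{Z}$-action is transported correctly (the order-reversal on $P_n$ simultaneously reverses the order of the blocks and the internal order of each block and of the complement, which is exactly $\sigma(f\circ_i g)=\sigma(f)\circ_{p-i+1}\sigma(g)$ applied iteratively). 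Everything else is a routine unwinding of Proposition \ref{prop:infini} or Proposition \ref{prop:assoc} restricted to path graphs.
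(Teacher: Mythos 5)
Your overall strategy (restrict to path graphs and match the two monoidal products) has the right shape, but the dictionary you use between path graphs and nonsymmetric collections is wrong, and that dictionary is precisely where the content of the proposition lies. You set $\calO(I):=\calF(P_{|I|})$, identifying the totally ordered set with the vertex set of the path. Under that identification the reconnectadic partial composition $\circ^{P_n}_T\colon \calF(P_{n-|T|})\otimes\calF(P_{|T|})\to\calF(P_n)$ reads $\calO(n-t)\otimes\calO(t)\to\calO(n)$: arities add up exactly, whereas operadic partial composition has arities adding up minus one. Equivalently, in $(\calF\circ_{\rmR}\calG)(P_n)$ the outer factor is $\calF$ evaluated on the surviving vertices, while in the nonsymmetric composite product $(\calF\circ\calG)(I)$ the outer arity is the number of blocks and every element of $I$ lies in some block. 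Already for $P_2$ the two objects are visibly different functors of $(\calF,\calG)$: the reconnected product gives $\calF(P_2)$, two copies of $\calF(P_1)\otimes\calG(P_1)$, and $\calG(P_2)$, whereas the composite product at a two-element set gives $\calF(2)\otimes\calG(1)^{\otimes 2}$ and $\calF(1)\otimes\calG(2)$; no bookkeeping about the ordering of blocks can make your claimed match ``exact''. The same mis-identification corrupts the boundary cases: the unit axiom does not force $\calF(P_1)=1_{\sfC}$ (for instance $\grGerst(P_1)=H_\bullet(S^1,\k)$ is two-dimensional and $\grCom_X(P_1)=X$), so your derivation of ``connected'' fails, and your reading of ``reduced'' does not come out either; if your dictionary were correct, the proposition itself would be false for these reconnectads.

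The paper's proof fixes exactly this point: the correct dictionary sends a nonempty totally ordered set $I$ to the path graph $\Gamma_I$ on the set of gaps $\mathrm{Gap}(I)$, so $P_n$ corresponds to operations of arity $n+1$, i.e.\ $\calO(I):=\calF(\Gamma_I)$ with $|V_{\Gamma_I}|=|I|-1$. A decomposition $I=I_1+\cdots+I_k$ into consecutive nonempty blocks then corresponds bijectively to the subset $V\subset\mathrm{Gap}(I)$ of gaps internal to the blocks: the connected components of $(\Gamma_I)_V$ are the gap-paths of the blocks $I_j$ (blocks of size one contribute nothing, consistently with $\calF(\emptyset)=1_{\sfC}$), and the reconnected complement $(\Gamma_I)^*_V$ is the path on the $k-1$ breakpoint gaps, i.e.\ $\Gamma_{\{1,\dots,k\}}$. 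This is the bijection under which $\circ_{\rmR}$ becomes literally the nonsymmetric composite product, and your (correct) observation about order-reversing automorphisms then yields the mirrored structure. Under this dictionary, connectedness is $\calO(\{\mathrm{pt}\})=\calF(\emptyset)=1_{\sfC}$ (a one-element set has no gaps), reducedness records that no graph corresponds to arity zero, and $\calF(P_1)$ is the space of binary operations, unconstrained, as it must be.
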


\begin{proof}
 Let us consider the assignment to each nonempty finite totally ordered set $I$ the set of \emph{gaps} 
 \[
\mathrm{Gap}(I)=\{(i_1,i_2)\colon i_1,i_2\in I, \{i\in I \colon i_1<i<i_2\}=\emptyset\}.
 \]
The identification of reconnectads supported on path graphs with nonsymmetric operads is conveniently described using gaps: if $\calF$ is a reconnectad, we may define a nonsymmetric collection $\calO$ supported on non-empty finite totally ordered sets by the rule
 \[
\calO(I):=\calF(\Gamma_I),
 \]
where the path graph $\Gamma_I$ has the vertex set $\mathrm{Gap}(I)$, and two pairs are connected by an edge if they share a vertex. If $\calO$ comes from a functor from the groupoid quotient by the $\mathbb{Z}/2\mathbb{Z}$-action reversing the order, this rule really defines a graphical collection compatible with the automorphisms of path graphs. Noting that for a path graph $\Gamma$ and every $V\subset V_\Gamma$ for which the graph $\Gamma_V$ is connected, the graph $\Gamma_V$ is also a path graph, and the graph $\Gamma^*_V$ is also a path graph, we see  by direct inspection that under our identification the composite product of nonsymmetric collections corresponds precisely to the reconnected product of graphical collections. 
\end{proof}

The notion of a twisted associative algebra, as explained in \cite{MR3642294}, is a symmetric version of the notion of a permutad of Loday and Ronco \cite{MR2995045} who in turn refer to permutads as a ``noncommutative version of nonsymmetric operads''. We now see that the universe of reconnectads is where the two notions meet in a meaningful way.

It would be interesting to consider the algebraic structures corresponding to other families of graphs that are closed under the operations $\Gamma_V$ and $\Gamma^*_V$. Two very interesting examples are the family including all complete graphs and all stellar graphs (see Figure~\ref{fig:stellar}), which is already featured in a prominent way in \cite{Stellar}, and the family including all path graphs and all cycle graphs, which should be examined in the context of the notion of a noncommutative cohomological field theory \cite{MR4072173}.

\begin{figure}[h]
  \centering
  \includegraphics[width=0.7\linewidth]{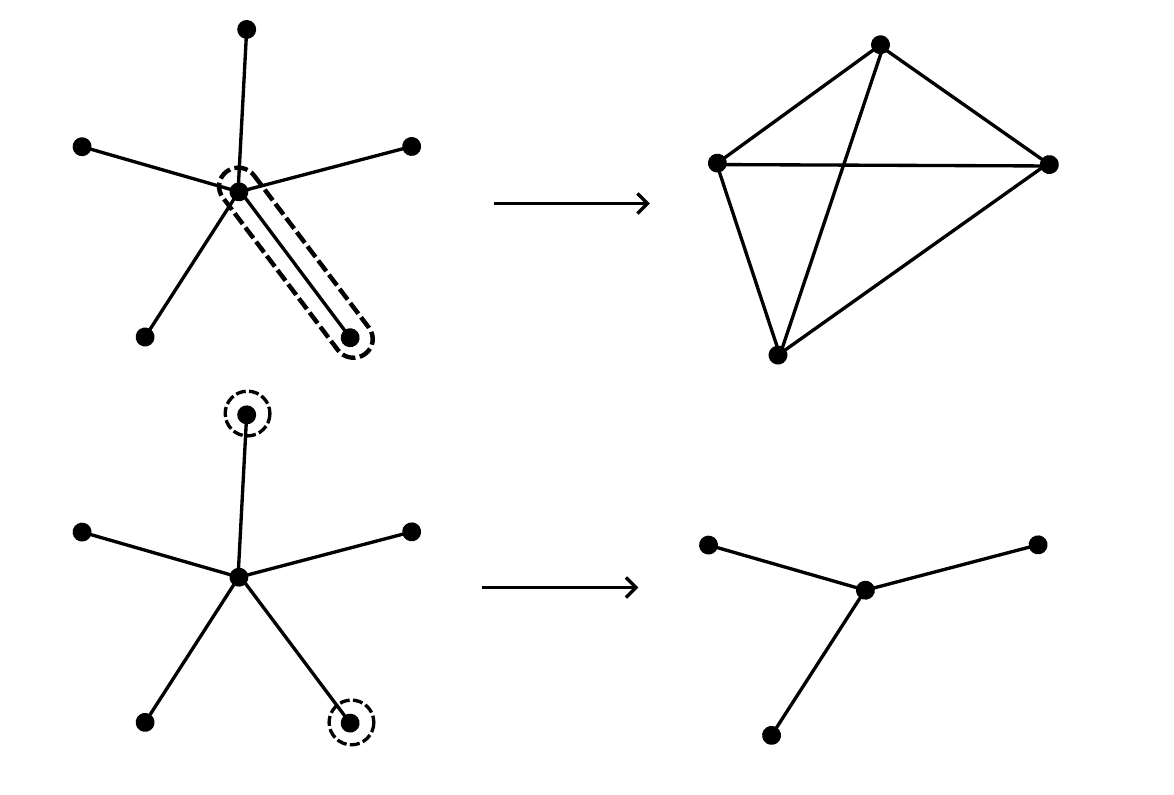}
  \caption{Reconnected complements of a stellar graph}
  \label{fig:stellar}
\end{figure}

\begin{remark}
Reconnectads themselves can be viewed as a restriction of a much more general formalism of operads over a Feynman category of built lattices developed in the recent work of B.~Coron \cite{CoronLattices} for all geometric lattices and their building sets. However, some pleasant geometric aspects of the context we work in, particularly the underlying toric geometry, are not available in that generality. 
\end{remark}

\subsection{The commutative reconnectad}

The simplest possible example of a reconnectad is the terminal reconnectad in the category of sets, which we shall refer to as the \emph{commutative reconnectad} because of its similarity with the operad of commutative associative algebras. Following the notational pattern chosen in \cite{DST}, we denote this reconnectad by $\grCom$, intenting the letters `g' and `r' to remind the reader of the words ``graphs'' and ``reconnected''. This reconnectad has the components 
 \[
\grCom(\Gamma)=1_{\sfC}, 
 \]
and each structure operation arises from the monoid unit property for $1_{\sfC}$. The reconnectad axioms are trivially satisfied. We already saw this graphical collection as the unit $\mathbb{I}$ of the Hadamard product.

The construction of the reconnectad $\grCom$ can be naturally generalised as follows.

\begin{definition}
Let $X$ be an object of the category $\sfC$. We define the \emph{commutative reconnectad of $X$} , denoted by $\grCom_X$, by putting 
 \[
\grCom_X(\Gamma)=\bigotimes_{v\in V_\Gamma} X, 
 \]
with the structure operations   
 \[
\mu^\Gamma_V\colon\grCom_X(\Gamma^*_V)\otimes\grCom_X(\Gamma_V)\to\grCom_X(\Gamma)
 \]
coming from the isomorphisms 
 \[
\bigotimes_{v\in V=V_{\Gamma_V}} X\otimes \bigotimes_{v\in V_\Gamma\setminus V=V_{\Gamma^*_V}} X\cong \bigotimes_{v\in V_\Gamma} X.
 \]
\end{definition}
The reconnectad axioms are trivially satisfied for $\grCom_X$. 

\section{Algebraic constructions for reconnectads}\label{sec:algebra-reconnectads}

In this section, we shall develop the necessary algebraic formalism to work with reconnectads in the linearised context, so we assume that the symmetric monoidal category $\sfC$ is the category $\dgVect$ of chain complexes (or its full subcategory of homologically graded vector spaces, viewed as chain complexes with zero differential, or the full subcategory of ungraded vector spaces, viewed as chain complexes concentrated in homological degree zero).  

Since a lot of results of this section rely on good understanding of free reconnectads, let us remind the reader that, within our approach, elements of the free reconnectad are linear combinations of elements corresponding to additional decorations of trees $\mathbb{T}_\tau$ associated to nested sets: each vertex $T$ of such tree is decorated by an element of $\calX((\Gamma_T)^*_{T\setminus\lambda(T)})$. There are many other ways to view the free reconnectad: for instance, one may use the general result of \cite{MR2471247} on free monoids in monoidal categories, or the general formalism of \cite{MR3636409} that constructs the free operad for the given Feynman category. It is almost immediate that our construction of the free reconnectad is isomorphic to those other ones; we choose it since it can be made sufficiently concrete but yet has certain categorical elegance. 

\subsection{Presentations by generators and relations}

The notion of an ideal is available for reconnectads in the category of vector spaces. In general, the notion of ideal should be given in a way that the first isomorphism theorem holds: ideals are kernels of surjective morphisms of reconnectads. Precisely, an \emph{ideal} in a reconnectad $\calF$ is a graphical subcollection $\calI\subset\calF$ for which every structure map $\mu_V^\Gamma$, when evaluated on elements of $\calF(\Gamma^*_V)\otimes\calF(\Gamma_V)$ where at least one tensor factor is in $\calI$, assumes values in $\calI$.

Using ideals of free reconnectads, we may talk about presentations of reconnectads by generators and relations.  A reconnectad $\calF$ is \emph{presented by generators $\calX$ and relations $\calR\subset\calN(\calX)$} if it is isomorphic to the quotient of the free reconnectad $\calN(\calX)$ by the ideal $\langle\calR\rangle$ generated by $\calR$. Let us give some examples of presentations by generators and relations. 

\begin{proposition}\label{prop:relgrcom}
The reconnectad $\grCom_X$ is generated by the graphical collection $\calX$ supported at $P_1$ for which $\calX(P_1)=X$, and the defining relations are 
 \[
\mu^{P_2}_{1}(x_1\otimes x_2)=\mu^{P_2}_{2}(\sigma(x_1\otimes x_2)).
 \]
\end{proposition}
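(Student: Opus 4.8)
The plan is to build the canonical comparison morphism from the free reconnectad and identify its kernel with $\langle\calR\rangle$. Write $\calX$ for the generating collection (supported at $P_1$ with $\calX(P_1)=X$); since $\grCom_X$ is a reconnectad with $\grCom_X(P_1)=X$, freeness of $\calN(\calX)$ yields a unique reconnectad morphism $\phi\colon\calN(\calX)\to\grCom_X$ extending $\mathrm{id}_X$, and the assertion is exactly that $\langle\calR\rangle=\ker\phi$. The inclusion $\langle\calR\rangle\subseteq\ker\phi$ is immediate: $\ker\phi$ is an ideal, and the displayed relation manifestly holds in $\grCom_X$, where both sides are the canonical recombination of the two tensor slots. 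The work is in the reverse inclusion.

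First I would record the structure of the free reconnectad on $\calX$. Because $\calX$ is concentrated at $P_1$ with trivial $\Aut(P_1)$-action, a summand $\bigotimes_{T\in\tau}\calX\bigl((\Gamma_T)^*_{T\setminus\lambda(T)}\bigr)$ of $\calN(\calX)(\Gamma)$ is nonzero only when each $(\Gamma_T)^*_{T\setminus\lambda(T)}$ has a single vertex, i.e.\ when every label $\lambda(T)$ is a singleton; call such $\tau$ a \emph{maximal} nested set (equivalently $|\tau|=|V_\Gamma|$). For such $\tau$ the bijection $T\mapsto\lambda(T)$ between $\tau$ and $V_\Gamma$ gives a canonical isomorphism $\iota_\tau\colon W_\tau:=\bigotimes_{T\in\tau}\calX(P_1)\xrightarrow{\ \sim\ }\bigotimes_{v\in V_\Gamma}X=\grCom_X(\Gamma)$. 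Unwinding the definitions, $\calN(\calX)(\Gamma)=\bigoplus_{\tau\text{ maximal}}W_\tau$ and $\phi|_{W_\tau}=\iota_\tau$. Hence $\phi$ is surjective (maximal nested sets exist: they are the vertices of $\calP\Gamma$), and $\ker\phi$ is spanned by the differences $\iota_\tau^{-1}(w)-\iota_{\tau'}^{-1}(w)$ over maximal nested sets $\tau,\tau'$ of $\Gamma$ and elements $w$ of $\grCom_X(\Gamma)$. It therefore suffices to show that every such difference lies in $\langle\calR\rangle$.

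The geometric input is that the maximal nested sets of $\Gamma$ index the vertices of the polytope $\calP\Gamma$ and that the $1$-skeleton of a polytope is connected, so I may assume $\tau$ and $\tau'$ are joined by an edge of $\calP\Gamma$. Such an edge is a nested set $\sigma\in N^+(\Gamma)$ with exactly one tube $T_0$ whose label $\lambda_\sigma(T_0)$ has two elements (all other labels being singletons), and its two endpoints $\tau,\tau'$ are obtained from $\sigma$ by inserting, at the vertex $T_0$ of the tree $\mathbb{T}_\sigma$, the two maximal nested sets of the connected two-vertex graph $\Lambda:=(\Gamma_{T_0})^*_{T_0\setminus\lambda_\sigma(T_0)}\cong P_2$. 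Reading this insertion through the monad multiplication $\calN(\calN(\calX))\to\calN(\calX)$ — the ``a nested set of nested sets is a nested set'' map — exhibits $\iota_\tau^{-1}(w)$ and $\iota_{\tau'}^{-1}(w)$ as the images of $\bar w\otimes a$ and $\bar w\otimes b$ under one and the same structure operation of $\calN(\calX)$, where $\bar w$ is the common decoration of the tubes $T\neq T_0$ determined by $w$ and $a,b$ are elements of the two respective summands of $\calN(\calX)(\Lambda)=\calN(\calX)(P_2)$ satisfying $\phi(a)=\phi(b)$. But the displayed relation says precisely that $\calR(P_2)$ spans $\ker\phi(P_2)=\{a-b:\phi(a)=\phi(b)\}$, so $a-b\in\calR(P_2)$; applying the structure operation $\bar w\otimes(-)$ then places $\iota_\tau^{-1}(w)-\iota_{\tau'}^{-1}(w)$ in the ideal generated by $\calR$, as needed.

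The step I expect to be the real obstacle is the combinatorial lemma invoked in the last paragraph: verifying that an edge of $\calP\Gamma$ is exactly the datum of a nested set $\sigma$ with one label of size two (all others of size one), and that its two completions to maximal nested sets correspond, under the reconnected-complement bookkeeping, to the two maximal tubings of the two-vertex graph $\Lambda$. Once this identification is in hand, the compatibility of the isomorphisms $\iota_\tau$ with compositions and the tracking of decorations in the last step are routine.
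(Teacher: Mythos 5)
Your argument is correct, but it follows a genuinely different route from the paper's. The paper proves generation by peeling off one vertex at a time (composing at $S=V_\Gamma\setminus\{v\}$ and inducting on $|V_\Gamma|$), and then establishes sufficiency of the relations by a rewriting/normal-form argument: the stated relation together with the parallel axiom lets one convert any iterated composition into a canonical one attached to a chosen spanning tree of $\Gamma$, assembled vertex by vertex through connected intermediate stages. You instead compute the free reconnectad on a $P_1$-concentrated collection explicitly (its $\Gamma$-component is $\bigoplus_\tau W_\tau$ over maximal nested sets, each $W_\tau\cong X^{\otimes V_\Gamma}$ via the labels), observe that the comparison map restricts to the canonical isomorphism on each summand, so the kernel is spanned by differences $\iota_\tau^{-1}(w)-\iota_{\tau'}^{-1}(w)$, and then use the facial structure of $\calP\Gamma$ — vertices are maximal nested sets, edges are nested sets with a unique label of size two, the $1$-skeleton is connected — to reduce every such difference to an edge flip, which is exactly an instance of the quadratic relation placed at the two-vertex graph $(\Gamma_{T_0})^*_{T_0\setminus\lambda(T_0)}\cong P_2$ and composed into a context. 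The paper's proof is shorter and purely algebraic; yours makes the kernel explicit and exposes the polytopal mechanism (flips between adjacent maximal tubings), at the cost of importing the Carr--Devadoss description of faces of $\calP\Gamma$.

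Two small points. The ``combinatorial lemma'' you flag is not really an obstacle: it is the standard facial structure of graph associahedra (the face indexed by $\sigma$ is $\prod_{T\in\sigma}\calP\bigl((\Gamma_T)^*_{T\setminus\lambda(T)}\bigr)$, so an edge corresponds to a unique label of size two and has exactly two vertices, obtained by adjoining the unique admissible tube absorbing one of the two free vertices together with its adjacent children), a fact the paper itself invokes, e.g., in the proof of Proposition \ref{prop:cobar-associahedra}. Also, when the size-two label sits at a non-root tube $T_0$, the insertion is not literally a single structure operation: you first split off the inside of $T_0$ via $\circ_{T_0}^\Gamma$ and then apply the composition at $T_0\setminus\lambda(T_0)$ inside $\Gamma_{T_0}$, so the difference is an iterated composition with one slot in $\calR$. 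Since ideals are closed under iterated structure maps, this is harmless, but the phrase ``one and the same structure operation'' should be replaced by this two-step (or monadic) formulation.
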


\begin{proof}
First, we note that the component $X=\grCom_X(P_1)$ generates the reconnectad $\grCom_X$, since for each graph $\Gamma$ and each vertex $v\in V_\Gamma$, we can consider the set $S=V_\Gamma\setminus\{v\}$, and already the map 
 \[
\mu^\Gamma_S\colon \grCom_X(\Gamma^*_S)\times\grCom_X(\Gamma_S)\to\grCom_X(\Gamma) 
 \]
is surjective, since $\Gamma^*_S=\{v\}$, and we may argue by induction on $|V_\Gamma|$. Let us show that the given relations are sufficient to present the reconnectad $\grCom_X$. For that, we note that the given relations and the parallel axioms of a reconnectad allow one to transform any iterated composition into one defined by a chosen spanning tree of $\Gamma$ and a particular way of ``assembling'' that tree by adding vertices one by one while keeping the intermediate graph connected.
\end{proof}

\subsection{Bar-cobar duality and Koszul duality}\label{sec:dualities}

In the case of operads, the ``true'' reason behind the bar-cobar duality and the closely related to it Koszul duality comes from the fact that the coloured operad encoding operads happens to be Koszul. In this section, we shall see that the situation with reconnectads is completely analogous. 

\subsubsection{Koszulness of the coloured operad $\texorpdfstring{\Rec}{Rec}$}

\begin{proposition}
The groupoid-coloured operad $\Rec$ is Koszul. 
\end{proposition}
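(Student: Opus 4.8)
The plan is to mimic the standard proof that the coloured operad governing ordinary symmetric operads is Koszul, as carried out by Van der Laan and used pervasively in the operadic Koszul duality machinery (and as adapted to modular operads by Ward and to cyclic operads in \cite{DSVV}). The operad $\Rec$ is quadratic by construction (Definition \ref{def:groupoidcoloured}), so the task is to exhibit a quadratic Gröbner-type basis, or equivalently to verify that the Koszul complex is acyclic, or — the route I would take — to show that the bar construction on $\Rec$ has homology concentrated in the top weight, i.e. that $\Rec$ is Koszul by producing an explicit basis of the Koszul dual and matching dimensions via an Euler characteristic / poset-fibration argument.

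Concretely, I would first observe that, by Proposition \ref{prop:infini}, algebras over $\Rec$ are reconnectads, and the free reconnectad on a graphical collection $\calX$ has the explicit description $\calN(\calX)$ in terms of decorated trees $\mathbb{T}_\tau$ indexed by augmented nested sets $\tau \in N^+(\Gamma)$. The key structural input is therefore that the ``monomials'' of $\Rec$ — iterated infinitesimal compositions $\circ^\Gamma_T$ modulo the parallel and consecutive axioms — are in bijection with these decorated nested-set trees; this is exactly what the parallel and consecutive relations accomplish, letting one rewrite any composite into a normal form dictated by the tree $\mathbb{T}_\tau$. So the first step is to set up a rewriting system on the generators $\circ^\Gamma_T$ whose normal forms are indexed by $N^+(\Gamma)$ together with compatible colourings, and to check the rewriting rules coming from the parallel axiom (swapping the order of two disjoint-tube compositions) and the consecutive axiom (re-associating nested-tube compositions) are confluent. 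Confluence of a quadratic rewriting system is precisely a quadratic Gröbner basis, which implies Koszulness.

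The cleanest packaging, which I would actually write up, is via the partial order on the relevant indexing sets: the normal-form monomials of weight $n$ with a fixed output colour $\Gamma$ are indexed by the simplices of the augmented nested set complex $N^+(\Gamma)$, and the bar differential of $\Rec$ computes (up to shift and the $\Aut(\Gamma)$-action) the simplicial chains of these complexes. Since $N^+(\Gamma)$ is the realisation of the augmented nested set complex, which is known to be a cone (contractible) in the non-augmented picture and, more to the point, whose reduced homology vanishes except in top degree by the standard facts on nested set complexes / building sets of De Concini–Procesi and Feichtner–Yuzvinsky — equivalently $N(\Gamma)$ is shellable and homotopy equivalent to a wedge of top-dimensional spheres — the bar homology of $\Rec$ is concentrated in the expected (diagonal) weight. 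That is the definition of Koszulness for a groupoid-coloured operad. So the remaining steps are: (i) identify the bar complex of $\Rec$ in each colour $\Gamma$ and weight $n$ with the (augmented, reduced) simplicial chain complex of $N^+(\Gamma)$ tensored with the colour decorations, carefully tracking the $\Aut(\Gamma^*_T)\times\Aut(\Gamma_T)$ and $\Aut(\Gamma)$ actions from Definition \ref{def:groupoidcoloured}; (ii) invoke the known acyclicity / shellability of nested set complexes for graphical building sets; (iii) conclude.

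The main obstacle, I expect, is step (i): matching the combinatorics of iterated partial compositions in $\Rec$ — which a priori live on \emph{trees of reconnected complements} $(\Gamma_T)^*_{T\setminus\lambda(T)}$ — with the simplicial structure of the nested set complex, in an $\Aut(\Gamma)$-equivariant way, and making sure the differentials agree with signs. This is the point where one genuinely uses the identities $(\Gamma^*_U)^*_{V\setminus U}=\Gamma^*_V$, $(\Gamma^*_U)_{V\setminus U}=(\Gamma_V)^*_U$, $(\Gamma_V)_U=\Gamma_U$ from the proof of Proposition \ref{prop:assoc}, together with the description of $N^+(\Gamma)$ via the trees $\mathbb{T}_\tau$ from Section \ref{sec:rec-wond}. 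Once this dictionary is in place, Koszulness is essentially the statement that ``nested sets of nested sets are nested sets'' (already illustrated for the monad $\calN$ in the text) has acyclic fibres, which is the shellability of $N(\Gamma)$. An alternative, slightly more self-contained route that avoids topology is to exhibit directly a quadratic Gröbner basis by choosing, for each $\Gamma$, a total order on $G_\Gamma$ refining inclusion and checking that the parallel and consecutive relations, oriented appropriately, have no nontrivial S-polynomial obstructions beyond those already among the relations — a finite check in weight three — and I would mention this as the fallback if the topological identification proves notationally heavy.
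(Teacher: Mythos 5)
Your main route (steps (i)--(iii)) rests on a misidentification of the bar complex. Monomials of $\Rec$ with output colour $\Gamma$ are indeed indexed by elements of $N^+(\Gamma)$, but the bar differential of the coloured operad $\Rec$ never deletes a tube from a nested set --- that would change the input colours of the operation; it only merges adjacent stages of a fixed iterated composition. Consequently $\mathsf{B}(\Rec)$ splits as a direct sum over the monomials $\tau\in N^+(\Gamma)$ (with their colour decorations), and Koszulness amounts to showing that, for each fixed $\tau$ built from $n$ elementary compositions $\circ^\Gamma_T$, the summand recording the possible stagings of those $n$ compositions has homology concentrated in a single degree. The complex you describe instead --- basis the nested sets, differential adding or removing tubes --- is (up to dualisation) the bar construction of the reconnectad $\grCom$, i.e.\ essentially the cellular chains of the graph associahedra $\calP\Gamma$; contractibility or shellability of $N(\Gamma)$ is exactly what proves that the \emph{reconnectad} $\grCom$ is Koszul (Proposition \ref{prop:cobar-associahedra}), but it has no bearing on the Koszulness of the \emph{coloured operad} $\Rec$. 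So step (ii), invoking De Concini--Procesi/Feichtner--Yuzvinsky-type acyclicity of nested set complexes, is aimed at the wrong target, and the ``main obstacle'' you flag in step (i) is not a notational difficulty but a genuine mismatch of objects.

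What actually carries the proof --- and is how the paper argues --- is the observation that the parallel and consecutive axioms allow one to freely reorder the $n$ elementary compositions producing a fixed monomial: the paper equips the Feynman category $\Gr$ of Remark \ref{rem:Feynman} with a degree function, shows that $S_n$ acts freely on chains of $n$ morphisms of positive degree, concludes that $\Gr$ is a cubical Feynman category in the sense of Kaufmann--Ward, and then cites their theorem that cubical Feynman categories are Koszul \cite{MR3636409,KWKoszul}, identifying the bar construction of $\Gr$ with that of $\Rec$. Your fallback --- orienting the parallel and consecutive relations into a confluent quadratic rewriting system --- is morally this same freeness/commutation statement, but as written it is only an assertion: for a groupoid-coloured operad whose colours carry nontrivial automorphism groups there is no off-the-shelf Gr\"obner/PBW theory (this is precisely why the paper has to introduce shuffle reconnectads at the object level), so you would need either to pass to an ordered or skeletal variant of $\CGr$ or to prove the free-action/cubicality statement directly, and the weight-three critical-pair check is not carried out. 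As it stands, neither route in the proposal completes the proof.
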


\begin{proof}
We shall use the relationship to Feynman categories discussed in Remark \ref{rem:Feynman}, assuming a certain fluency in the language of Feynman categories \cite{MR3636409}. Let us define a degree function $\deg$ on morphisms of $\Gr$ as follows. We set $\deg(f)=0$ if $f$ is an isomorphism, $\deg(f)=1$ if $f=\phi^\Gamma_T$ for $T\in G_\Gamma$, and then imposing
$\deg(f\sqcup g)=\deg(f)+ \deg(g)$
and
$\deg(f\circ g)=\deg(f)+\deg(g)$. 
It is immediate to see that the degree is well defined and is a proper degree function in the sense of \cite[Def.~7.2.1]{MR3636409}. If we consider the set $C_n(X,Y)$ of all chains of $n$ or more composable morphisms of $\Gr$ of which exactly $n$ have non-zero degree, such that the composition is a morphism $X\to Y$, modulo the equivalence relation induced by composing degree zero morphisms, and the subset $C^+_n(X,Y)$ the subset of $C_n(X,Y)$ consisting of chains of morphisms of degree at most one. Using the parallel and consecutive axioms for infinitesimal compositions, we obtain a free action of $S_n$ on $C_n(X,Y)$. Specifically, an equivalence class in $C_n(X,Y)$ may be identified with a sequence of $n$ morphisms of non-zero degree. Each such morphism corresponds to a nested set of a graph, and for every pair of adjacent morphisms, we have a unique corresponding commutative diagram, induced by those of simple reduction maps. We define the action of the transposition $(i\ i+1)$ on a chain $(f_1,\ldots,f_n)$ to be the map sending the chain to the chain obtained by replacing morphisms  $f_i,f_{i+1}$ by the unique morphisms $g_i,g_{i+1}$ coming from the associated commutative diagram. More explicitly, we must have that $f_i,f_{i+1}$ correspond to a 2-step reconnected complement of a graph $\Gamma$; performing them in the opposite order (in the appropriate sense) leads to the choice of $g_i,g_{i+1}$. This action is clearly free, and $C^+_n(X,Y)_{S_n}\cong \Hom_n(X,Y)$. Thus, implies that, in the terminology of \cite[Def.~7.2.2]{MR3636409}, the Feynman category $\Gr$ is cubical. By the main result of \cite{KWKoszul}, this Feynman category is Koszul. Examining the bar construction of the Feynman category $\Gr$, we find exactly the bar construction of the groupoid coloured operad $\Rec$, and therefore the latter operad is Koszul.   
\end{proof}

This proposition ensures that there are reconnectad analogues of the bar-cobar duality and the Koszul duality. In general, the bar-cobar duality (and Koszul duality, where available) for generalised operads defined using Feynman categories involves the so called $\mathfrak{K}$-twists, see \cite{MR3636409}. However, the fact that the $\CGr$-coloured operad $\Rec$ has only binary operations and only quadratic relations, easily implies that one can get rid of the sign twists and obtain an honest duality between the categories of differential graded reconnectads and differential graded coreconnectads; in this way, the case of reconnectads much more similar to associative algebras (algebras over the Koszul self-dual operad of associative algebras) or operads \cite{VanDerLaan03} than to modular operads \cite{DSVV,MR4425832}. For that reason, we do not give a lot of detail on it: we indicate some key aspects of the theory, trusting that in the case of proofs that are nearly identical to those for the case of nonsymmetric operads the reader will be able to reconstruct the counterpart for reconnectads either on their own or with help of \cite{MR2954392}.  

We note that for each graphical collection $\calX$, the free reconnectad $\calN(\calX)$ has a standard weight grading for which the generators $\calX$ are in weight grading one. This grading is additive under compositions. Moreover, it is often the case that operads presented by generators and relations have homogeneous relations, and so there is an induced weight grading on the quotient. 

The main reason behind both the bar-cobar duality and the Koszul duality theory comes from searching for ``good'' models of algebraic objects. A model for a reconnectad $\calF$ is a differential graded reconnectad $\calM$ with a morphism $\calM\to\calF$ that induces an isomorphism on the homology. Under very mild assumptions, reconnectads have (unique up to isomorphism) minimal models. A \emph{minimal model} of a reconnectad is a differential graded reconnectad whose underlying reconnectad is free and whose differential is \emph{decomposable}: the differential of each generator is a combination of elements of weight grading strictly larger than one.

\subsubsection{Twisting morphisms}

Let us explain how the notion of a twisting morphism adapts to reconnectads; we outline the necessary statements, and all the proofs are \emph{mutatis mutandis} those of \cite[Sec.~6.4]{MR2954392}. 

Let $(\calG,\Delta)$ be a coreconnectad and $(\calF,\mu)$ be a reconnectad. Let us consider the graphical collection $\Hom(\calG,\calF)$ defined by
 \[
\Hom(\calG,\calF)(\Gamma)=\Hom_{\k}(\calG(\Gamma),\calF(\Gamma)). 
 \]
It has an obvious reconnectad structure defined as follows. To define the structure map
 \[
\Hom(\calG,\calF)(\Gamma^*_V)\otimes \Hom(\calG,\calF)(\Gamma_V)\to \Hom(\calG,\calF)(\Gamma),
 \]
one needs to be able to evaluate 
 \[
f\otimes g_1\otimes\cdots\otimes g_k\in \Hom(\calG,\calF)(\Gamma^*_V)\otimes\bigotimes_{\Gamma'\in\mathrm{Conn}(\Gamma_V)} \Hom(\calG,\calF)(\Gamma')
 \]
on an element $x\in\calG(\Gamma)$. For that, one applies first the coreconnectad decomposition map $\Delta_G^\Gamma$ to $x$, then the map $f\otimes g_1\otimes\cdots\otimes g_k$ to the respective tensor factors, and then the reconnectad composition map $\mu_G^\Gamma$ to the result. Moreover, if $\calG$ and $\calF$ are both differential graded, the differential $\partial$ of the Hom complex makes it into a differential graded reconnectad.

Let us explain how to associate to every reconnectad a pre-Lie algebra; a particular case of this construction is discussed in \cite[Rem.~3.1.3]{https://doi.org/10.1112/jlms.12596}. Given a reconnectad $\calF$, we define
 \[
\mathrm{Tot}(\calF):=\bigoplus_{\emptyset\ne V_\Gamma\subset\mathbb{N}}\calF(\Gamma),
 \]
set, for $\alpha\in\calF(\Gamma_1)$, $\beta\in\calF(\Gamma_2)$,
 \[
\alpha\star\beta=\sum_{\Gamma, T\colon \Gamma^*_T=\Gamma_1,\Gamma_T=\Gamma_2}\mu_T^\Gamma(\alpha\otimes\beta)
 \]
and then extend $\star$ to $\mathrm{Tot}(\calF)$ as a bilinear operation. Then the axioms of infinitesimal structure operations of a reconnectad listed in Proposition \ref{prop:infini} ensure that $(\mathrm{Tot}(\calF),\star)$ is a (right) pre-Lie algebra, that is
 \[
(a_1\star a_2)\star a_3-a_1\star(a_2\star a_3)=(-1)^{|a_2||a_3|}((a_1\star a_3)\star\ a_2-a_1\star(a_3\star a_2)).
 \] 

As it is in the case of operads, one can show that the space of equivariant maps 
 \[
\mathrm{Tot}(\Hom_{\Aut}(\calG,\calF)):=\bigoplus_{\emptyset\ne V_\Gamma\subset\mathbb{N}}\Hom_{\Aut(\Gamma)}(\calG(\Gamma),\calF(\Gamma))
 \]
is a pre-Lie subalgebra of $\mathrm{Tot}(\Hom(\calG,\calF)$. We shall refer to the Lie algebra obtained by anti-symmetrising the pre-Lie product $\star$ as the \emph{convolution dg Lie algebra of the coreconnectad $\calG$ and the reconnectad $\calF$. } The \emph{Maurer--Cartan elements} of that dg Lie algebra, that is degree $-1$ solutions to the equation
 \[
\partial(\alpha)+\alpha\star\alpha=0,
 \]
are of particular importance: they give nontrivial ways to twist the usual chain complex structure of the reconnected product $\calG\circ_{\rmR}\calF$ with a structure of a chain complex, denoted $\calG\circ^\alpha_{\rmR}\calF$. We denote by $\Tw(\calG,\calF)$ the set of all elements like that, and call them reconnectadic twisting morphisms.

\subsubsection{Bar-cobar duality}

In this section, we outline the main steps to construct the bar-cobar adjunction between reconnectads and coreconnectads. As above, we outline the necessary statements, and all the proofs are \emph{mutatis mutandis} those of \cite[Sec.~6.5]{MR2954392}. 

Let $\calF$ be a differential graded reconnectad $\calF$. Let us define two coderivations of the cofree coreconnectad $\calN^c(s\calF)$. The coderivation $d_1$ is the unique extension of the map
 \[
\calN^c(s\calF)\twoheadrightarrow s\calF\to s\calF,
 \]
where the first arrow is the obvious projection of graphical collections, and the second arrow corresponds to the differential of $\calF$. The coderivation $d_2$ is the unique extension of the map
 \[
\calN^c(s\calF)\to s\calF 
 \]
obtained, in each arity $\Gamma$, as the projection onto
 \[
\bigoplus_{T\in G_\Gamma}s\calF(\Gamma^*_T)\otimes s\calF(\Gamma_T)
 \]
for all $T\in G_\Gamma$, followed by the (de)suspended structure map 
 \[
\mu_T^\Gamma\colon \calF(\Gamma^*_T)\otimes \calF(\Gamma_T)\to \calF(\Gamma).
 \] 
It is easy to see that the property of $\calF$ to be a differential graded reconnectad can be compactly written as $d_1^2=0$, $d_1d_2+d_2d_1=0$, $d_2^2=0$, implying that we have $(d_1+d_2)^2=0$.

\begin{definition}[bar construction]
For a differential graded reconnectad $\calF$, the bar construction $\mathsf{B}(\calF)$ is the differential graded coreconnectad 
 \[
(\calN^c(s\calF),d_1+d_2).
 \]
\end{definition}

Let us note that in the case of operads, one defines the bar construction by building the cofree cooperad on the \emph{augmentation ideal}. 
In the universe of reconnectads, the assumption $\calF(\emptyset)=1_{\sfC}$ takes care of many problems: each graphical collection $\calF$, whether given a reconnectad structure or not, has the unit attached to it as $\calF(\emptyset)$, and the construction of a free reconnectad on a graphical collection in fact constructs the free reconnectad on the corresponding augmentation ideal. In addition, the number of vertices of the graph is, tautologically, a weight grading that may be used where an operadic analogue would need it as an extra condition.  

Dually, for a differential graded coreconnectad $\calG$, one may define two derivations of the free reconnectad $\calN(s^{-1}\calG)$. The derivation $d_1$ is the unique extension of the map
 \[
s^{-1}\calG\to s^{-1}\calG\hookrightarrow\calN(s^{-1}\calG),
 \]
where the first arrow corresponds to the differential of $\calG$ and the second arrow is the obvious inclusion of graphical collections. The derivation $d_2$ is the unique extension of the map
 \[
s^{-1}\calG\to \calN(s^{-1}\calG) 
 \]
obtained, in each arity $\Gamma$, as the (de)suspended structure maps 
 \[
\Delta_T^\Gamma\colon  \calG(\Gamma)\to \calG(\Gamma^*_T)\otimes \calG(\Gamma_T)
 \]
for all $T\in G_\Gamma$, followed by the inclusion of
 \[
\bigoplus_{T\in G_\Gamma}s\calG(\Gamma^*_T)\otimes s\calG(\Gamma_T)
 \]
into the free reconnectad. It is easy to see that the property of $\calG$ to be a differential graded coreconnectad can be compactly written as $d_1^2=0$, $d_1d_2+d_2d_1=0$, $d_2^2=0$, implying that we have $(d_1+d_2)^2=0$.

\begin{definition}[cobar construction]
For a differential graded coreconnectad $\calG$, the cobar construction $\Omega(\calG)$ is the differential graded reconnectad 
 \[
(\calN(s^{-1}\calG),d_1+d_2).
 \]
\end{definition}

The following result is completely analogous to \cite[Th.~6.5.10]{MR2954392}.

\begin{proposition}
The bar and the cobar construction form an adjoint pair; moreover, we have for every dg reconnectad $\calF$ and every dg coreconnectad $\calG$
 \[
\mathsf{Hom}_{\mathrm{dg\ rec}}(\Omega(\calG), \calF)\cong\Tw(\calG,\calF) \cong \mathsf{Hom}_{\mathrm{dg\ corec}}(\calG, \mathsf{B}(\calF)).
 \]
\end{proposition} 

For every reconnectad $\calF$, there is the ``universal twisting morphism'' $$\pi\colon\mathsf{B}(\calF)\to\calF$$ obtained as projection of $\calN^c(s\calF)\to s\calF $ followed by desuspension. It is easy to check that the twisted reconnected product $\mathsf{B}(\calF)\circ^\pi_{\rmR}\calF$ is an acyclic complex.

\begin{definition}[Koszul twisting morphism]
For a coreconnectad $\calG$ and a reconnectad $\calF$, a twisting morphism $\alpha\colon\calG\to\calF$ is said to be a \emph{Koszul morphism} if the twisted reconnected product $\calG\circ^\alpha_{\rmR}\calF$ is acyclic. 
\end{definition}

The following result is completely analogous to \cite[Th.~6.6.2]{MR2954392}.

\begin{proposition}
A twisting morphism $\alpha$ is a Koszul morphism if and only if either of the maps $f_\alpha\colon\calG\to\mathsf{B}(\calF)$ and $g_\alpha\colon\Omega(\calG)\to\calF$ is a quasi-isomorphism.
\end{proposition}

This result immediately implies the following corollary.

\begin{corollary}
For every reconnectad $\calF$, the cobar-bar construction $\Omega(\mathsf{B}(\calF))$ is quasi-isomorphic to $\calF$. 
\end{corollary}

\subsubsection{Koszul duality}

In this section, we summarise the main aspects of the Koszul duality theory for reconnectads. As above, we outline the necessary statements, and all the proofs are \emph{mutatis mutandis} those of \cite[Chap.~7]{MR2954392}. 

As in the case of algebras and operads, the most manageable situation arises in the case of \emph{quadratic} reconnectads, for which all relations are of weight two. 

\begin{definition}[Koszul dual coreconnectad]
Let $\calF$ be a reconnectad with generators $\calX$ and quadratic relations $\calR$, which we shall refer to as \emph{quadratic data}. To such a reconnectad, one may associate its \emph{Koszul dual coreconnectad} $\calF^{\text{!`}}$, defined as the coreconnectad with cogenerators $s\calX$ and corelations $s^2\calR$. 
\end{definition}

We shall also consider the Koszul dual reconnectad obtained, like in the case of operads, by dualising and (de)suspending, as follows.

\begin{definition}[(de)suspension reconnectad] For $X=\k s^{-1}$, the commutative reconnectad $\grCom_X$ is called the \emph{suspension reconnectad}, and is denoted $\mathsf{S}$. In the same vein, for $X=\k s$, the commutative reconnectad $\grCom_X$ is called the \emph{desuspension reconnectad}, and is denoted $\mathsf{S}^{-1}$. 

We use these reconnectads to define suspensions and desuspensions for arbitrary graphical collections by the formulas
\begin{gather*}
\mathsf{S}\calF:=\mathsf{S}\underset{H}{\otimes}\calF,\\
\mathsf{S}^{-1}\calF:=\mathsf{S}^{-1}\underset{H}{\otimes}\calF.
\end{gather*}
Note that if $\calF$ is a reconnectad, then its suspension and desuspension are both reconnectads.
\end{definition} 

The reader may think that the chosen terminology for (de)suspension is somewhat counterintuitive. It is chosen in such way to match the operadic suspension under the equivalence of Proposition \ref{prop:NS}. 

\begin{definition}[Koszul dual reconnectad]
For a reconnectad $\calF$ presented by generators $\calX$ and relations $\calR$, we define the Koszul dual reconnectad $\calF^!$ by the formula
 \[
\calF^!=\mathsf{S}^{-1}(\calF^{\text{!`}})^*.
 \]
\end{definition} 

The following result is analogous to \cite[Prop.~7.2.4]{MR2954392}.

\begin{proposition}
Suppose that all components of the reconnectad $\calF$ are finite dimensional. The reconnectad $\calF^!$ is presented by the quadratic data of generators $s^{-1}\mathsf{S}^{-1}\calX^*$ and relations $\calR^\bot$, where $\bot$ refers to the annihilator under the natural pairing. 
\end{proposition}

Let $\calF$ be a quadratic reconnectad. We have two natural morphisms: inclusion of dg coreconnectads $\calF^{\text{!`}} \hookrightarrow \mathsf{B}(\calF)$ and surjection of reconnectads $\Omega(\calF^{\text{!`}}) \twoheadrightarrow \calF$. Moreover, they both correspond to the same twisting morphism between $\calF^{\text{!`}}$ and $\calF$ which projects the former onto cogenerators, desuspends, and includes the result as the space of generators. We say that a reconnectad is \emph{Koszul} if that twisting morphism is a Koszul morphism.

Let us give the first nontrivial example of a Koszul reconnectad. 

\begin{proposition}\label{prop:cobar-associahedra}
The reconnectad $\grCom$ is Koszul.
\end{proposition}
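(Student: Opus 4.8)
The plan is to exhibit an explicit quasi-isomorphism realising the Koszul property of $\grCom$, using the geometric input from the toric varieties of graph associahedra. Recall that $\grCom$ has a single one-dimensional component in every arity, generated by the graphical collection $\calX$ supported at $P_1$ (Proposition \ref{prop:relgrcom}), with the commutativity relation on $P_2$; this is a quadratic presentation. To prove Koszulness, I must show that the twisting morphism $\kappa\colon\grCom^{\text{!`}}\to\grCom$ is a Koszul morphism, i.e. that the twisted reconnected product $\grCom^{\text{!`}}\circ^\kappa_{\rmR}\grCom$ is acyclic. Equivalently, by the statement recalled after the definition of Koszul twisting morphisms, it suffices to show that the cobar construction $\Omega(\grCom^{\text{!`}})$ is a resolution of $\grCom$, i.e. that the surjection $\Omega(\grCom^{\text{!`}})\twoheadrightarrow\grCom$ is a quasi-isomorphism.

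The key geometric observation is that the free reconnectad $\calN(\calX)$ with $\calX$ as above is modelled on the trees $\mathbb{T}_\tau$ indexed by nested sets, and these are exactly the simplices of the augmented nested set complexes $N^+(\Gamma)$. Thus the arity-$\Gamma$ component of the cobar construction $\Omega(\grCom^{\text{!`}})$ should be identified, up to suspension and signs, with the (augmented, reduced) simplicial chain complex of the nested set complex $N^+(\Gamma)$ — which is precisely the chain complex computing the (co)homology of the boundary stratification of the graphical variety $X(\calP\Gamma)$, or equivalently of the graph associahedron $\calP\Gamma$ itself viewed as a polytope. Since $\calP\Gamma$ is a polytope, hence contractible, and the nested set complex is (the barycentric-type subdivision of) its boundary sphere, the relevant reduced homology is concentrated in a single degree and one-dimensional, matching $\grCom(\Gamma)=\k$. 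First I would set up this dictionary precisely: spell out the differential $d_2$ on $\Omega(\grCom^{\text{!`}})$ in terms of the tree substitution maps, match it with the simplicial boundary map of $N^+(\Gamma)$ (the face corresponding to contracting an edge of $\mathbb{T}_\tau$, i.e. removing a tube), and check the signs. Then I would invoke the known contractibility/homology computation for graph associahedra (equivalently the Carr–Devadoss result that $\calP\Gamma$ is a polytope and the nested set complex triangulates its boundary sphere $S^{|V_\Gamma|-2}$).

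Alternatively — and this is probably the cleaner route given the machinery already developed in the excerpt — I would instead reduce to the already-established cases via Propositions \ref{prop:LM} and \ref{prop:NS}: the analogous statements for the commutative operad ($\mathsf{Com}$ being Koszul, with Koszul dual $\mathsf{Lie}$) and for the associative operad are classical, and the nested-set-complex homology argument is uniform across all graphs $\Gamma$, so one proves acyclicity of $\grCom^{\text{!`}}\circ^\kappa_{\rmR}\grCom$ arity by arity by the single-graph topological computation above. The main obstacle I anticipate is bookkeeping: getting the signs in the cobar differential to agree with the reduced simplicial differential of $N^+(\Gamma)$, and correctly handling the desuspension/suspension twists $\mathsf{S}^{-1}$ built into $\grCom^{\text{!`}}$ and the cobar construction, so that the degree in which the homology is concentrated matches the single generator of $\grCom(\Gamma)$. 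Once the identification of $\Omega(\grCom^{\text{!`}})(\Gamma)$ with the (shifted, reduced) chain complex of the boundary sphere of $\calP\Gamma$ is in place, acyclicity of the twisted product in positive weight is immediate from the fact that a sphere has reduced homology concentrated in top degree, and the quasi-isomorphism $\Omega(\grCom^{\text{!`}})\twoheadrightarrow\grCom$ follows.
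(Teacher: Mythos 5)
Your proposal is essentially the paper's own argument: the paper proves Koszulness exactly by showing that $\Omega(\grCom^{\text{!`}})\twoheadrightarrow\grCom$ is a quasi-isomorphism, identifying the arity-$\Gamma$ component of the cobar construction with the face complex of the graph associahedron via the Carr--Devadoss facet decomposition. One correction to your dictionary, though: the cobar differential \emph{splits} a vertex of the decorated tree $\mathbb{T}_\tau$, i.e.\ it \emph{inserts} a tube, $d_2(\epsilon_\Gamma)=\sum_{T\in G_\Gamma}\pm\,\epsilon_{\Gamma^*_T}\otimes\epsilon_{\Gamma_T}$, rather than contracting an edge/removing a tube as you wrote; consequently $\Omega(\grCom^{\text{!`}})(\Gamma)$ is the cellular chain complex of the polytope $\calP\Gamma$ itself (cells indexed by $N^+(\Gamma)$, the interior cell corresponding to $\epsilon_\Gamma$), equivalently the simplicial \emph{cochain} complex of the nested set sphere $N(\Gamma)$, not its chain complex. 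This is why the paper concludes via contractibility of $\calP\Gamma$, with homology $\k$ concentrated in degree $0$, which is precisely where $\grCom(\Gamma)$ sits; your sphere version still works, since the reduced (co)homology of $S^{|V_\Gamma|-2}$ is concentrated in the top simplicial degree, which corresponds to maximal nested sets and hence to cobar degree $0$, but the polytope normalization spares you the degree flip and most of the sign bookkeeping you anticipate. Finally, the suggested reduction via Propositions \ref{prop:LM} and \ref{prop:NS} cannot replace the argument, since those identifications only concern reconnectads supported on complete or path graphs while Koszulness requires acyclicity in every arity $\Gamma$; you do not actually rely on it, and note that the paper also records a second, purely algebraic proof via a quadratic Gr\"obner basis for $\grCom^{\mathrm{f}}$ (see the example following Proposition \ref{prop:shuffleKoszul} together with Corollary \ref{cor:forget}).
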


\begin{proof}
Let us begin with noting that the Koszul dual coreconnectad $\grCom^{\text{!`}} \cong \mathsf{S}^*$ is isomorphic to the linear dual of suspension coreconnectad. Denote by $\beta_{\Gamma} \in S^{*}(\Gamma)$ the basis element of degree $|V_\Gamma|$ which is the linear dual of the basis element $\bigotimes_{v\in V_\Gamma}s^{-1}$. Note that the infinitesimal decomposition $\Delta^\Gamma_T$ on $\grCom^{\text{!`}}$ is given by the formula
 \[
\Delta^\Gamma_T(\beta_{\Gamma})=\mathrm{sgn}(\sigma_T^{\Gamma})\beta_{\Gamma^*_T}\otimes b_{\Gamma_T},
 \] 
with the sign $\mathrm{sgn}(\sigma_T^{\Gamma})$ is the Koszul sign coming from the permutation separating the subset $T$ in the tensor product giving $\beta_{\Gamma}$. The coreconnectad $\grCom^{\text{!`}}$ has zero differential, so the differential of the cobar construction $\Omega(\grCom^{\text{!`}})$ is given by the differential $d_2$ on the free reconnectad on generators $\epsilon_{\Gamma}=s^{-1}\otimes \beta_{\Gamma}$ of degree $|V_\Gamma|-1$. That differential acts on generators by the rule
 \[
d_2(\epsilon_{\Gamma})=\sum_{T\in G_\Gamma}(-1)^{|V_\Gamma|}\mathrm{sgn}(\sigma_T^{\Gamma})\epsilon_{\Gamma^*_T}\otimes \epsilon_{\Gamma_T}.
 \]
A particular case of this formula is displayed in Figure~\ref{fig:differential}. 
\begin{figure}[ht]
    \centering
    \def\svgwidth{\columnwidth}
    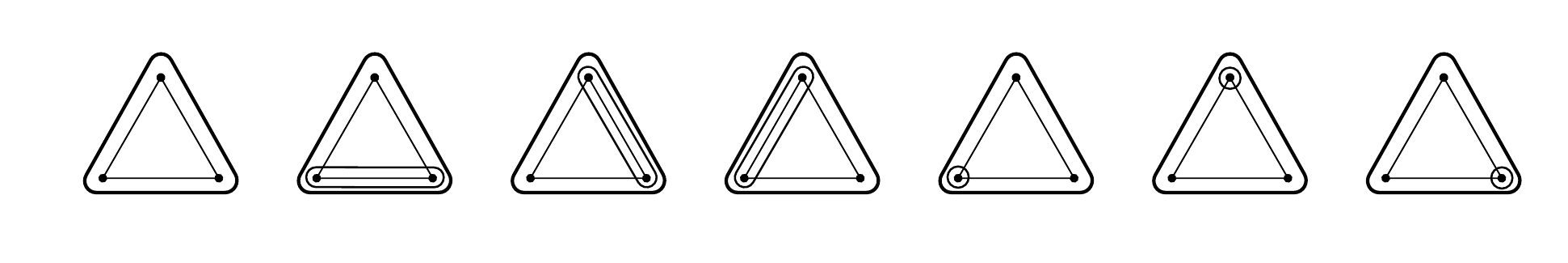
   \caption{The differential of the generator $\epsilon_{C_3}$}
   \label{fig:differential} 
\end{figure}

The remarkable property of graph associahedra stating that each facet of $\calP\Gamma$ is a product of two smaller graph associahedra \cite[Th.~2.9]{MR2239078} implies that the graphical collection of all cellular complexes $\{C^{\mathrm{cell}}_\bullet(\calP\Gamma)\}$ is isomorphic to the cobar construction
$\Omega(\grCom^{\text{!`}})(\Gamma)$ that we consider; the isomorphism sends the cell corresponding to the interior of $\calP\Gamma$ to the generator $\epsilon_{\Gamma}$. Since each graph associahedron is contractible, we have
 \[
H_{\bullet}(\Omega(\grCom^{\text{!`}})(\Gamma),\k) \cong H_{\bullet}(\calP\Gamma,\k)=
\begin{cases}
 \k \text{, for } \bullet=0
     \\
   \,  0 \text{, for } \bullet \neq 0.
     \end{cases}
 \]
In other words, the homology of the cobar construction is concentrated in degree $0$, and therefore the canonical projection 
 \[
\Omega(\grCom^{\text{!`}})\twoheadrightarrow\grCom
 \]
is a quasi-isomorphism.
\end{proof}

This result is very similar to that in \cite{https://doi.org/10.1112/jlms.12596}. However, the context in which the latter proof is given uses graphs with total orders on vertices, and hence is closer to shuffle reconnectads discussed in Section \ref{sec:shuffle} below. In any case, both of these proofs are, via the results discussed in Section \ref{sec:part-types}, common generalisations of two known results. First, the usual proof of Koszulness of the nonsymmetric associative operad $\mathsf{As}$ \cite{MR2954392}; that proof views its minimal model, the $A_\infty$ operad, as the collection of cellular complexes of Stasheff associahedra \cite{MR0158400}. Second, the proof of Koszulness of the associative permutad $\mathsf{permAs}$ \cite{MR2995045} views its minimal model as the collection of cellular complexes of permutahedra.

\subsubsection{Distributive laws for reconnectads}

It is possible to adapt the formalism of distributive laws \cite{MR1393517} to the case of reconnectads.

\begin{definition}[distributive law]
Let $\calF$ and $\calG$ be two reconnectads. We say that a morphism of graphical collections
\[
    \Lambda\colon \calG \circ_{\rmR} \calF \rightarrow \calF \circ_{\rmR} \calG
\]
defines a \emph{distributive law} between $\calF$ and $\calG$ if the composite 
 \[
\calF\circ_{\rmR}\calG\circ_{\rmR}\calF\circ_{\rmR}\calG\stackrel{\mathrm{id}\circ_{\rmR}\Lambda\circ_{\rmR}\mathrm{id} }{\longrightarrow}  
\calF\circ_{\rmR}\calF\circ_{\rmR}\calG\circ_{\rmR}\calG
\stackrel{\mu_{\calF}\circ_{\rmR}\mu_{\calG}}{\longrightarrow}\calF\circ_{\rmR}\calG
 \]

defines a reconnectad structure on the graphical collection $\calF\circ_{\rmR}\calG$.
\end{definition}

Let us define the infinitesimal product of two graphical collections by the formula
 \[
(\calF \circ'_{\rmR} \calG)(\Gamma) = \bigoplus_{T\in G_\Gamma} \calF(\Gamma^*_{V}) \otimes \calG(\Gamma_{V}).
\] 

Given two reconnectads $\calF$ and $\calG$ presented by generators $\calX$ and $\calY$ and relations $\calR$ and $\calS$ respectively, suppose that we are given a morphism of graphical collections $\lambda\colon \calY \circ'_{\rmR} \calX \rightarrow \calX\circ'_{\rmR}\calY$. We may now define a reconnectad $\calF\vee_{\lambda}\calG$ as the reconnectad with generators $\calX\oplus\calY$ and with relations
 \[
\calR\oplus\calS\oplus\langle z-\lambda(z)\colon z\in \calY \circ'_{\rmR} \calX\rangle.
 \]

The following result is proved analogously to \cite[Prop.~8.6.4]{MR2954392}.

\begin{proposition}
For every choice of $\lambda\colon \calY \circ'_{\rmR} \calX \rightarrow \calX\circ'_{\rmR}\calY$, there is a surjective map of graphical collections $\calF\circ_{\rmR}\calG\to \calF\vee_{\lambda}\calG$. If that map is an isomorphism, the composite
\[
\calG\circ_{\rmR} \calF \rightarrow (\calF\vee_{\lambda}\calG)\circ_{\rmR} (\calF\vee_{\lambda}\calG)\to \calF\vee_{\lambda}\calG \cong\calF\circ_{\rmR} \calG
\]
is a distributive law between the reconnectads $\calF$ and $\calG$.
\end{proposition}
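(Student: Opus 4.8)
The plan is to follow the template of the corresponding statement for operads \cite[Prop.~8.6.4]{MR2954392}, transcribed into the setting of the reconnected product by means of Proposition~\ref{prop:infini}. Write $\iota_\calF\colon\calF\to\calF\vee_\lambda\calG$ and $\iota_\calG\colon\calG\to\calF\vee_\lambda\calG$ for the morphisms of reconnectads induced on quotients by the inclusions $\calX\hookrightarrow\calX\oplus\calY$ and $\calY\hookrightarrow\calX\oplus\calY$ of generators (these are well defined because $\calR$ and $\calS$ are sent into the defining relations of $\calF\vee_\lambda\calG$). The map of the statement is the composite
\[
\rho\colon\calF\circ_{\rmR}\calG\xrightarrow{\ \iota_\calF\circ_{\rmR}\iota_\calG\ }(\calF\vee_\lambda\calG)\circ_{\rmR}(\calF\vee_\lambda\calG)\xrightarrow{\ \mu\ }\calF\vee_\lambda\calG,
\]
with $\mu$ the structure map of the reconnectad $\calF\vee_\lambda\calG$.

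First I would prove surjectivity of $\rho$. Every element of $\calF\vee_\lambda\calG$ is a linear combination of classes of decorated trees $\mathbb{T}_\tau$, each vertex decorated by an element of $\calX$ or $\calY$, and such a tree is an iterated infinitesimal composition of its vertex decorations. Whenever in such a tree a $\calY$-decorated vertex is the parent of an $\calX$-decorated vertex, the pair of decorations lies in $\calY\circ'_{\rmR}\calX$, so the defining relation $z-\lambda(z)$ lets one rewrite this local configuration into a combination of configurations with an $\calX$-decorated vertex closer to the root and $\calY$-decorated vertices further from it. Combining these moves with the parallel and consecutive axioms of Proposition~\ref{prop:infini}, one checks exactly as in the operadic case that the rewriting terminates, using a suitable count of ``$\calY$-above-$\calX$'' ancestor pairs as a strictly decreasing measure. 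A monomial in normal form — every $\calX$-decorated vertex closer to the root than every $\calY$-decorated vertex — is precisely the image under $\rho$ of an element of $\calF\circ_{\rmR}\calG$ whose outer tensor factor is the $\calX$-decorated subtree near the root (a monomial of $\calF$) and whose inner tensor factors are the $\calY$-decorated subtrees sitting over the connected components of $\Gamma_V$ (monomials of $\calG$). Hence $\rho$ is surjective.

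Now assume $\rho$ is an isomorphism and put $\Lambda:=\rho^{-1}\circ\mu\circ(\iota_\calG\circ_{\rmR}\iota_\calF)\colon\calG\circ_{\rmR}\calF\to\calF\circ_{\rmR}\calG$, the composite appearing in the statement. Transporting the reconnectad structure of $\calF\vee_\lambda\calG$ along $\rho$ equips $\calF\circ_{\rmR}\calG$ with a monad structure $m$ in the monoidal category $(\GrCol_{\sfC},\circ_{\rmR})$. Since $\calF\circ_{\rmR}\mathbbold{1}\cong\calF$ and $\mathbbold{1}\circ_{\rmR}\calG\cong\calG$ are the direct summands of $\calF\circ_{\rmR}\calG$ corresponding to $V=\emptyset$ and $V=V_\Gamma$, and $\rho$ restricts to $\iota_\calF$ and $\iota_\calG$ on them, the multiplication $m$ restricts to $\mu_\calF$ and $\mu_\calG$ on these summands, $\iota_\calF$ and $\iota_\calG$ are morphisms of monads, and the unit of $m$ is the evident one. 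By Beck's characterisation of distributive laws as monad structures on a composite compatible with the two unit inclusions (see e.g.\ \cite[Sec.~8.6]{MR2954392}), $m$ arises from a unique distributive law $\calG\circ_{\rmR}\calF\to\calF\circ_{\rmR}\calG$, and evaluating this distributive law on the summand $\mathbbold{1}\circ_{\rmR}\calG\circ_{\rmR}\calF\circ_{\rmR}\mathbbold{1}\cong\calG\circ_{\rmR}\calF$ of $(\calF\circ_{\rmR}\calG)\circ_{\rmR}(\calF\circ_{\rmR}\calG)$ and unwinding the definitions identifies it with $\Lambda$. Thus $m=(\mu_\calF\circ_{\rmR}\mu_\calG)\circ(\mathrm{id}\circ_{\rmR}\Lambda\circ_{\rmR}\mathrm{id})$ is a monad structure, which is exactly the assertion that $\Lambda$ is a distributive law between $\calF$ and $\calG$.

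I expect the main obstacle to be in the first half: setting up the rewriting system cleanly in the reconnected setting and proving that it terminates. Applying $\lambda$ genuinely alters the shapes of the decorated trees involved — the subset $T$ indexing an outgoing configuration need not be related to the incoming one — so the termination measure must be chosen with some care. Once termination and the identification of normal forms with the image of $\calF\circ_{\rmR}\calG$ are in place, the remaining verification of the distributive-law axioms is a routine diagram chase, entirely parallel to the operadic argument.
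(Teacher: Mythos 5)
Your proposal is correct and takes essentially the same route the paper intends: the paper only remarks that the statement is proved analogously to the operadic case \cite[Prop.~8.6.4]{MR2954392}, and your argument is precisely that transcription — surjectivity by rewriting with $z\mapsto\lambda(z)$ to the normal form ``$\calX$-decorated vertices root-ward of $\calY$-decorated ones'' (with the inversion count as terminating measure), then transporting the monoid structure along $\rho$ and identifying it with the composite built from $\Lambda$. The only point to tighten is the appeal to Beck's correspondence, which also requires the middle unitary law $m\circ(\mathrm{id}_{\calF}\circ_{\rmR}\eta_{\calG}\circ_{\rmR}\eta_{\calF}\circ_{\rmR}\mathrm{id}_{\calG})=\mathrm{id}$; here it holds at once because $\rho=\mu\circ(\iota_{\calF}\circ_{\rmR}\iota_{\calG})$, or one can bypass Beck entirely by checking directly, using associativity of the multiplication of $\calF\vee_{\lambda}\calG$ and that $\iota_{\calF},\iota_{\calG}$ are reconnectad morphisms, that the transported multiplication equals $(\mu_{\calF}\circ_{\rmR}\mu_{\calG})\circ(\mathrm{id}\circ_{\rmR}\Lambda\circ_{\rmR}\mathrm{id})$.
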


Let us give an example of a distributive law. For that, we shall define a reconnectad analogue of the Gerstenhaber operad.

\begin{definition}[Gerstenhaber reconnectad]
The reconnectad $\grCom_{H_\bullet(S^1)}$ is called the \emph{Gerstenhaber reconnectad}, and is denoted $\grGerst$.
\end{definition}

Proposition \ref{prop:relgrcom} easily implies the following result.

\begin{proposition}\label{prop:relgerst}
The Gerstenhaber reconnectad is generated by by the graphical collection $\calX$ supported at $P_1$ for which $\calX(P_1)=H_\bullet(S^1,\k)$; if we denote by $m,b\in \grGerst(\mathsf{P}_1)$ the basis elements of homological degrees $0$ and $1$ respectively corresponding to $[\mathrm{pt}]$ and $[S^1]$ respectively, a complete system of defining relations is
\begin{align}
m\circ_{1}^{P_2} m - m\circ_{2}^{P_2} m &= 0\\
b\circ_{1}^{P_2} b + b\circ_{2}^{P_2} b &= 0\\
m\circ_{1}^{P_2} b - b\circ_{2}^{P_2} m &= 0\\
b\circ_{1}^{P_2} m - m\circ_{2}^{P_2} b &= 0
\end{align}
\end{proposition}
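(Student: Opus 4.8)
The plan is to deduce this directly from Proposition~\ref{prop:relgrcom}: by definition $\grGerst=\grCom_X$ for $X=H_\bullet(S^1,\k)$, so it suffices to make that proposition explicit for this particular choice of $X$. First I would fix notation: $X$ is the graded vector space spanned by $m=[\mathrm{pt}]$ in homological degree $0$ and $b=[S^1]$ in homological degree $1$. Proposition~\ref{prop:relgrcom} then says at once that $\grGerst$ is generated by the graphical collection $\calX$ supported at $P_1$ with $\calX(P_1)=X$, subject to the single family of relations $\mu^{P_2}_{1}(x_1\otimes x_2)=\mu^{P_2}_{2}(\sigma(x_1\otimes x_2))$ with $x_1,x_2\in X$. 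So the only remaining task is to unwind what this family of relations asserts on a basis of $X\otimes X$.

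The next step is bookkeeping. Letting $(x_1,x_2)$ range over $(m,m)$, $(m,b)$, $(b,m)$, $(b,b)$ and applying the Koszul sign rule, so that $\sigma(m\otimes m)=m\otimes m$, $\sigma(m\otimes b)=b\otimes m$, $\sigma(b\otimes m)=m\otimes b$, $\sigma(b\otimes b)=-b\otimes b$, and rewriting $\mu^{P_2}_{1}$, $\mu^{P_2}_{2}$ in the infinitesimal-composition notation $\circ_{1}^{P_2}$, $\circ_{2}^{P_2}$, I would obtain the four relations
\begin{gather*}
m\circ_{1}^{P_2} m - m\circ_{2}^{P_2} m = 0,\qquad
m\circ_{1}^{P_2} b - b\circ_{2}^{P_2} m = 0,\\
b\circ_{1}^{P_2} m - m\circ_{2}^{P_2} b = 0,\qquad
b\circ_{1}^{P_2} b + b\circ_{2}^{P_2} b = 0 .
\end{gather*}
These are exactly the three relations in the statement together with one extra relation, namely the one on the second line above involving $b\circ_{1}^{P_2}m$.

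Finally I would argue that this extra relation is redundant. By the equivariance axiom of Proposition~\ref{prop:infini}, the nontrivial automorphism of $P_2$ exchanges $\circ_{1}^{P_2}$ with $\circ_{2}^{P_2}$ and exchanges the two tensor slots; applying it to the second relation of the statement yields, up to an overall sign, precisely the extra relation. Since the relations of a presentation form an $\Aut$-stable graphical subcollection, the second relation already forces the extra one, so the three displayed relations form a complete system of defining relations. The whole argument is routine; the only point demanding care is the identification of the $\mu^{\Gamma}_{V}$-notation of Proposition~\ref{prop:relgrcom} with the $\circ$-notation of the statement --- that is, correctly recording which tensor factor lies on $(P_2)^{*}_{\{i\}}$ and which on $(P_2)_{\{i\}}$ and tracking the attendant Koszul signs --- together with the equivariance reduction of the fourth relation, and I do not expect any genuine difficulty there.
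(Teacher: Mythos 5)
Your proposal is correct and follows essentially the same route as the paper: the paper also derives the presentation by specialising Proposition~\ref{prop:relgrcom} to $X=H_\bullet(S^1,\k)$, and it disposes of the apparent fourth relation $b\circ_1^{P_2}m-m\circ_2^{P_2}b$ exactly as you do, via the $\Aut(P_2)$-action on the second relation (this is spelled out in the remark immediately following the proposition). Your sign bookkeeping for $\sigma(b\otimes b)=-b\otimes b$ and the placement of tensor factors matches the paper's conventions, so no issues.
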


\begin{proof}
This follows from Proposition~\ref{prop:grEnd-monoidal} and the obvious isomorphism
 \[
H_{\bullet}(\grCom_X,\k)\cong \grCom_{H_{\bullet}(X,\k)}.
 \]
\end{proof}

Graphically the generators of $\grGerst$ and the relations between them are displayed in Figure~\ref{fig:gerstrel}. 
\begin{figure}[ht]
    \centering
    \def\svgwidth{0.7\columnwidth}
    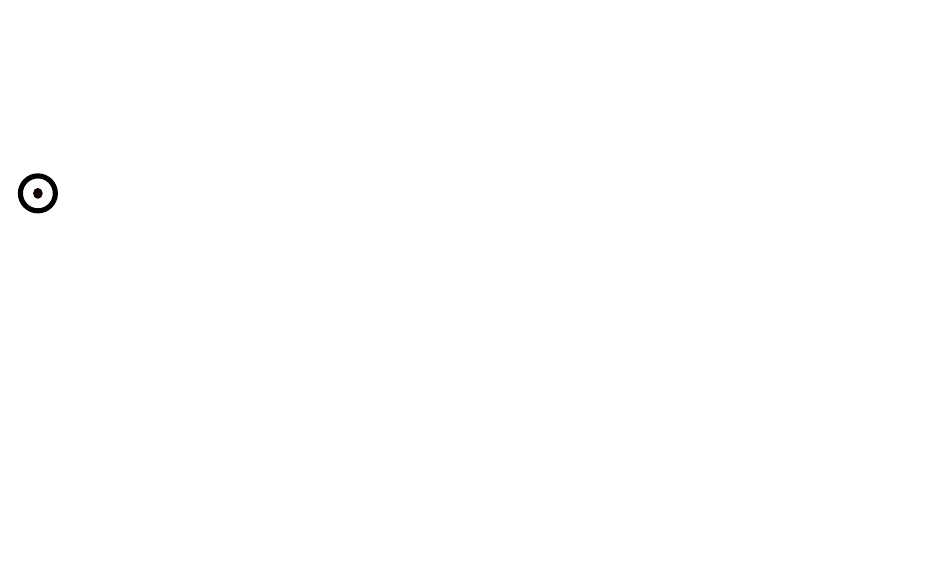
    \caption{Generators and relations for $\grGerst$}
    \label{fig:gerstrel}
\end{figure}

Let us remark that we do not have to include explicitly relations obtained by action of graph automorphisms: the action of the transposition $\sigma=(1,2)$ exchanges the third relation with the fourth one. Thus, the first three relations of Proposition \ref{prop:relgerst} are sufficient to define the reconnectad $\grGerst$. This is related to Proposition \ref{prop:NS}: restricting to path graphs more or less recovers the noncommutative Gerstenhaber operad \cite{MR4072173}, but with the additional symmetries allowing to have fewer elements in the minimal set of relations. 

\begin{proposition}\label{prop:gerst-distr}
The reconnectad $\grGerst$ is obtained from $\grCom$ and $\mathsf{S}^{-1}$ by a distributive law. In particular, the underlying graphical collection of that reconnectad permutad is isomorphic to $\grCom\circ_{\rmR}\mathsf{S}^{-1}$.
\end{proposition}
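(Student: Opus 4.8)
The plan is to realise $\grGerst$ as $\grCom\vee_{\lambda}\mathsf{S}^{-1}$ for an explicit rewriting rule $\lambda$, and then to check that the canonical surjection $\grCom\circ_{\rmR}\mathsf{S}^{-1}\twoheadrightarrow\grCom\vee_{\lambda}\mathsf{S}^{-1}$ supplied by the proposition above is an isomorphism; by that proposition this yields both the distributive law and the identification of the underlying graphical collection. The inputs are the quadratic presentations coming from Proposition~\ref{prop:relgrcom}. The reconnectad $\grCom$ is generated by a class $m$ of degree $0$ placed at $P_1$ subject to $m\circ_{1}^{P_2}m=m\circ_{2}^{P_2}m$, and the desuspension reconnectad $\mathsf{S}^{-1}=\grCom_{\k s}$ is generated by a class $b$ of degree $1$ placed at $P_1$ subject to $b\circ_{1}^{P_2}b+b\circ_{2}^{P_2}b=0$, the Koszul sign turning commutativity into anticommutativity. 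Thus the generating graphical collections $\calX$ of $\grCom$ and $\calY$ of $\mathsf{S}^{-1}$ are both supported at $P_1$, with $\calX(P_1)=\k m$ and $\calY(P_1)=\k b$, and these are exactly the degree $0$ and degree $1$ pieces of the generators $H_{\bullet}(S^1,\k)$ of $\grGerst=\grCom_{H_{\bullet}(S^1)}$.

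Because $\calX$ and $\calY$ are supported at $P_1$, both infinitesimal products $\calY\circ'_{\rmR}\calX$ and $\calX\circ'_{\rmR}\calY$ are supported at $P_2$, where they are two-dimensional, with bases $\{b\circ_{1}^{P_2}m,\,b\circ_{2}^{P_2}m\}$ and $\{m\circ_{1}^{P_2}b,\,m\circ_{2}^{P_2}b\}$ respectively. I would let $\lambda\colon\calY\circ'_{\rmR}\calX\to\calX\circ'_{\rmR}\calY$ be the degree-preserving morphism determined by $\lambda(b\circ_{1}^{P_2}m)=m\circ_{2}^{P_2}b$ and $\lambda(b\circ_{2}^{P_2}m)=m\circ_{1}^{P_2}b$; it is immediate from the equivariance axiom that $\lambda$ commutes with the nontrivial element of $\Aut(P_2)$, so it is a morphism of graphical collections. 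With this choice, $\grCom\vee_{\lambda}\mathsf{S}^{-1}$ is generated by $\calX\oplus\calY$ at $P_1$ modulo the $\Aut$-stable subcollection spanned by $m\circ_{1}^{P_2}m-m\circ_{2}^{P_2}m$, $b\circ_{1}^{P_2}b+b\circ_{2}^{P_2}b$, $b\circ_{1}^{P_2}m-m\circ_{2}^{P_2}b$ and $b\circ_{2}^{P_2}m-m\circ_{1}^{P_2}b$. The last two elements span, together with their $\Aut(P_2)$-images, the same subcollection as $m\circ_{1}^{P_2}b-b\circ_{2}^{P_2}m$, so this list of relations coincides with the list in the presentation of $\grGerst$ recorded above, and therefore $\grCom\vee_{\lambda}\mathsf{S}^{-1}\cong\grGerst$ as reconnectads.

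It then remains to show that the canonical surjection $\grCom\circ_{\rmR}\mathsf{S}^{-1}\twoheadrightarrow\grCom\vee_{\lambda}\mathsf{S}^{-1}\cong\grGerst$ is injective, which I would do by comparing graded dimensions in each arity. For every connected graph $\Delta$ we have $\grCom(\Delta)=1_{\sfC}$, concentrated in degree $0$, and $\mathsf{S}^{-1}(\Delta)=\grCom_{\k s}(\Delta)=\bigotimes_{v\in V_\Delta}\k s$, one-dimensional and concentrated in degree $|V_\Delta|$. Therefore
\[
(\grCom\circ_{\rmR}\mathsf{S}^{-1})(\Gamma)=\bigoplus_{V\subseteq V_\Gamma}\grCom(\Gamma^*_V)\otimes\bigotimes_{\Gamma'\in\mathrm{Conn}(\Gamma_V)}\mathsf{S}^{-1}(\Gamma')
\]
is a direct sum, over all $V\subseteq V_\Gamma$, of one-dimensional spaces in homological degree $|V|$, so its Poincaré series equals $\sum_{V\subseteq V_\Gamma}t^{|V|}=(1+t)^{|V_\Gamma|}$. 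On the other hand $\grGerst(\Gamma)=\grCom_{H_{\bullet}(S^1,\k)}(\Gamma)=\bigotimes_{v\in V_\Gamma}H_{\bullet}(S^1,\k)$, and since $H_{\bullet}(S^1,\k)$ has Poincaré series $1+t$, the unordered tensor product has Poincaré series $(1+t)^{|V_\Gamma|}$ as well. A surjection of graphical collections whose components are finite-dimensional and of equal graded dimension in every arity is an isomorphism, which completes the argument.

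The main obstacle is precisely the injectivity of this canonical surjection: distributive-law presentations are always valid as presentations, and the content of the statement is that no collapse occurs. Here the argument goes through cleanly because $\grCom$ and $\mathsf{S}^{-1}$ both have one-dimensional components, which makes the underlying graphical collection of $\grCom\circ_{\rmR}\mathsf{S}^{-1}$ computable by hand and comparable term by term with the homology of products of circles underlying $\grGerst$; the remaining work — guessing $\lambda$ and matching the two lists of relations — is routine bookkeeping once one keeps track of the $\Aut(P_2)$-action.
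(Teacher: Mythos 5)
Your argument is correct and follows the paper's proof essentially verbatim: you realise $\grGerst$ as $\grCom\vee_{\lambda}\mathsf{S}^{-1}$ via the rewriting rule $b\circ_1 m\mapsto m\circ_2 b$ (using the presentation from Proposition~\ref{prop:relgrcom}) and then show the canonical surjection $\grCom\circ_{\rmR}\mathsf{S}^{-1}\twoheadrightarrow\grGerst$ is an isomorphism by the dimension count $\sum_{V\subseteq V_\Gamma}1=2^{|V_\Gamma|}=\dim\grCom_{H_\bullet(S^1)}(\Gamma)$, exactly as in the paper. Your version is slightly more explicit (tracking the $\Aut(P_2)$-orbit of the rewriting rule and the graded Poincar\'e series), but the approach is the same.
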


\begin{proof}
Note that the reconnectad $\grGerst$ is obtained from $\grCom$ and $\mathsf{S}^{-1}$ by the rewriting rule 
 \[
\lambda\colon b\circ_1m \mapsto m\circ_2b,
 \]
leading to a surjection of graphical collections
 \[
\grCom \circ_{\rmR} \mathsf{S}^{-1}\twoheadrightarrow \grCom \vee_{\lambda} \mathsf{S}^{-1} \cong \grGerst.
 \]
This surjection is an isomorphism by a dimension argument. Indeed, we have 
 \[
\dim ((\grCom \circ_{\rmR} \mathsf{S}^{-1})(\Gamma))=\sum_{V\subset V_\Gamma} \dim (\grCom(\Gamma^*_{V})\otimes\mathsf{S}^{-1}(\Gamma_V))=
\sum_{V\subset V_\Gamma} 1=2^{|V_\Gamma|},
 \]
which coincides with $\dim \grGerst(\Gamma)=\dim\grCom_{H_\bullet(S^1)}(\Gamma)$. 
\end{proof}

It is well known that an operad obtained from two Koszul operads by a distributive law is Koszul. An analogous result holds for reconnectads: if $\calF$ and $\calG$ be two Koszul reconnectads, and a morphism of graphical collections
\[
    \Lambda\colon \calG \circ_{\rmR} \calF \rightarrow \calF \circ_{\rmR} \calG
\]
defines a distributive law between $\calF$ and $\calG$, then the corresponding reconnectad $\calF\vee_{\lambda}\calG$ is Koszul. Moreover, if the restriction of the canonical map $\calF\circ_{\rmR}\calG\twoheadrightarrow \calF\vee_{\lambda}\calG$ to the elements of weight three is injective, then $\Lambda$ defines a distributive law. The proofs of these results repeat \emph{mutatis mutandis} the corresponding proofs of \cite[Sec.~8.6]{MR2954392}.

\begin{corollary}
The Gerstenhaber reconnectad is Koszul.
\end{corollary}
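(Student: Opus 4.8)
The plan is to deduce the corollary directly from the distributive-law criterion recalled just above, applied to $\calF=\grCom$, $\calG=\mathsf{S}^{-1}$, and the rewriting rule $\lambda\colon b\circ_1 m\mapsto m\circ_2 b$ of Proposition~\ref{prop:gerst-distr}. Three inputs are needed: that $\grCom$ is Koszul, that $\mathsf{S}^{-1}$ is Koszul, and that $\lambda$ genuinely underlies a distributive law. Granting these, the statement that a reconnectad obtained from two Koszul reconnectads by a distributive law is Koszul gives that $\grGerst\cong\grCom\vee_\lambda\mathsf{S}^{-1}$ is Koszul, and nothing further is required.

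The first input is Proposition~\ref{prop:cobar-associahedra}. For the second, I would observe that $\mathsf{S}^{-1}$ is the Hadamard suspension of $\grCom$ — indeed $\mathsf{S}^{-1}=\mathsf{S}^{-1}\underset{H}{\otimes}\grCom$ since $\grCom=\mathbb{I}$ is the unit of the Hadamard product — and that, exactly as for operads, (de)suspension sends a Koszul reconnectad to a Koszul reconnectad: the Koszul dual coreconnectad and the Koszul twisting morphism are transported along the suspension isomorphism of Section~\ref{sec:dualities}, so the acyclicity of the pertinent twisted reconnected product is preserved. Alternatively, one can recompute $(\mathsf{S}^{-1})^{\text{!`}}$ and its cobar complex verbatim along the lines of the proof of Proposition~\ref{prop:cobar-associahedra}, since $\mathsf{S}^{-1}$ has one-dimensional components just like $\grCom$.

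For the third input I would invoke the ``moreover'' half of the criterion preceding the corollary: it suffices that the canonical surjection $\grCom\circ_{\rmR}\mathsf{S}^{-1}\twoheadrightarrow\grCom\vee_\lambda\mathsf{S}^{-1}$ be injective on elements of weight three. But Proposition~\ref{prop:gerst-distr} already proves, via the count $\dim(\grCom\circ_{\rmR}\mathsf{S}^{-1})(\Gamma)=2^{|V_\Gamma|}=\dim\grGerst(\Gamma)$, that this surjection is an isomorphism in every weight, hence in particular injective in weight three; therefore $\lambda$ underlies a distributive law $\Lambda\colon\mathsf{S}^{-1}\circ_{\rmR}\grCom\to\grCom\circ_{\rmR}\mathsf{S}^{-1}$, and the distributive-law Koszulness result applies to yield that $\grGerst$ is Koszul. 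I do not anticipate a genuine obstacle here: the only point that calls for an argument rather than a citation is the invariance of Koszulness under (de)suspension, which is a formal transport of the constructions of Section~\ref{sec:dualities} along the Hadamard suspension isomorphism, exactly as in the operadic case.
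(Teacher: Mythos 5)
Your proposal is correct and follows exactly the route the paper intends: the corollary is an immediate application of the distributive-law Koszulness criterion to $\grCom\vee_\lambda\mathsf{S}^{-1}\cong\grGerst$, with the distributive law supplied by Proposition~\ref{prop:gerst-distr} and Koszulness of $\grCom$ by Proposition~\ref{prop:cobar-associahedra}. Your extra care in justifying Koszulness of $\mathsf{S}^{-1}$ (via Hadamard suspension of $\grCom$, or a direct Gr\"obner/cobar computation) simply makes explicit a point the paper leaves implicit, and is sound.
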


\subsection{Gr\"obner bases for reconnectads}\label{sec:shuffle}

In this section, we develop the core of the theory of Gr\"obner bases for reconnectads. As above, we outline the necessary statements; most of the proofs are \emph{mutatis mutandis} those of \cite[Sec.~8.2]{MR2954392}. The main idea is to create a framework in which there is sufficiently many reconnectads with monomial relations. This is not the case so far: due to the presence of graph automorphisms, vanishing of a monomial forces ``unexpected'' vanishing of other monomials. To avoid that, we shall consider a version of graphical collections where we use connected graphs equipped with the additional data of a total order on the set of vertices: this eliminates automorphisms from the picture, since an automorphism is required to preserve all structures, including the order.  
We shall call those nonsymmetric graphical collections. The shuffle reconnected product of two nonsymmetric graphical collections is defined by the formula
  \[
(\calF\circ_{\Sha}\calG)(\Gamma):=\calF(\Gamma)\oplus\bigoplus_{\emptyset\neq V\subset V_\Gamma}\calF(\Gamma^*_V)\otimes
\bigotimes_{\substack{\mathrm{Conn}(\Gamma_V)=\{\Gamma_1,\ldots,\Gamma_s\}\\ \min (V_{\Gamma_1})<\cdots<\min (V_{\Gamma_s})}} \calG(\Gamma_1)\otimes\cdots\calG(\Gamma_s).
 \]
This operation can be easily shown to give the category of nonsymmetric graphical collections a structure of a monoidal category. By definition, a \emph{shuffle reconnectad} is a monoid in that monoidal category. 

The importance of the shuffle reconnected product comes from its compatibility with the forgetful functor $\Gamma \mapsto \Gamma^{\mathrm{f}}$ from the groupoid of connected graphs with a total order on the set of vertices to the groupoid of connected graphs; that functor literally forgets the total order of the set of vertices of each graph. This functor defines a forgetful functor from graphical collections to nonsymmetric graphical collections given by 
 \[
\calF^{\mathrm{f}}(\Gamma)=\calF(\Gamma^{\mathrm{f}}).
 \]  
Moreover, there is an obvious analogue of the shuffle nested set monad, and one can use it to define the free shuffle reconnectad $\calN_{\Sha}(\calX)$ generated by a nonsymmetric graphical collection $\calX$. Using that notion, one recovers all other aspects of the reconnectad theory in the shuffle case: it is possible to use presentations by generators and relations, the (co)bar construction, Koszul duality etc.

\begin{proposition}\label{prop:forget-monoidal}
The forgetful functor from nonsymmetric graphical collections to graphical collections is monoidal: we have
 \[
(\calF\circ_{\rmR}\calG)^{\mathrm{f}}\cong \calF^{\mathrm{f}}\circ_{\Sha}\calG^{\mathrm{f}}.
 \]
\end{proposition}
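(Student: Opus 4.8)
The plan is to establish the isomorphism summand by summand and then observe that it is natural and compatible with the two monoidal structures. Fix a connected graph $\Gamma$ with a total order on its vertex set $V_\Gamma$. Unwinding the definitions of the forgetful functor and of the reconnected product, one has
\[
(\calF\circ_{\rmR}\calG)^{\mathrm{f}}(\Gamma)=(\calF\circ_{\rmR}\calG)(\Gamma^{\mathrm{f}})=\bigoplus_{V\subset V_\Gamma}\calF\big((\Gamma^{\mathrm{f}})^*_V\big)\otimes\bigotimes_{\Gamma'\in\mathrm{Conn}((\Gamma^{\mathrm{f}})_V)}\calG(\Gamma'),
\]
whereas, absorbing the explicit summand $\calF^{\mathrm{f}}(\Gamma)$ into the term $V=\emptyset$ (for which $\Gamma^*_\emptyset=\Gamma$ and $\mathrm{Conn}(\Gamma_\emptyset)=\emptyset$), one has
\[
(\calF^{\mathrm{f}}\circ_{\Sha}\calG^{\mathrm{f}})(\Gamma)=\bigoplus_{V\subset V_\Gamma}\calF^{\mathrm{f}}(\Gamma^*_V)\otimes\bigotimes_{\substack{\mathrm{Conn}(\Gamma_V)=\{\Gamma_1,\ldots,\Gamma_s\}\\ \min(V_{\Gamma_1})<\cdots<\min(V_{\Gamma_s})}}\calG^{\mathrm{f}}(\Gamma_1)\otimes\cdots\otimes\calG^{\mathrm{f}}(\Gamma_s).
\]
So both sides are direct sums over the same index set $\{V\colon V\subset V_\Gamma\}$, and it is enough to produce, naturally in $\Gamma$, an isomorphism between the $V$-th summands.

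First I would note that forming the reconnected complement $\Gamma^*_V$ and the induced subgraph $\Gamma_V$ only involves the vertex and edge data, hence commutes with forgetting the order: $(\Gamma^{\mathrm{f}})^*_V$ is the underlying graph of the ordered graph $\Gamma^*_V$ (ordered via $V_\Gamma\setminus V$), and similarly for $\Gamma_V$; in particular the connected components of $(\Gamma^{\mathrm{f}})_V$ are precisely the underlying graphs of the ordered components $\Gamma_1,\ldots,\Gamma_s$ of $\Gamma_V$. Consequently $\calF\big((\Gamma^{\mathrm{f}})^*_V\big)=\calF^{\mathrm{f}}(\Gamma^*_V)$ on the nose, and the factors $\calG(\Gamma')$ appearing on the left are exactly the factors $\calG^{\mathrm{f}}(\Gamma_i)$ appearing on the right, only grouped in an unordered rather than an ordered tensor product. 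Since the vertex sets of the components $\Gamma_i$ are pairwise disjoint subsets of the totally ordered set $V_\Gamma$, the rule ``list the components in increasing order of their least vertex'' is a well-defined total order on the finite set $\mathrm{Conn}(\Gamma_V)$; by the definition of the unordered monoidal product recalled in Section~\ref{sec:recollections}, choosing a total order on the indexing set gives a canonical isomorphism from $\bigotimes_{\Gamma'\in\mathrm{Conn}(\Gamma_V)}\calG(\Gamma')$ to the ordered product $\calG(\Gamma_1)\otimes\cdots\otimes\calG(\Gamma_s)$. Taking the identity on the $\calF$-factor and this canonical isomorphism on the $\calG$-factor identifies the $V$-th summands, and the direct sum over $V$ gives the desired isomorphism $(\calF\circ_{\rmR}\calG)^{\mathrm{f}}\cong\calF^{\mathrm{f}}\circ_{\Sha}\calG^{\mathrm{f}}$.

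Naturality in $\Gamma$ should be immediate: a morphism in the groupoid of totally ordered connected graphs is the unique order-preserving bijection of vertex sets (when it happens to be a graph isomorphism), so it preserves least elements of all subsets and hence intertwines the two ``least vertex'' trivialisations. To see that the forgetful functor is monoidal in the full sense, it then remains to match units — $\mathbbold{1}^{\mathrm{f}}$ is supported on $\emptyset$ with value $1_{\sfC}$, which is exactly the unit of $\circ_{\Sha}$ — and to check compatibility with associators. Expanding an iterated reconnected product as in the proof of Proposition~\ref{prop:assoc}, the triple products $(\calF\circ_{\rmR}\calG\circ_{\rmR}\calH)^{\mathrm{f}}$ and $\calF^{\mathrm{f}}\circ_{\Sha}\calG^{\mathrm{f}}\circ_{\Sha}\calH^{\mathrm{f}}$ are both indexed by chains $U\subset V\subset V_\Gamma$ with identical tensor factors, and the associativity square commutes provided the ``least vertex'' ordering convention is consistent with iterating the construction — which holds because passing to a connected component or to a reconnected complement does not change which vertex of a given component is smallest. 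The one point that genuinely needs care is precisely this last consistency check: one must fix the ``least vertex'' trivialisation of the unordered products once and for all and verify that this single choice is simultaneously compatible with the associativity constraints of $\circ_{\rmR}$ and of $\circ_{\Sha}$; granting that, all coherence diagrams commute because the corresponding ones do for $\circ_{\rmR}$, and the rest of the argument is a routine unwinding of definitions.
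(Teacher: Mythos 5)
Your proposal is correct and matches the paper, whose entire proof is the phrase ``direct inspection'': you have simply carried out that inspection, identifying the $V$-indexed summands, using the least-vertex trivialisation of the unordered tensor product, and noting compatibility with units and associators. No discrepancy to report.
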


\begin{proof}
The symmetric group acts freely on the unordered tensor product over the connected components; the shuffle product corresponds to one of the ways 
to choose a representative of each orbit (which, in the case of shuffle graphical collections, is canonical).
\end{proof}

An immediate consequence of Proposition \ref{prop:forget-monoidal} is the following result. 

\begin{corollary}\label{cor:forget}\leavevmode
\begin{enumerate}
\item For every reconnectad $\calF$, the nonsymmetric collection $\calF^{\mathrm{f}}$ has a natural structure of a shuffle reconnectad obtained from that of $\calF$.
\item For every graphical collection $\calX$, we have an isomorphism of shuffle reconnectads
 \[
\calN(\calX)^{\mathrm{f}}\cong\calN_{\Sha}(\calX^{\mathrm{f}}). 
 \]
\item For every graphical collection $\calX$ and every graphical subcollection $\calR\subset\calN(\calX)$, we have an isomorphism of shuffle reconnectads
 \[
(\calN(\calX)/\langle\calR\rangle)^{\mathrm{f}}\cong \calN_{\Sha}(\calX^{\mathrm{f}})/\langle\calR^{\mathrm{f}}\rangle).
 \]
\item For every reconnectad $\calF$, we have an isomorphism of dg shuffle coreconnectads
 \[
\mathsf{B}(\calF)^{\mathrm{f}}\cong\mathsf{B}_{\Sha}(\calF^{\mathrm{f}}).
 \] 
\item A reconnectad $\calF$ presented by generators and quadratic relations is Koszul if and only if the associated shuffle reconnectad $\calF^{\mathrm{f}}$ is Koszul.
\end{enumerate}
\end{corollary}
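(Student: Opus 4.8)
The plan is to deduce all five statements formally from Proposition~\ref{prop:forget-monoidal}, which exhibits $(-)^{\mathrm f}$ as a strong monoidal functor from $(\GrCol_{\sfC},\circ_{\rmR})$ to the category of nonsymmetric graphical collections with $\circ_{\Sha}$, together with the following elementary observations. Being restriction along the forgetful functor $\Gamma\mapsto\Gamma^{\mathrm f}$ on groupoids, $(-)^{\mathrm f}$ preserves all limits and colimits, computed componentwise; it does not alter the underlying chain complexes $\calF(\Gamma^{\mathrm f})$, so it commutes with homology; and hence a complex of graphical collections $\calC$ is acyclic if and only if $\calC^{\mathrm f}$ is, since every connected graph is of the form $\Gamma^{\mathrm f}$ for some connected graph $\Gamma$ carrying a total order on $V_\Gamma$.

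Granting this, the rest is bookkeeping. For~(1): a strong monoidal functor carries monoids to monoids, so the $\circ_{\rmR}$-monoid structure of the reconnectad $\calF$ transports to a $\circ_{\Sha}$-monoid structure on $\calF^{\mathrm f}$, that is, a shuffle reconnectad structure. For~(2): the free monoid on an object of a monoidal category admitting the requisite colimits is a colimit of iterated tensor powers (the construction $\calN$ of the excerpt being one model), and a cocontinuous strong monoidal functor intertwines this construction; hence $\calN(\calX)^{\mathrm f}\cong\calN_{\Sha}(\calX^{\mathrm f})$ as monoids, i.e. as shuffle reconnectads. For~(3): an ideal is defined purely through the structure maps $\mu_V^\Gamma$, and the quotient $\calN(\calX)/\langle\calR\rangle$ is a coequalizer, so applying the cocontinuous monoidal functor $(-)^{\mathrm f}$ and invoking~(2) identifies it with $\calN_{\Sha}(\calX^{\mathrm f})/\langle\calR^{\mathrm f}\rangle$.

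For~(4): since $\calF(\emptyset)=1_{\sfC}$, the cofree coreconnectad $\calN^c(\calV)$ has the same underlying graphical collection as the free reconnectad $\calN(\calV)$ (only the universal structure maps differ), so by~(2) the underlying collection of $\mathsf{B}(\calF)^{\mathrm f}$ agrees with that of the cofree shuffle coreconnectad on $s\calF^{\mathrm f}$; the coderivations $d_1,d_2$ of the excerpt are built from the differential of $\calF$ and the maps $\mu_T^\Gamma$, all intertwined by the dg monoidal functor $(-)^{\mathrm f}$, which upgrades this to an isomorphism of dg shuffle coreconnectads $\mathsf{B}(\calF)^{\mathrm f}\cong\mathsf{B}_{\Sha}(\calF^{\mathrm f})$. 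For~(5): the Koszul dual coreconnectad $\calF^{\text{!`}}$, with cogenerators $s\calX$ and corelations $s^2\calR$, is assembled from $\calN^c$, the Hadamard suspension $\mathsf{S}$, and the formation of a subobject, all compatible with $(-)^{\mathrm f}$; hence $(\calF^{\text{!`}})^{\mathrm f}\cong(\calF^{\mathrm f})^{\text{!`}}$, and likewise the Koszul twisting morphism and the twisted reconnected product $\calF^{\text{!`}}\circ^{\kappa}_{\rmR}\calF$ pass to their shuffle analogues under $(-)^{\mathrm f}$. As $\calF$ is Koszul exactly when this twisted product is acyclic, and the same holds on the shuffle side, the acyclicity observation above yields the desired equivalence.

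The only genuine content beyond Proposition~\ref{prop:forget-monoidal} is checking that every construction used---free monoid, ideal quotient, cofree comonoid, bar complex, Koszul dual, twisting morphism, twisted product---is assembled from componentwise (co)limits and the monoidal structure alone, so that a cocontinuous strong monoidal functor automatically intertwines all of them; I expect the one mildly delicate point to be the identification of the underlying collections of $\calN^c(\calV)$ and of the cofree shuffle coreconnectad on $\calV^{\mathrm f}$, which rests on the ``cofree equals free on underlying objects'' phenomenon that holds in this connected setting.
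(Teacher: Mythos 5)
Your argument is correct and is essentially the paper's own: the paper derives this corollary as an immediate consequence of Proposition~\ref{prop:forget-monoidal}, exactly as you do, with your extra paragraphs merely spelling out the routine bookkeeping (monoids go to monoids, free/cofree objects and quotients are preserved by the cocontinuous strong monoidal functor, and acyclicity is detected componentwise since $(-)^{\mathrm f}$ does not change the underlying complexes). The one point you flag as delicate — that $\calN^c(s\calF)$ and $\calN(s\calF)$ share the same underlying graphical collection in this connected setting — is indeed the identification the paper also uses implicitly in its bar construction, so nothing is missing.
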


Suppose that the given nonsymmetric graphical collection $\calX$ with values in vector spaces is the linearisation of a nonsymmetric graphical collection $\calB$ with values in sets; in other words, $\calB(\Gamma)$ is a functorial choice of a basis in $\calX(\Gamma)$. Then the free shuffle reconnectad $\calN_{\Sha}(\calX)$ is the linearisation of the free shuffle reconnectad $\calN_{\Sha}(\calB)$, and we can talk about monomials $\calN_{\Sha}(\calX)$. We say that a monomial $m$ is \emph{divisible} by another monomial $m'$ if it can be obtained from $m'$ by iteration of structure operations. A collection of total well orders of all sets $\calN_{\Sha}(\calB)(\Gamma)$ is said to be a \emph{monomial ordering} if each structure operation of $\calN_{\Sha}(\calB)$ is strictly increasing in each argument. 

Let us give an example of a monomial ordering, which is a particular case of a more general ordering introduced by Coron \cite{CoronLattices}. We shall consider the free shuffle reconnectad with one generator in each ``arity'', so that the basis of each component $\calN_{\Sha}(\calX)(\Gamma)$ is indexed by $N^+(\Gamma)$. 

As a preparation, we shall define an order on subsets of $V_{\Gamma}$. For two such subsets $V=\{v_1,\ldots,v_r\}$ with $v_1<\cdots<v_r$ and $W=\{w_1,\ldots,w_s\}$ with $w_1<\cdots<w_s$, we shall say that $V$ precedes $W$ and write $V\prec W$ if either the sequence $(v_1,\ldots,v_r)$ coincides with an initial segment of the sequence $(w_1,\ldots,w_s)$ or the sequence $(v_1,\ldots,v_r)$ is lexicographically greater than the sequence $(w_1,\ldots,w_s)$. Note that this order extends the existing order on $2^{V_\Gamma}$: $V\subset W$ implies that $V\prec W$. 

When we pass to induced subgraphs $\Gamma_V$ and reconnected complements $\Gamma^*_V$, there are two possible way to proceed. One can either induce the total orders on the sets of vertices of these graphs from the total order on $V_\Gamma$ and then define the relation $V\prec W$, or directly induce the relation $V\prec W$ from $2^{V_\Gamma}$. By a direct inspection, these two recipes give the same result. 

We define the \emph{lexicographic} ordering $\triangleleft$ allowing to compare two nested sets of the same cardinality (not necessarily in $N^+(\Gamma)$) as follows. For two nested sets $\tau_1,\tau_2$, we say that $\tau_1$ is \emph{lexicographically smaller} than $\tau_2$, and write $\tau_1 \triangleleft \tau_2$, if $\bigcup_{T\in \tau_1} T \prec \bigcup_{T\in \tau_2} T$ or if $\bigcup_{T\in \tau_1} T = \bigcup_{T\in \tau_2} T$ and 
 \[
\bigcup_{T\in \tau_1} T\setminus \{\max\nolimits_{\prec} \tau_1\} \prec \bigcup_{T\in \tau_2} T\setminus \{\max\nolimits_{\prec} \tau_2\}.
 \]
 
\begin{proposition}[{\cite[Sec.~5.3]{CoronLattices}}]
The ordering of monomials $\calN_{\Sha}(\calX)(\Gamma)$ that compares the corresponding elements of $N^+(\Gamma)$ using the ordering $\triangleleft$  is compatible with reconnectad structure.
\end{proposition}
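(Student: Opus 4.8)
The plan is to reduce the statement to showing that each binary structure operation $\circ^\Gamma_T$ of the free shuffle reconnectad $\calN_{\Sha}(\calX)$, $\calX=\grCom^{\mathrm f}$, is strictly $\triangleleft$-increasing in each of its two arguments: every structure operation of $\calN_{\Sha}(\calX)$ is a composite of operations of this form, and a composite of strictly increasing maps is strictly increasing. A monomial in $\calN_{\Sha}(\calX)(\Gamma)$ is an element of $N^+(\Gamma)$, its weight is its cardinality, and --- unwinding the shuffle reconnected product --- for $T\in G_\Gamma$ the operation $\circ^\Gamma_T$ sends a pair $(\tau_0,\sigma)$ with $\tau_0\in N^+(\Gamma^*_T)$ and $\sigma\in N^+(\Gamma_T)$ to
\[
\circ^\Gamma_T(\tau_0,\sigma)=\{\widehat S\colon S\in\tau_0\}\cup\sigma\in N^+(\Gamma),
\]
where $\widehat S$ is the connected component of $\Gamma_{S\cup T}$ containing $S$ (the smallest tube of $\Gamma$ with $\widehat S\setminus T=S$); here $\widehat S\setminus T=S$, the map $S\mapsto\widehat S$ is injective, $\widehat{V_\Gamma\setminus T}=V_\Gamma$, and every tube of $\sigma$ is contained in $T$. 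Extending $\triangleleft$ to nested sets of different cardinalities by the obvious recursion, with a shorter staircase that is an initial segment preceding, all the comparisons we need are meaningful.

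Next I would reformulate $\triangleleft$. For $\tau\in N^+(\Gamma)$, list the tubes of $\tau$ in strictly decreasing $\prec$-order as $T_1\succ\cdots\succ T_k$ (so $T_1=V_\Gamma$) and set $U_j(\tau):=T_j\cup\cdots\cup T_k$; unravelling the recursion in the definition, $\tau\triangleleft\tau'$ holds exactly when $(U_1(\tau),U_2(\tau),\dots)$ is lexicographically $\prec$-smaller than $(U_1(\tau'),U_2(\tau'),\dots)$. It is convenient to observe that $\prec$ is simply the lexicographic order on indicator vectors: for distinct $A,B\subseteq V_\Gamma$ one has $A\prec B$ if and only if the least element of $A\triangle B$ lies in $B$. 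From this, besides the compatibility of $\prec$ with passing to $\Gamma_V$ and $\Gamma^*_V$ noted before the definition of $\triangleleft$, I would extract the two facts used below: (i) $A\subseteq B$ implies $A\preceq B$; (ii) if $C$ is disjoint from both $A$ and $B$, then $A\prec B$ if and only if $A\cup C\prec B\cup C$.

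The monotonicity is then verified directly. Replacing the inner argument $\sigma$ by $\sigma'$ with $\sigma\triangleleft\sigma'$ changes the output only within the block of tubes contained in $T$, which is embedded order-isomorphically into $(2^{V_\Gamma},\prec)$; at the first level where the two output staircases diverge one compares two sets that differ only inside $T$ by exactly the discrepancy witnessing $\sigma\triangleleft\sigma'$, with the same vertices outside $T$ adjoined to both, so (ii) gives the conclusion. Replacing the outer argument $\tau_0$ by $\tau_0'$ with $\tau_0\triangleleft\tau_0'$ is the substantial case. A clean prototype is the identity $U_2(\circ^\Gamma_T(\tau_0,\sigma))=U_2(\tau_0)\cup T$, which together with (ii) already disposes of the subcase where the staircases of $\tau_0$ and $\tau_0'$ first diverge at level $2$; in general one runs the removal process on the output, observing that the part of each $U_j$ lying outside $T$ is always a union of tubes of $\tau_0$ while the part lying inside $T$ is governed by which sub-tubes of $T$ are already enclosed and coincides for the two outputs up to the first point of divergence, so that the comparison is again controlled by (ii) and the compatibility of $\prec$ with the reconnected complement.

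The hard part is this last step. Because the lift $S\mapsto\widehat S$ need not preserve $\prec$ on individual tubes --- adjoining vertices of $T$ can reverse a $\prec$-comparison --- one cannot argue tube by tube, and must instead control how the lifted tubes $\widehat S$ interleave, in the $\prec$-order, with the fixed block coming from $\sigma$, and confirm that this interleaving does not move the first level at which the staircases of $\tau_0$ and $\tau_0'$ differ. Once this bookkeeping is done, the inequality $\circ^\Gamma_T(\tau_0,\sigma)\triangleleft\circ^\Gamma_T(\tau_0',\sigma)$ follows, and everything else --- the explicit form of the operation, the reformulation of $\triangleleft$, and the elementary properties of $\prec$ --- is routine.
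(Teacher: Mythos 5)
Your setup is largely correct: the reduction to the binary operations $\circ^\Gamma_T$, the explicit description of their effect on monomials (the composite nested set is $\sigma$ together with the lifts $\widehat S$, where $\widehat S=S\cup T$ when $S$ is adjacent to $T$ in $\Gamma$ and $\widehat S=S$ otherwise), the staircase reformulation of $\triangleleft$, and the two properties of $\prec$ all check out, and you correctly locate the difficulty in the fact that lifting does not preserve $\prec$-comparisons. One slip: the parenthetical gloss of $\widehat S$ as ``the smallest tube of $\Gamma$ with $\widehat S\setminus T=S$'' is false --- for $\Gamma=P_3$, $T=\{2,3\}$, $S=\{1\}$ the smallest such tube is $\{1,2\}$, while the correct lift (the one forced by compatibility with $T$ in the composite nested set) is $\{1,2,3\}$; only the component-of-$\Gamma_{S\cup T}$ description should be used.

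The genuine gap is that the monotonicity itself --- which is the entire content of the proposition --- is never established. For the outer argument you say one ``must control how the lifted tubes interleave'' with the $\sigma$-block and that ``once this bookkeeping is done'' the inequality follows; but that bookkeeping is the proof, and it is not routine: since the relative $\prec$-order of the lifts can differ from that of the tubes of $\tau_0$, and since at a given stage the $\prec$-maximal remaining tube of $L(\tau_0)\cup\sigma$ may be a lifted tube while that of $L(\tau_0')\cup\sigma$ is a $\sigma$-tube, the first level at which the output staircases diverge is not visibly inherited from the divergence of $\tau_0$ and $\tau_0'$. The inner case has the same defect in milder form: your assertion that at the first divergence one compares sets ``that differ only inside $T$ by exactly the discrepancy witnessing $\sigma\triangleleft\sigma'$, with the same vertices outside $T$ adjoined'' is precisely what needs proof, because the common lifted tubes can occupy different positions in the two $\prec$-sorted lists, so the sets of tubes removed before the divergence need not match up a priori. (A smaller loose end: monomials of one component have different weights, so the extension of $\triangleleft$ across cardinalities, which you only mention in passing, must be fixed and checked as well.) Note that the paper declares this proposition straightforward and omits its proof, so there is nothing to fall back on: what you have is a correct plan plus the easy preliminaries, and the missing lemma --- an identification, level by level, of the staircase of $\circ^\Gamma_T(\tau_0,\sigma)$ in terms of the staircases of $\tau_0$ and $\sigma$ (equivalently, control of which tubes have been removed after $j$ steps) --- still has to be stated and proved before the conclusion can be drawn.
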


To define orderings of monomials of arbitrary free shuffle reconnectads, one has to blend the lexicographic ordering with an ordering of words in basis elements, mimicking the strategy of \cite{MR4114993}. We leave the precise definition as an exercice to the reader who wishes to consider reconnectads where such a definition is necessary. 

Let us fix a certain monomial ordering; this allows us to talk about leading terms of elements of the free shuffle reconnectad. Suppose that $\calI$ is an ideal of the free shuffle reconnectad $\calN_{\Sha}(\calX)$ We say that a collection of subsets $\calG(\Gamma)\subset\calI(\Gamma)$ is a \emph{Gr\"obner basis} of the ideal $\calI$, if every leading monomial of each element of $\calI$ is divisible by a leading term of one of the elements of $\calG$. 

Let us define \emph{normal monomials} with respect to a collection of subsets $\calS(\Gamma)\subset\calI(\Gamma)$ as monomials that are not divisible by leading monomials of elements of $\calS$. The following simple but important observation is proved analogously to all other known instances of Gr\"obner bases \cite{MR3642294}.

\begin{proposition}\label{prop:normal} Let $\calX$ be a nonsymmetric graphical collection, and suppose that a collection of subsets $\calG(\Gamma)\subset\calI(\Gamma)$ generates $\calI$ as an ideal in the free shuffle reconnectad $\calN_{\Sha}(\calX)$.
\begin{enumerate}
\item Cosets of normal monomials with respect to $\calG$ form a spanning set in the quotient reconnectad $\calN_{\Sha}(\calX)/\calI$.
\item The collection $\calG$ is a Gr\"obner basis of $\calI$ if and only if the cosets of normal monomials with respect to $\calG$ form a basis in the quotient reconnectad $\calN_{\Sha}(\calX)/\calI$. 
\end{enumerate}
\end{proposition}

The following key result is proved analogously to the corresponding result for shuffle operads \cite{MR3084563}.

\begin{proposition}\label{prop:shuffleKoszul}\leavevmode
\begin{enumerate}
\item A shuffle reconnectad with quadratic monomial relations is Koszul.
\item A shuffle reconnectad with a quadratic Gr\"obner basis of relations (for some monomial ordering) is Koszul.
\end{enumerate}
\end{proposition}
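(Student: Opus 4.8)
The plan is to handle the two statements in turn, the first directly and the second by reduction to the first, transcribing the template used for shuffle operads in \cite[Sec.~8.2]{MR2954392} and \cite{MR3084563}. Throughout I will use the Koszul duality package for shuffle reconnectads made available by Corollary~\ref{cor:forget} together with the remarks at the end of Section~\ref{sec:dualities}: a quadratic shuffle reconnectad $\calF$ with quadratic data $(\calX,\calR)$ is Koszul precisely when the Koszul complex $\calF^{\text{!`}}\circ^{\kappa}_{\Sha}\calF$, with the differential twisted by the canonical twisting morphism $\kappa\colon\calF^{\text{!`}}\to\calF$, is acyclic.

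For part (1), suppose $\calR(\Gamma)$ is spanned for every $\Gamma$ by a set of weight-two monomials of the free shuffle reconnectad $\calN_{\Sha}(\calX)$. First, the Koszul dual coreconnectad is again monomial: the corelations $s^{2}\calR$ span a coordinate subspace of the weight-two part of $\calN_{\Sha}^{c}(s\calX)$, so $\calF^{\text{!`}}$ has, in each arity, the monomial basis consisting of those monomials all of whose weight-two sub-monomials lie in $\calR$, while $\calF$ itself has the basis of $\calR$-normal monomials. Consequently $(\calF^{\text{!`}}\circ_{\Sha}\calF)(\Gamma)$ has a monomial basis: a basis element is a subset $V\subseteq V_{\Gamma}$, a $\calF^{\text{!`}}$-monomial on $\Gamma^{*}_{V}$, and an $\calR$-normal $\calF$-monomial on each connected component of $\Gamma_{V}$, and grafting these produces a well-defined monomial $\mu$ of $\calN_{\Sha}(\calX)(\Gamma)$. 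Because $\kappa$ only ever carries one bottom generator layer of the $\calF^{\text{!`}}$-part across the cut separating it from the $\calF$-part, it fixes $\mu$; hence in each arity the Koszul complex splits as $\bigoplus_{\mu}C_{\mu}$, the sum being over monomials $\mu\in\calN_{\Sha}(\calX)(\Gamma)$. It then remains to see that each $C_{\mu}$ is acyclic. A basis element of $C_{\mu}$ encodes a partition of the generator layers of the decorated nested-set tree $\mathbb{T}_{\mu}$ into an ``upper'' part --- all of whose weight-two sub-monomials lie in $\calR$ --- and a downward-closed ``lower'' part that must be $\calR$-normal, with $\kappa$ pushing layers from the upper part into the lower part one at a time. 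An elementary layer-by-layer analysis of $\mathbb{T}_{\mu}$ (along each branch and each parallel pair one finds at most two admissible placements of a layer) shows that $C_{\mu}$ is a tensor product of two-term complexes $\k\xrightarrow{\ \sim\ }\k$ and zero complexes, hence is acyclic (for $\Gamma\neq\emptyset$; for $\Gamma=\emptyset$ the Koszul complex reduces to $1_{\sfC}$ concentrated in degree zero, as required). Summing over $\mu$ gives acyclicity of $\calF^{\text{!`}}\circ^{\kappa}_{\Sha}\calF$, so $\calF$ is Koszul.

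For part (2), let $\calF=\calN_{\Sha}(\calX)/\langle\calR\rangle$ admit a quadratic Gr\"obner basis $\calG$ for some monomial ordering. By the preceding proposition the $\calG$-normal monomials form a basis of $\calF$, so the leading-monomial filtration of $\calN_{\Sha}(\calX)$ descends to a filtration of $\calF$ whose associated graded is the quadratic monomial shuffle reconnectad $\mathrm{gr}\,\calF:=\calN_{\Sha}(\calX)/\langle\mathrm{lead}(\calG)\rangle$. This filtration is compatible with the structure maps of $\calF$, so it induces a filtration on the bar construction $\mathsf{B}_{\Sha}(\calF)$ with $\mathrm{gr}\,\mathsf{B}_{\Sha}(\calF)\cong\mathsf{B}_{\Sha}(\mathrm{gr}\,\calF)$; as everything is finitely generated over $\k$ in fixed arity and weight, the filtration is bounded there and yields a convergent spectral sequence with first page $H_{\bullet}(\mathsf{B}_{\Sha}(\mathrm{gr}\,\calF))$ abutting to $H_{\bullet}(\mathsf{B}_{\Sha}(\calF))$. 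By part (1) the reconnectad $\mathrm{gr}\,\calF$ is Koszul, so $H_{\bullet}(\mathsf{B}_{\Sha}(\mathrm{gr}\,\calF))$ is concentrated in the diagonal bidegrees where the homological degree equals the weight; since each page of the spectral sequence is a subquotient of the previous one, the abutment is concentrated there as well, which is exactly the statement that $\calF$ is Koszul.

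The main obstacle is the combinatorial set-up behind part (1): one must make precise, for the free shuffle reconnectad $\calN_{\Sha}(\calX)$, the notions of monomial, of divisibility (via iterated $\Gamma_{V}$ and $\Gamma^{*}_{V}$ operations rather than tree grafting), and of the overlap of two quadratic monomials, sharply enough that the twisted differential visibly preserves $\mu$ and each $C_{\mu}$ is visibly a tensor product of elementary complexes. This amounts to spelling out the dictionary between monomials of $\calN_{\Sha}(\calX)$ and decorated nested sets $\mathbb{T}_{\tau}$ --- essentially the dictionary already used to define the ordering $\triangleleft$ above --- after which both parts become faithful transcriptions of the shuffle-operad arguments of \cite{MR3084563,MR2954392}. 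For part (2) the Gr\"obner hypothesis enters only through the identification of $\mathrm{gr}\,\calF$ as a quadratic monomial reconnectad; the remainder is the standard spectral-sequence comparison.
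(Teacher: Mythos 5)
Your proposal takes essentially the same route as the paper, which proves this proposition simply by transcribing the shuffle-operad argument of \cite{MR3084563} (cf.\ also \cite[Sec.~8.2]{MR2954392}): acyclicity of the Koszul complex in the monomial case via its splitting over monomials of the free shuffle reconnectad, and then the leading-term filtration with its spectral sequence to reduce the Gr\"obner-basis case to the monomial one. Your write-up is a correct (if, for part~(1), somewhat sketchy in the verification that each summand $C_\mu$ is acyclic) rendition of exactly that argument, so there is nothing to add beyond the paper's own citation-level proof.
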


This result is absolutely fundamental, since Corollary \ref{cor:forget} implies that to prove that a reconnectad $\calF$ is Koszul, it is sufficient to establish that the associated shuffle reconnectad $\calF^{\mathrm{f}}$ is Koszul, and we now know that for that it is enough to exhibit a quadratic Gr\"obner basis.

\begin{example}\label{ex:grCom}
Let us consider the free shuffle reconnectad in vector spaces generated by the graphical collection $\calX$ supported at $P_1$ with $\calX(P_1)=\k m$. For the restriction of the lexicographic ordering to this case, the ideal of relations of the shuffle reconnectad $\grCom^{\mathrm{f}}$ admits a quadratic Grobner basis with leading term $m\circ_{1}^{\mathsf{P}_2}m$. Indeed, let us define, for each graph $\Gamma$, an element $m_\Gamma \in \grCom^{\mathrm{f}}(\Gamma)$ by the inductive rule 
 \[
m_\Gamma = m_{\Gamma^*_{V_\Gamma\setminus\{\max(V_\Gamma)\}}}\circ_{\max(V_\Gamma)}^{\Gamma} m_{\max(V_\Gamma)},
 \]
which in plain words means that we disassemble each graph starting from its maximal vertex. The element $m_\Gamma$ forms a basis of the one-dimensional component $\grCom^{\mathrm{f}}(\Gamma)$, and it is the only normal form with respect 
to the leading term $m\circ_{1}^{\mathsf{P}_2}m$. Thus, Propositions \ref{prop:normal} and \ref{prop:shuffleKoszul} together with Corollary \ref{cor:forget} imply that the reconnectad $\grCom$ is Koszul, giving an alternative short proof of the result of Proposition \ref{prop:cobar-associahedra}.
\end{example}

Let us also record the following useful observation proved in the same way as the analogous result for associative algebras \cite[Th.~4.1]{MR2177131} and shuffle operads \cite[Th.~5.5]{MR2574993}.

\begin{proposition}\label{prop:dualPBW}
A shuffle reconnectad $\calF$ has a quadratic Gr\"obner basis for a certain monomial ordering if and only is the Koszul dual shuffle reconnectad $\calF^!$ has a quadratic Gr\"obner basis for the opposite monomial ordering.  
\end{proposition}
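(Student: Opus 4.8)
The plan is to transplant the argument that works for associative algebras and shuffle operads, identifying the only place where something specific to reconnectads has to be checked. First I would recall that the statement is really about the quadratic data: write $\calF=\calN_{\Sha}(\calX)/\langle\calR\rangle$ with $\calR\subset\calN_{\Sha}(\calX)^{(2)}$, and $\calF^!=\calN_{\Sha}(s^{-1}\mathsf{S}^{-1}\calX^*)/\langle\calR^\bot\rangle$, where the weight-two component $\calN_{\Sha}(\calX)^{(2)}(\Gamma)$ decomposes as $\bigoplus_{T\in G_\Gamma}\calX(\Gamma^*_T)\otimes\calX(\Gamma_T)$ and $\calR^\bot$ is the annihilator with respect to the induced pairing between $\calN_{\Sha}(\calX)^{(2)}$ and $\calN_{\Sha}(s^{-1}\mathsf{S}^{-1}\calX^*)^{(2)}$. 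Since Koszulness and the existence of a quadratic Gr\"obner basis only involve this quadratic information, and since a monomial ordering on $\calN_{\Sha}(\calX)$ induces one on the dual free shuffle reconnectad on the dual basis (the structure operations are still increasing because the set-level combinatorics of $\calN_{\Sha}(\calB)$ is identical), the statement reduces to a purely linear-algebraic fact about a single finite-dimensional vector space equipped with a basis, an ordering of that basis, and a subspace $\calR$.

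The key step is then the following ``duality of PBW/Gr\"obner'' lemma at the level of weight three. Divisibility of a quadratic leading monomial by another, together with the ambiguity (``overlap'') relations, is governed entirely by the weight-three component $\calN_{\Sha}(\calX)^{(3)}$, which decomposes along the two-step nested sets $\tau\in N^+(\Gamma)$ with $|\tau|=3$; each such $\tau$ contributes a summand, and the three monomials obtained from $\tau$ by the three possible orders of ``assembling'' the tree give exactly the critical $S_3$-type ambiguities one must resolve. So I would fix a graph $\Gamma$ and, for each weight-three $\tau$, consider the finite-dimensional space $\calN_{\Sha}(\calX)^{(3)}_\tau$ together with the two subspaces $\calR\circ_{\rmR}\calX+\calX\circ_{\rmR}\calR$ (suitably restricted) controlling the relations of $\calF$ in that slot, and similarly for $\calF^!$. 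The classical argument — exactly as in \cite[Th.~4.1]{MR2177131} and \cite[Th.~5.5]{MR2574993} — shows that $\calF$ has a quadratic Gr\"obner basis iff, for every critical monomial, the relation subspace is spanned by the two ``obvious'' reductions, and that this condition for $\calF$ with ordering $<$ is equivalent by orthogonality to the same condition for $\calF^!$ with the opposite ordering $>$: the annihilator of a subspace spanned by reductions along $<$ is spanned by reductions along $>$. This is the combinatorial heart and I would spell out the correspondence between the critical triples for $\calF$ and those for $\calF^!$, which is the identity on the index set of weight-three nested sets.

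The main obstacle I anticipate is making sure that the weight-three combinatorics of shuffle reconnectads behaves exactly like that of shuffle operads, i.e.\ that ``critical monomials'' really are indexed by weight-three nested sets $\tau$ and that the overlap relations coming from the parallel and consecutive axioms (Proposition \ref{prop:infini}) are precisely the ones entering the diamond lemma. Concretely, one must check that for a weight-three monomial there are at most two ways its leading part can be ``covered'' by a quadratic leading monomial, and that the resulting rewriting graph is confluent iff the defining relations close up in that single weight-three slot — this is where the reconnected-complement identities $(\Gamma^*_U)^*_{V\setminus U}=\Gamma^*_V$ etc.\ from the proof of Proposition \ref{prop:assoc} are used to match the two sides. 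Once this dictionary is in place, the orthogonality argument is formal and identical to the associative-algebra case, so I would state the weight-three reformulation carefully and then simply invoke the linear-algebra fact that passing to annihilators exchanges ``spanned by $<$-reductions'' with ``spanned by $>$-reductions.''
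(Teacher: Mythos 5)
Your plan is correct and is essentially the paper's own approach: the paper gives no independent argument but simply states that the proposition is proved in the same way as the analogous results for associative algebras \cite[Th.~4.1]{MR2177131} and shuffle operads \cite[Th.~5.5]{MR2574993}, which is exactly the transplantation you spell out (reduction to quadratic data, the induced opposite ordering on the dual generators, the weight-three overlap analysis indexed by nested sets of cardinality three, and the annihilator/pivot-complementarity linear algebra). The only small slip is the remark about ``three monomials from the three orders of assembling the tree'': a weight-three monomial corresponds to a three-element nested set whose tree has two edges, hence exactly two quadratic divisors, as you yourself state correctly later.
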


\section{The wonderful Koszul pair}\label{sec:wonderful}

In this section, we discuss various aspects of geometry and topology of one of the most interesting reconnectads we are aware of: the complex wonderful reconnectad $\calW_{\mathbb{C}}$. Our main result exhibits what we call ``a wonderful Koszul pair'', a reconnectad analogue of the operads of hypercommutative algebras and of gravity algebras introduced and studied by Getzler \cite{MR1284793,MR1363058}.

\subsection{The gravity reconnectad}
In this section, we imitate the construction of the gravity operad due to Getzler \cite{MR1284793}. 

Note that the reconnectad $\grGerst$ has a degree $1$ derivation $\mathrm{d}$ with $\mathrm{d}^2=0$ such that $\mathrm{d}(m)=b$. This follows from the fact that $\grCom_{S^1}$ has the diagonal circle action, and hence on the homology there is an induced infinitesimal action of $H_\bullet(S^1)$; the derivation $\mathrm{d}$ is the action of the class $[S^1]\in H_\bullet(S^1)$. 

\begin{proposition}\label{prop:acyclic}
The cochain complex $(\grGerst, \mathrm{d})$ is acyclic. 
\end{proposition}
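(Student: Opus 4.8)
The plan is to unwind the definitions so that, component by component, $(\grGerst,\mathrm d)$ becomes a tensor product of copies of a single two-term acyclic complex, one copy for each vertex of the graph.

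First I would recall that $\grGerst=\grCom_{H_\bullet(S^1,\k)}$, so that $\grGerst(\Gamma)=\bigotimes_{v\in V_\Gamma}H_\bullet(S^1,\k)$, where on each tensor factor $m$ and $b$ denote the classes $[\mathrm{pt}]$ and $[S^1]$ of homological degrees $0$ and $1$. Next I would identify $\mathrm d$ explicitly: since $\mathrm d$ is the infinitesimal action of $[S^1]$ coming from the diagonal circle action on $\grCom_{S^1}$, and since $[S^1]$ is primitive for the coproduct on $H_\bullet(S^1,\k)$, the operator $\mathrm d$ acts on $\bigotimes_{v\in V_\Gamma}H_\bullet(S^1,\k)$ as the total differential $\sum_{v\in V_\Gamma}\mathrm d_v$, where $\mathrm d_v$ is the action of $[S^1]$ on the $v$-th tensor factor (with the usual Koszul signs), that is $m\mapsto b$ and $b\mapsto 0$. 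Equivalently, $(\grGerst(\Gamma),\mathrm d)$ is the tensor product of cochain complexes $\bigotimes_{v\in V_\Gamma}\bigl(H_\bullet(S^1,\k),\,[S^1]\cdot(-)\bigr)$.

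The key remaining point is that each tensor factor is acyclic: the two-term complex $\k m\xrightarrow{\ \mathrm d\ }\k b$ with $\mathrm d(m)=b$ is contractible via the explicit homotopy $h$ defined by $h(b)=m$ and $h(m)=0$, which satisfies $\mathrm d h+h\mathrm d=\mathrm{id}$. Since a finite tensor product over a field of bounded contractible complexes of vector spaces is again contractible (the contracting homotopies tensor up to a contracting homotopy of the product), it follows that for every connected graph $\Gamma$ with $V_\Gamma\neq\emptyset$ the complex $(\grGerst(\Gamma),\mathrm d)$ is contractible, hence acyclic. I do not expect a genuine obstacle here; the only step requiring a little care is the identification of $\mathrm d$ with $\sum_{v}\mathrm d_v$, which rests on the primitivity of $[S^1]$ in $H_\bullet(S^1,\k)$ together with the compatibility of the diagonal circle action with the K\"unneth isomorphism $H_\bullet\bigl((S^1)^{V_\Gamma},\k\bigr)\cong H_\bullet(S^1,\k)^{\otimes V_\Gamma}$. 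Once that is established, acyclicity is immediate from the vertex-by-vertex contracting homotopy.
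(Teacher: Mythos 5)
Your proof is correct. Your identification of $\mathrm{d}$ with $\sum_{v\in V_\Gamma}\mathrm{d}_v$ (via primitivity of $[S^1]$ and the K\"unneth isomorphism) is exactly right and matches what the paper uses implicitly, since dually the differential is the derivation $\partial$ of the exterior algebra with $\partial(\omega_v)=1$ for every vertex $v$; and tensoring the contraction of the two-term complex $\k m\to\k b$ at one vertex with the identity on the remaining factors does yield a contracting homotopy of $\grGerst(\Gamma)$ for $V_\Gamma\neq\emptyset$ (one could equally just invoke K\"unneth over a field). The paper's route differs mainly in presentation and in what it extracts: it passes to the dual cohomological picture, identifies $H^\bullet(\grCom_{S^1})(\Gamma)$ with the exterior algebra on the $\omega_v$, takes multiplication by a single $\omega_v$ as a contracting homotopy for $\partial$ --- which is precisely the dual of your one-vertex homotopy --- and then averages these homotopies over all vertices (using characteristic zero) to produce a contracting homotopy commuting with the $\Aut(\Gamma)$-action, i.e.\ a contraction in the category of graphical collections. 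Your argument gives acyclicity of each component, which is all the proposition asserts and suffices for its later uses; the paper's averaging buys the slightly stronger conclusion of an equivariant contraction, which a homotopy attached to a chosen vertex cannot provide since it breaks the symmetry of the graph.
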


\begin{proof}
Let us prove the result for the dual chain complex, that is the graphical collection $\grCom(H^{\bullet}(S^1))\cong H^\bullet(\grCom_{S^1})$. 
Since cohomology is contravariant, the natural structure on this collection is that of a coreconnectad. One aspect that is advantageous in this viewpoint is that the cohomology has an algebra structure, and, in this particular situation, a very simple one: $H^\bullet(\grCom_{S^1})(\Gamma)$ is the exterior algebra on generators $\omega_v$, $v\in V_\Gamma$. The infinitesimal coreconnectad structure maps 
 \[
H^\bullet(\grCom_{S^1})(\Gamma)\to H^\bullet(\grCom_{S^1})(\Gamma^*_T)\otimes H^\bullet(\grCom_{S^1})(\Gamma_T)
 \] 
are compatible with the algebra structures: they are the algebra homomorphisms defined by 
\[
 \omega_v \mapsto \begin{cases}
    \omega_v\otimes1, v\not\in T
    \\
    1\otimes\omega_v, v \in T
    \end{cases}.
\]
Moreover, the dual of the cochain differential $\mathrm{d}$ corresponds to the derivation $\partial$ of the algebra $H^\bullet(\grCom_{S^1})(\Gamma)$ defined by setting $\partial(\omega_v)=1$ for all $v\in V_\Gamma$. We would like to show that the differential $\partial$ makes the dual Gerstenhaber copermutad an acyclic chain complex. We shall now exhibit a contracting homotopy for that complex. We first note that for every graph $\Gamma$ and every vertex $v \in V_\Gamma$, the map 
 \[
H_v\colon H^\bullet(\grCom_{S^1})(\Gamma)\to H^\bullet(\grCom_{S^1})(\Gamma)
 \]
defined as multiplication by $\omega_v$ on the left is a contracting homotopy for $\partial$ in the category of chain complexes:
 \[
(\partial H_v +H_v \partial)(x)=\partial(\omega_vx)+\omega_v\partial x=(x-\omega_v\partial x)+\omega_v\partial x=x.
 \]
However, we would like to construct a contracting homotopy in the category of graphical collections. For that, we average the homotopies over all vertices, defining
 \[
H:=\frac{1}{|V_\Gamma|}\sum_{v \in V_\Gamma} H_v.
 \] 
This map is still a contracting homotopy but now it commutes with the action of $\mathrm{Aut}(\Gamma)$, so it is a contracting homotopy in the category of graphical collections.
\end{proof}

The kernel of a derivation of a reconnectad is itself a reconnectad, so we may give the following definition.

\begin{definition}[gravity reconnectad]
The \emph{gravity reconnectad}, denoted $\grGrav$, is the kernel 
 \[
\ker \mathrm{d} \subset \grGerst.
 \]
\end{definition}

Let us record a useful dimension formula.

\begin{proposition}\label{prop:gravdim}
For every nonempty graph $\Gamma$, we have
\[
\dim \grGrav(\Gamma) = 2^{|V_\Gamma|-1}.
\]
\end{proposition}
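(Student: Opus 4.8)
The plan is to read the dimension off directly from the acyclicity statement of Proposition~\ref{prop:acyclic}, using only the elementary fact that a finite-dimensional acyclic complex has its space of cycles equal to its space of boundaries, hence exactly half of the total dimension.

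Concretely I would proceed in three short steps. First, compute $\dim\grGerst(\Gamma)$: since $\grGerst=\grCom_{H_\bullet(S^1,\k)}$, we have $\grGerst(\Gamma)=\bigotimes_{v\in V_\Gamma}H_\bullet(S^1,\k)$, and as $\dim H_\bullet(S^1,\k)=2$ this gives $\dim\grGerst(\Gamma)=2^{|V_\Gamma|}$. Second, recall that $\grGrav(\Gamma)$ is, by definition of the gravity reconnectad, the kernel of the degree~$1$ differential $\mathrm{d}$ on the finite-dimensional vector space $\grGerst(\Gamma)$, and that by Proposition~\ref{prop:acyclic} the complex $(\grGerst(\Gamma),\mathrm{d})$ is acyclic for every nonempty $\Gamma$. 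Third, for any finite-dimensional complex $(W,\mathrm{d})$ one has $\dim W=\dim\ker\mathrm{d}+\dim\operatorname{im}\mathrm{d}$; acyclicity forces $\ker\mathrm{d}=\operatorname{im}\mathrm{d}$, so $\dim W=2\dim\ker\mathrm{d}$. Applying this with $W=\grGerst(\Gamma)$ gives $\dim\grGrav(\Gamma)=\tfrac12\cdot 2^{|V_\Gamma|}=2^{|V_\Gamma|-1}$.

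I do not expect any genuine obstacle: the statement is essentially a one-line corollary of Proposition~\ref{prop:acyclic}, and the only point worth a comment is the hypothesis that $\Gamma$ be nonempty, which is exactly the range in which Proposition~\ref{prop:acyclic} is valid (its contracting homotopy is an average over $V_\Gamma$ and thus involves dividing by $|V_\Gamma|$) and in which $2^{|V_\Gamma|-1}$ is an integer. If a reader preferred an argument not appealing to acyclicity, one could instead work in the dual exterior-algebra model $H^\bullet(\grCom_{S^1})(\Gamma)\cong\Lambda[\omega_v:v\in V_\Gamma]$ from the proof of Proposition~\ref{prop:acyclic}, where the dual of $\mathrm{d}$ is the odd derivation $\partial$ with $\partial(\omega_v)=1$; after the linear substitution fixing one generator $\omega_{v_0}$ and replacing $\omega_v$ by $\omega_v-\omega_{v_0}$ for $v\neq v_0$, the operator $\partial$ becomes contraction in the single direction $\omega_{v_0}$, so its kernel is spanned by the exterior monomials in the remaining generators and is visibly of dimension $2^{|V_\Gamma|-1}$. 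But the dimension count via Proposition~\ref{prop:acyclic} is the cleanest route and the one I would present.
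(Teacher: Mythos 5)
Your proposal is correct and is essentially the paper's own argument: the paper also deduces the formula immediately from Proposition~\ref{prop:acyclic} together with $\dim \grGerst(\Gamma)=2^{|V_\Gamma|}$, and you merely spell out the half-dimension count for a finite-dimensional acyclic complex. The remark about nonemptiness of $\Gamma$ and the optional exterior-algebra computation are fine additions but not needed.
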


\begin{proof}
This is an immediate consequence of Proposition~\ref{prop:acyclic} and the formula $\dim \grGerst(\Gamma) = 2^{|V_\Gamma|}$.
\end{proof}

The main result of this section is a presentation of the reconnectad $\grGrav$ by generators and relations. 

\begin{theorem}\label{th:gravpresent}
The reconnectad $\grGrav$ is generated by the elements $\lambda_{\Gamma} \in \grGrav(\Gamma)$ of homological degree $1$ which are $\mathrm{Aut}(\Gamma)$-invariant and satisfy the relations
\begin{gather}
\sum_{v \in T} \lambda_{\Gamma^*_v} \circ_v^{\Gamma} \lambda_{\Gamma_v} = \lambda_{\Gamma^*_T}\circ_T^{\Gamma} \lambda_{\Gamma_T},\label{eq:grav1} \\
    \sum_{v \in V_\Gamma}  \lambda_{\Gamma^*_v} \circ_v^{\Gamma} \lambda_{\Gamma_v} = 0 .\label{eq:grav2}
\end{gather}
In these relations, $\Gamma$ is an arbitrary graph, and $V_\Gamma\ne T\in G_\Gamma$ . The reconnectad $\grGrav$ is Koszul.
\end{theorem}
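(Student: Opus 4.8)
The plan is to prove the presentation and Koszulness simultaneously by exhibiting a quadratic Gröbner basis for the associated shuffle reconnectad $\grGrav^{\mathrm{f}}$, exactly along the lines of how one treats the gravity operad. First I would pin down the generators: since $\grGrav(\Gamma)=\ker\mathrm{d}\subset\grGerst(\Gamma)$ and $\grGerst\cong\grCom_{H_\bullet(S^1)}$, a direct computation identifies the degree-one class $\lambda_\Gamma$ as (up to scalar) $\sum_{v\in V_\Gamma} m^{\otimes(V_\Gamma\setminus v)}\otimes b_v$; one checks $\mathrm{d}\lambda_\Gamma=0$ and $\mathrm{Aut}(\Gamma)$-invariance, and that the $\lambda_\Gamma$ together with their iterated compositions span $\grGrav$. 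The relations \eqref{eq:grav1} and \eqref{eq:grav2} should be verified directly inside $\grGerst$ using the three quadratic relations for $m,b$ from the presentation of $\grGerst$, rewriting $b\circ m$ in terms of $m\circ b$ via the distributive law of Proposition~\ref{prop:gerst-distr}; this is a routine but slightly delicate sign bookkeeping computation.

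Next I would set up the auxiliary reconnectad $\widetilde{\grGrav}$ defined by the generators $\lambda_\Gamma$ and relations \eqref{eq:grav1}, \eqref{eq:grav2}, together with the evident surjection $\widetilde{\grGrav}\twoheadrightarrow\grGrav$. The goal becomes to show this is an isomorphism and that $\widetilde{\grGrav}$ is Koszul. For this I pass to the shuffle reconnectad $\widetilde{\grGrav}{}^{\mathrm{f}}$ and choose a monomial ordering refining the lexicographic ordering $\triangleleft$ on nested sets from Section~\ref{sec:shuffle} (blended with an ordering of the $\lambda$-decorations, which here are trivial since there is one generator per graph). The relations \eqref{eq:grav1}, when written in shuffle form, each have a distinguished leading term — for a fixed $T\in G_\Gamma$ the term $\lambda_{\Gamma^*_T}\circ_T^\Gamma\lambda_{\Gamma_T}$ or one of the $\circ_v^\Gamma$ terms, whichever is $\triangleleft$-largest — and I would argue these leading terms, as $\Gamma$ and $T$ range over all graphs and tubes (plus the leading terms of \eqref{eq:grav2}), constitute a quadratic Gröbner basis. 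Concretely I must show that the normal monomials with respect to this leading-term set are counted by $2^{|V_\Gamma|-1}$ in each arity $\Gamma$, matching $\dim\grGrav(\Gamma)$ from Proposition~\ref{prop:gravdim}; the normal monomials should be indexed by the nested sets $\tau\in N^+(\Gamma)$ whose associated tree $\mathbb{T}_\tau$ avoids a certain forbidden two-vertex pattern, and a bijective count of these against (say) subsets of $V_\Gamma$ of even size finishes this step. Once the dimension count works out, Proposition~\ref{prop:shuffleKoszul}(2) gives Koszulness of $\widetilde{\grGrav}{}^{\mathrm{f}}$, hence of $\widetilde{\grGrav}$ by Corollary~\ref{cor:forget}(5), and the dimension match forces $\widetilde{\grGrav}\cong\grGrav$.

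The main obstacle I anticipate is the combinatorial core: verifying that the chosen leading terms actually form a Gröbner basis, i.e. that all S-polynomials (overlaps of two relations) reduce to zero, and — more importantly — nailing down the exact description and count of normal monomials. The subtlety is that \eqref{eq:grav1} is really a whole family of relations indexed by $(\Gamma,T)$, so overlaps between an instance of \eqref{eq:grav1} for one tube and an instance for a nested tube, as well as overlaps with \eqref{eq:grav2}, must all be checked to confluence; this is where the analogy with Getzler's gravity operad is most load-bearing and where one genuinely uses the structure of nested sets for graphical building sets rather than formal nonsense. An alternative, if the Gröbner argument proves unwieldy, is to instead establish the presentation by a direct dimension/spanning argument (the relations clearly hold and clearly suffice to reduce any monomial to the claimed normal form by the parallel and consecutive axioms, much as in the proof of Proposition~\ref{prop:relgrcom}), and then deduce Koszulness separately — for instance by recognizing $\grGrav$ as a piece of the Koszul dual of the wonderful reconnectad, which is exactly the "wonderful Koszul pair" promised in the section title. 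I would keep both routes in mind, but lead with the Gröbner basis approach since it delivers presentation and Koszulness in one stroke.
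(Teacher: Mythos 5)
Your proposal follows essentially the same route as the paper's proof: generators $\lambda_\Gamma=\mathrm{d}(m_\Gamma)$ extracted from the acyclicity of $(\grGerst,\mathrm{d})$, the relations verified inside $\grGerst$, then passage to the shuffle reconnectad with the lexicographic ordering, a count of normal monomials equal to $2^{|V_\Gamma|-1}=\dim\grGrav(\Gamma)$ from Proposition~\ref{prop:gravdim}, and Koszulness via Proposition~\ref{prop:shuffleKoszul} and Corollary~\ref{cor:forget}. Two small adjustments: the S-polynomial/confluence check you flag as the main obstacle is unnecessary, since the rewriting rules already show that the normal monomials span the presented reconnectad, and once their number matches $\dim\grGrav(\Gamma)$ the surjection onto $\grGrav$ is forced to be an isomorphism and the relations automatically form a quadratic Gr\"obner basis; moreover the normal monomials are the compositions $\mu_V^\Gamma(\lambda_{\Gamma^*_V}\otimes\lambda^{(T_1)}\otimes\cdots\otimes\lambda^{(T_n)})$ indexed by the subsets $V\subset V_\Gamma$ with $\min(V_\Gamma)\notin V$ (hence $2^{|V_\Gamma|-1}$ of them), rather than by even-size subsets.
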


A particular case of these relations for the cycle $C_3$ is given in Figure~\ref{fig:gravrel}.
\begin{figure}[ht]
    \centering
    \def\svgwidth{0.7\columnwidth}
    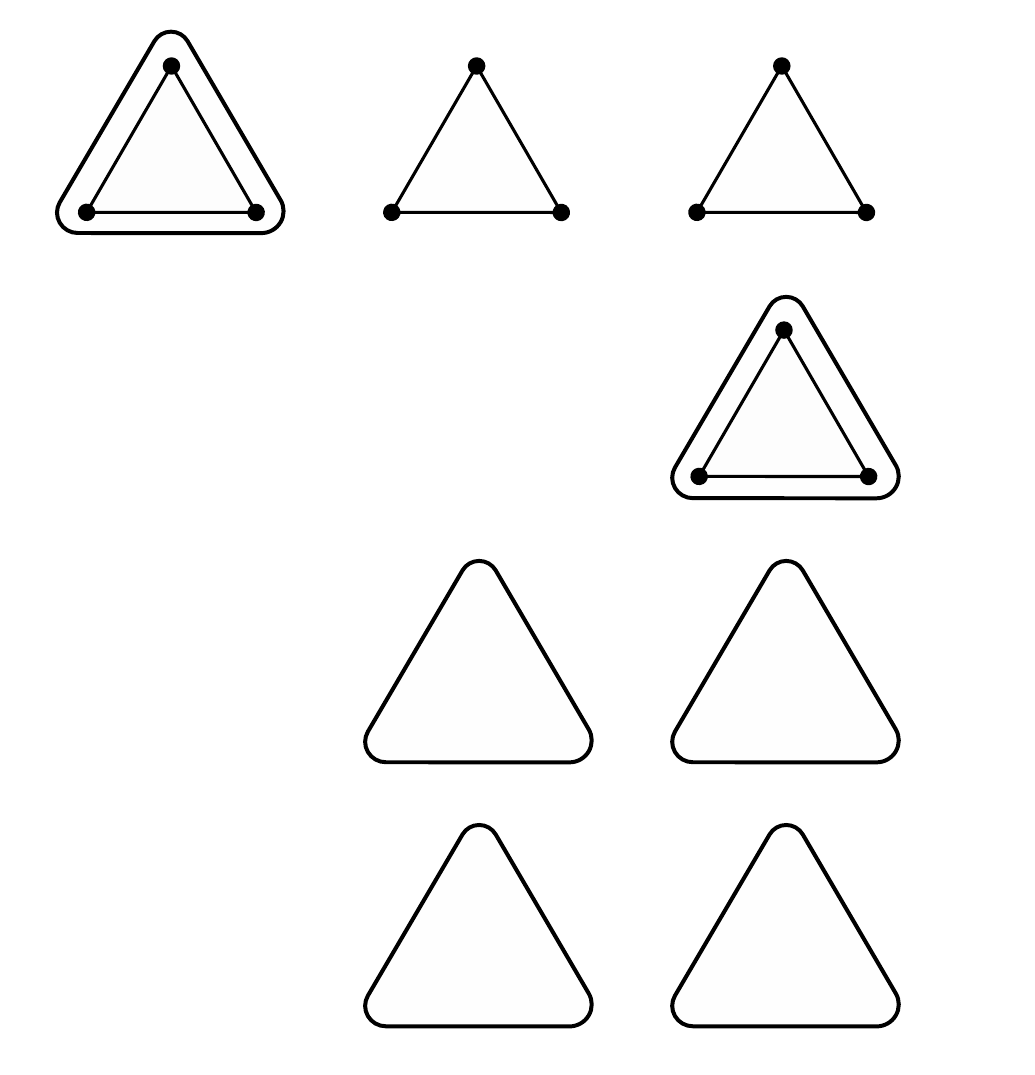
    \caption{Example of relations of the gravity reconnectad}
    \label{fig:gravrel}
\end{figure}

\begin{proof}
We note that, in order to have a nontrivial relation, we have to assume that $T\in G_\Gamma$ has cardinality at least two. Let us begin with indicating the elements $\lambda_{\Gamma}$ of $\grGrav(\Gamma)$ which we shall use as generators. Since the cochain complex $(\grGerst, \mathrm{d})$ is acyclic, we have 
 \[
\grGrav_1=\mathrm{d}\grGerst_0.
 \] 
The space of elements of homological degree zero in $\grGerst(\Gamma)$ is of dimension one, spanned by the basis element $m_\Gamma$ of $\grCom(\Gamma)$, and therefore the space of elements of homological degree $1$ in the reconnectad $\grGrav$ has a basis consisting of elements $\lambda_{\Gamma}=\mathrm{d}(m_{\Gamma})$. In terms of the generators of $\grGerst$, these elements can be computed as
 \[
\mathrm{d}(m_{\Gamma})=\partial^{*}(1^*_{\Gamma})=\sum_{v\in V_\Gamma} \omega_v^*=\sum_{v\in V_\Gamma} m_{\Gamma^*_v} \circ_v^\Gamma b_{\Gamma_v}.
 \]
Let us demonstrate that the set of elements $\lambda_{\Gamma}$ described above generates the reconnectad $\grGrav$. For that, we shall take a graph $\Gamma$ and choose a total order on $V_\Gamma$. Let us define, analogously to Example \ref{ex:grCom}, an element $b_\Gamma \in \grGerst(\Gamma)$ by the inductive rule 
 \[
b_\Gamma = b_{\Gamma^*_{V_\Gamma\setminus\{\max(V_\Gamma)\}}}\circ_{\max(V_\Gamma)}^{\Gamma} b_{\max(V_\Gamma)}.
 \]
The element $b_\Gamma$ forms a basis of the one-dimensional component $\mathsf{S}^{-1}(\Gamma)$; in fact, this is the only normal form if we equip the corresponding free shuffle reconnectad with the lexicographic ordering. Thus, according to Proposition~\ref{prop:gerst-distr}, the component $\grGerst(\Gamma)$ has a basis of form  
 \[
\mu_V^\Gamma(m_{\Gamma^*_V}\otimes b_{T_1}\otimes b_{T_2}\otimes\cdots\otimes b_{T_n}),
 \]
where $V\subset V_\Gamma$ and $T_1,T_2,\ldots,T_n$ are sets of vertices of connected components of~$\Gamma_V$. As the cochain complex $(\grGerst, \mathrm{d})$ is acyclic, each component of $\grGrav$ is spanned by the elements 
 \[
\mathrm{d}\mu_V^\Gamma(m_{\Gamma^*_V}\otimes b_{T_1}\otimes b_{T_2}\otimes\cdots\otimes b_{T_n}).
 \]
Moreover, since $\mathrm{d}$ is a reconnectad derivation, and $\mathrm{d}(b)=0$, such an element is equal to $\mu_V^\Gamma(\lambda_{\Gamma^*_V}\otimes b_{T_1}\otimes b_{T_2}\otimes\cdots\otimes b_{T_n})$.
By construction, $b=\lambda_{P_1}$, so we obtain the necessary claim. 

Next, we shall show that the claimed relations between the elements $\lambda_{\Gamma}$ actually hold. For $T\in G_\Gamma$, let us compute $\mathrm{d}(m_{\Gamma^*_T}\circ_T^{\Gamma} \lambda_{\Gamma_T})$ in two different ways. On the one hand, this is equal to $\lambda_{\Gamma^*_T}\circ_T^{\Gamma} \lambda_{\Gamma_T}$. On the other hand, this is equal to
 \[
\mathrm{d}\left(m_{\Gamma^*_T}\circ_T^{\Gamma} \left(\sum_{v\in T} m_{(\Gamma_T)^*_v}\circ_v^{\Gamma_T} b_v\right)\right)
= \mathrm{d}\left(\sum_{v\in T} m_{\Gamma^*_v}\circ_v^{\Gamma} b_v\right)=\sum_{v \in T} \lambda_{\Gamma^*_v} \circ_v^{\Gamma} \lambda_v,
 \]
showing that Relation \eqref{eq:grav1} holds. We also note that we have
 \[
0=\mathrm{d}(\lambda_{\Gamma})=\mathrm{d}\left(\sum_{v\in V_\Gamma}m_{\Gamma^*_v}\circ_v^{\Gamma} b_v\right)=\sum_{v\in V_\Gamma}\lambda_{\Gamma^*_v}\circ_v^{\Gamma} \lambda_v,
 \] 
and therefore Relation \eqref{eq:grav2} holds. Overall, this implies that there is a surjective morphism of reconnectads from the reconnectad $\calG$ with the indicated generators and relations \eqref{eq:grav1}, \eqref{eq:grav2} to the reconnectad $\grGrav$. Let us establish that it is an isomorphism. 

Let us consider the shuffle reconnectad $\calG^{\mathsf{f}}$, and equip the monomials in the corresponding free shuffle reconnectad with the lexicographic ordering. Then the leading monomial of Relation \eqref{eq:grav1} is $\lambda_{\Gamma^*_{T}}\circ_T^{\Gamma} \lambda_{\Gamma_{T}}$, and the corresponding normal monomials are
 \[
\mu_V^\Gamma(\lambda_{\Gamma^*_V}\otimes\nu_{T_1}\otimes\nu_{T_2}\otimes\cdots\otimes\nu_{T_n}),
 \]
where $V\subset V_\Gamma$, $T_1,T_2,\ldots,T_n$ are sets of vertices of connected components of $\Gamma_V$, and $\nu_{T_i}$ are some monomials obtained by iterated compositions of $\lambda_{P_1}$. The leading monomial of Relation \eqref{eq:grav2} is $\lambda_{(\Gamma)^*_{\min(V_\Gamma)}} \circ_{\min(V_\Gamma)}^{\Gamma} \lambda_{\Gamma_{\min(V_\Gamma)}}$, and this imposes further restrictions on normal monomials. First, for the subreconnectad generated by $\lambda_{P_1}$ (which is isomorphic to the desuspension reconnectad $\mathsf{S}^{-1}$), this leading term eliminates all monomials in $\lambda_{P_1}$ except for one element $\lambda^{(T)}$ per component (constructed by the same inductive rule as the element $b_T$ above). In general, normal monomials with respect to the leading terms of our relations are of the form
 \[
\mu_V^\Gamma(\lambda_{\Gamma^*_V}\otimes\lambda^{(T_1)}\otimes\lambda^{(T_2)}\otimes\cdots\otimes\lambda^{(T_n)}),
 \]
where $V\subset V_\Gamma$, $T_1,T_2,\ldots,T_n$ are sets of vertices of connected components of $\Gamma_V$, and $\min(V_\Gamma)\notin V$. By Proposition \ref{prop:normal}, these elements form a spanning set of the component $\calG(\Gamma)$. It remains to note that the number of such monomials is precisely $2^{|V_\Gamma|-1}$, which is, according to Proposition~\ref{prop:gravdim}, equal to~$\dim\grGrav(\Gamma)$, so 
 \[
\dim\calG(\Gamma)\le \dim\grGrav(\Gamma).
 \] 
Thus, the surjection $\calG\twoheadrightarrow\grGrav$ has to be an isomorphism, and moreover the defining relations of $\grGrav$ form a quadratic Gr\"obner basis of the corresponding shuffle reconnectad. By Proposition \ref{prop:shuffleKoszul} and Corollary \ref{cor:forget}, the reconnectad $\grGrav$ is Koszul.
\end{proof}

\subsection{Koszul dual of the gravity reconnectad}

In this section, we describe the Koszul dual of the gravity reconnectad. 

\begin{definition}[hypercommutative reconnectad]
The \emph{hypercommutative reconnectad} $\grHyperCom$ is the desuspension of the Koszul dual of the gravity reconnectad:
 \[
\grHyperCom := \mathsf{S}^{-1}(\grGrav)^! \ . 
 \]
\end{definition}

Let us give a presentation of the reconnectad $\grHyperCom$ by generators and relations.

\begin{proposition}\label{prop:hyperpresent}
The reconnectad $\grHyperCom$ is generated by elements $\nu_{\Gamma} \in \grHyperCom(\Gamma)$ of degree $2(|V_\Gamma|-1)$ which are invariant under $\Aut(\Gamma)$ and satisfy the following relations. For each connected graph $\Gamma$ and each $(v,v')\in E_\Gamma$, we have:
\begin{equation}\label{eq:relHyper}
\sum_{V_\Gamma\ne T\in G_\Gamma\colon v \in T} \nu_{\Gamma^*_T}\circ_T^{\Gamma} \nu_{\Gamma_T} =
\sum_{V_\Gamma\ne T\in G_\Gamma\colon v' \in T} \nu_{\Gamma^*_T}\circ_T^{\Gamma} \nu_{\Gamma_T}
\end{equation}
\end{proposition}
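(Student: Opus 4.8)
The plan is to read off the presentation directly from the definition $\grHyperCom:=\mathsf{S}^{-1}(\grGrav)^{!}$, feeding in the presentation of $\grGrav$ produced by Theorem~\ref{th:gravpresent}. By that theorem $\grGrav$ is a quadratic reconnectad with one generator $\lambda_\Gamma$ per nonempty connected graph $\Gamma$, sitting in homological degree $1$ and spanning a trivial $\Aut(\Gamma)$-module, and with relations \eqref{eq:grav1}--\eqref{eq:grav2}; moreover its components are finite-dimensional by Proposition~\ref{prop:gravdim}. Hence the recipe for Koszul duals of finite-dimensional quadratic reconnectads applies (the analogue of \cite[Prop.~7.2.4]{MR2954392} discussed above): $(\grGrav)^{!}$ is presented by the generators $s^{-1}\mathsf{S}^{-1}(\grGrav)^{*}$ and the relations $\calR^{\bot}$, where $\calR\subset\calN(\calX)$ is the weight-two space of relations of $\grGrav$. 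Applying $\mathsf{S}^{-1}\underset{H}{\otimes}-$ to this presentation makes sense because $\mathsf{S}^{-1}\underset{H}{\otimes}-$ tensors each component with a one-dimensional space and satisfies $\mathsf{S}^{-1}\calN(\calY)\cong\calN(\mathsf{S}^{-1}\calY)$, since the vertex labels $\lambda(T)$, $T\in\tau$, of the tree $\mathbb{T}_\tau$ of any nested set $\tau$ partition $V_\Gamma$; this shows that $\grHyperCom$ is presented by one generator $\nu_\Gamma$ per nonempty connected graph. A count of the two desuspensions $\mathsf{S}^{-1}$ and the one suspension inside $(\grGrav)^{\text{!`}}$ gives $|\nu_\Gamma|=(|V_\Gamma|-2)+|V_\Gamma|=2(|V_\Gamma|-1)$; and since $\mathsf{S}^{-1}(\Gamma)$ carries the sign representation of $\Aut(\Gamma)$ (it is a tensor power of the odd-degree line $\k s$), the two copies of $\mathsf{S}^{-1}$ contribute $\mathrm{sign}\otimes\mathrm{sign}=\mathrm{trivial}$, so $\nu_\Gamma$ is $\Aut(\Gamma)$-invariant, as claimed.

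It remains to identify $\calR^{\bot}$. Fix a nonempty connected graph $\Gamma$. Since $\Gamma^{*}_T$ is connected for every $T\subset V_\Gamma$ and $\Gamma_T$ is connected for every $T\in G_\Gamma$, the weight-two part of the free reconnectad $\calN(\calX)$ at $\Gamma$ has basis $\{e_T:=\lambda_{\Gamma^{*}_T}\circ_T^{\Gamma}\lambda_{\Gamma_T}\}$ indexed by $T\in G_\Gamma\setminus\{V_\Gamma\}$, and likewise the weight-two part of the free reconnectad on the generators $\nu$ has basis $\{f_T:=\nu_{\Gamma^{*}_T}\circ_T^{\Gamma}\nu_{\Gamma_T}\}$, with the natural pairing (and the Koszul signs handled exactly as in \cite[Ch.~7]{MR2954392}) sending $f_S$ to $\pm e_S^{*}$. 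In the $e_T$-coordinates the relations \eqref{eq:grav1}--\eqref{eq:grav2} of $\grGrav$ become $e_T-\sum_{v\in T}e_{\{v\}}$ for $T\in G_\Gamma$ with $2\le|T|<|V_\Gamma|$, together with $\sum_{v\in V_\Gamma}e_{\{v\}}$ (the case $T=V_\Gamma$ of \eqref{eq:grav1} is not quadratic and is the one replaced by \eqref{eq:grav2}). Thus a functional $\sum_T c_T e_T^{*}$ annihilates $\calR(\Gamma)$ exactly when $c_T=\sum_{v\in T}c_{\{v\}}$ for all $T$ with $|T|\ge2$ and $\sum_{v\in V_\Gamma}c_{\{v\}}=0$; in particular $\dim\calR(\Gamma)^{\bot}=|V_\Gamma|-1$, with the values $(c_{\{v\}})_{v\in V_\Gamma}$ ranging over the hyperplane of vectors summing to zero.

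For an edge $(v,v')\in E_\Gamma$, take the functional with $c_T=[\,v\in T\,]-[\,v'\in T\,]$; one checks at once that it meets the constraints above, hence lies in $\calR(\Gamma)^{\bot}$. Translating it through the pairing yields the element $\sum_{T\ni v}f_T-\sum_{T\ni v'}f_T$ of the weight-two free reconnectad (the sums over $T\in G_\Gamma\setminus\{V_\Gamma\}$), and adding to both sides the weight-one term $f_{V_\Gamma}=\nu_\Gamma$ (which equals $\nu_{\Gamma^{*}_{V_\Gamma}}\circ_{V_\Gamma}^{\Gamma}\nu_{\Gamma_{V_\Gamma}}$ by the unit axiom and occurs in both sums) turns it into precisely relation \eqref{eq:relHyper}. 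Finally, because $\Gamma$ is connected, the edge-incidence vectors $\delta_v-\delta_{v'}$, $(v,v')\in E_\Gamma$, span the hyperplane $\{\,c:\sum_u c_u=0\,\}$, so the relations \eqref{eq:relHyper} over all edges of $\Gamma$ span $\calR(\Gamma)^{\bot}$. Hence \eqref{eq:relHyper} is a complete set of defining relations of $(\grGrav)^{!}$, and the $\mathsf{S}^{-1}$-regrading that produces $\grHyperCom$ changes neither the invariance of the generators nor the form of these relations; this gives the proposition.

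The argument has no real obstacle; the only thing requiring care is the sign and degree bookkeeping attached to the two desuspensions and the Koszul-dual suspension, which runs verbatim as in the operadic case of \cite[Ch.~7]{MR2954392}, together with the harmless observation that the boundary term $T=V_\Gamma$ in \eqref{eq:relHyper} is of weight one (by the unit axiom) and cancels. Once this is stripped away, the whole content reduces to the elementary linear-algebra fact that the edge-incidence vectors of a connected graph span the sum-zero hyperplane — this is what converts the annihilator computation into the edge-indexed relations \eqref{eq:relHyper}, and, incidentally, shows that for a graph that is not a tree these relations are not minimal.
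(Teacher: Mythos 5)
Your proposal is correct, and it follows the same overall strategy as the paper: read the presentation of $\grHyperCom$ off the quadratic presentation of $\grGrav$ from Theorem~\ref{th:gravpresent} via the Koszul-dual formula, compute the degree $2(|V_\Gamma|-1)$ and the $\Aut(\Gamma)$-invariance from the double (de)suspension, check that the edge relations lie in $\calR^\perp$, and then close the argument with a dimension count showing $\dim\calR^\perp=|V_\Gamma|-1$. The only genuine difference is how the completeness step is organized. The paper works ``from below'': it observes that all edge relations are consequences of the $|V_\Gamma|-1$ relations attached to a spanning tree, proves those are linearly independent by a flow-type argument (leaf vertex plus induction), and compares with the dimension of the weight-two component of $\grGrav$. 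You instead work ``from above'': you parametrize $\calR^\perp$ explicitly by the singleton coefficients $c_{\{v\}}$ subject to $\sum_v c_{\{v\}}=0$ (with $c_T=\sum_{v\in T}c_{\{v\}}$ forced), identify the edge relations with the incidence vectors $\delta_v-\delta_{v'}$, and invoke the standard fact that for a connected graph these span the sum-zero hyperplane. The two arguments are dual pieces of the same elementary linear algebra; yours has the small added benefit of describing $\calR^\perp$ completely (and of making visible that the edge relations are redundant exactly when $\Gamma$ has a cycle), while the paper's flow argument isolates a minimal spanning-tree set of relations. Both treatments handle the Koszul sign and $\mathsf{S}^{-1}$-twist bookkeeping at the same level of detail (``direct inspection'' deferred to the operadic conventions of Loday--Vallette), so you have not introduced any gap the paper's own proof does not also leave implicit.
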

\begin{proof}
Let us denote by $\nu_{\Gamma}$ the generator dual to the generator $\lambda_{\Gamma}$ of the reconnectad $\grGrav$. According to the general formula for the Koszul dual reconnectad, we have 
 \[
\grHyperCom := \mathsf{S}^{-1}(\mathsf{S}^{-1}(\grGrav^{\text{!`}})^*)=\mathsf{S}^{-2}(\grGrav^{\text{!`}})^*,
 \]
so the homological degree of $\nu_{\Gamma}$ is $|\nu_{\Gamma}|=2|V_\Gamma|-2=2(|\Gamma|-1)$. Moreover, since we took the double suspension of $(\grGrav^{\text{!`}})^*$, these elements are $\Aut(\Gamma)$-invariant. One can check by a direct inspection that the relations \eqref{eq:relHyper} are ortogonal to all relations of $\grGrav$. Let us verify that we found all relations, in other words, that for each $\Gamma$ the dimension of the module of relations coincides with the dimension of weight two elements of the reconnectad $\grGrav$, given by 
$\dim H_1(\grGrav(\Gamma))=|V_\Gamma|-1$. Note that our relations say that the element 
 \[
\sum_{V_\Gamma\ne T\in G_\Gamma\colon v \in T} \nu_{\Gamma^*_T}\circ_T^{\Gamma} \nu_{\Gamma_T}  \]
does not depend on the vertex $v\in V_\Gamma$. It is clear that each of these relations is a linear combination of $|V_\Gamma|-1$ such relations corresponding to edges of any chosen spanning tree $T$ of $\Gamma$. Let us show that those $|V_\Gamma|-1$ relations are linearly independent. For that, it will be convenient to consider the corresponding shuffle reconnectad $\grHyperCom^{\mathsf{f}}$ so that for each graph $\Gamma$ we may think of its edges as directed; for each edge $e$, we denote by $s(e)$ the smaller endpoint and by $t(e)$ the larger one. This allows us to choose a concrete direction of each relation, writing 
 \[
R_e:=\sum_{s(e) \in T} \nu_{\Gamma^*_T}\circ_T^{\Gamma} \nu_{\Gamma_T} -
\sum_{t(e) \in T} \nu_{\Gamma^*_T}\circ_T^{\Gamma} \nu_{\Gamma_T}=0.
 \]
Suppose that for some choice of coefficients $c_e$ we have
 \[
\sum_{e \in E_\Gamma} c_eR_e=0.
 \] 
Collecting the coefficient of a particular monomial $\nu_{\Gamma^*_T}\circ_T^{\Gamma} \nu_{\Gamma_T}$, we see that for every vertex $v \in V_\Gamma$ the sum 
 \[
\sum_{e\colon t(e)=v} c_e - \sum_{e\colon s(e)=v} c_e
 \]
must vanish, which is a flow condition of a sort. Let us show by induction on the number of vertices that this implies that all coefficients $c_e$ of the edges of the chosen spanning tree must vanish. The case of the spanning tree of two vertices is trivial. To proceed, we note that every tree has a vertex $v_0$ which is incident to only one edge $e_0$. The flow condition at the vertex $v$ implies that we have $c_{e_0}=0$. Moreover, if we delete the vertex $v_0$ and the edge $e_0$, the remaining coefficients $c_e$ define a flow on the remaining tree, and the induction hypothesis applies. 
\end{proof}

\subsection{The wonderful Koszul pair}

We are ready to establish the main result of this section, establishing that the gravity reconnectad and the homology of the wonderful reconnectad are, up to a (de)suspension, Koszul dual to each other.

\begin{theorem}\label{th:homologyhyper}
The homology of the complex wonderful reconnectad is isomorphic to the hypercommutative reconnectad:
 \[
H_{\bullet}(\calW_{\mathbb{C}}) \cong \grHyperCom.
 \]
\end{theorem}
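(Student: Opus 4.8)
The plan is to follow the strategy of Getzler's proof that $H_\bullet(\overline{\calM}_{0,n+1})\cong\mathsf{Hycom}$: exhibit $H_\bullet(\calW_{\mathbb{C}})$ by generators and relations, match this with the presentation of $\grHyperCom$ from Proposition~\ref{prop:hyperpresent}, and then promote the resulting surjection to an isomorphism by a dimension count. The geometric input is that every $X_{\mathbb{C}}(\calP\Gamma)$ is a smooth projective toric variety, so that its homology is concentrated in even degrees, its cohomology ring is generated in degree~$2$ by classes of the torus-invariant prime divisors, and everything is controlled by the normal fan of $\calP\Gamma$, i.e.\ by the combinatorics of tubings.

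First I would identify the generators. For a nonempty connected graph $\Gamma$ the variety $X_{\mathbb{C}}(\calP\Gamma)$ has complex dimension $|V_\Gamma|-1$, so its fundamental class $\nu_\Gamma:=[X_{\mathbb{C}}(\calP\Gamma)]\in H_{2(|V_\Gamma|-1)}(X_{\mathbb{C}}(\calP\Gamma))$ spans a one-dimensional, $\Aut(\Gamma)$-invariant subspace, of the degree predicted by Proposition~\ref{prop:hyperpresent}. The structure operation $\circ_T^\Gamma$ of $H_\bullet(\calW_{\mathbb{C}})$ is the pushforward along the proper map $\psi_{\Gamma,T}$, which is an isomorphism onto the boundary divisor $D_T\subset X_{\mathbb{C}}(\calP\Gamma)$; hence $\nu_{\Gamma^*_T}\circ_T^\Gamma\nu_{\Gamma_T}=[D_T]$. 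Since for a smooth complete toric variety the cohomology ring is generated in degree~$2$ by the classes of the torus-invariant prime divisors, which here are the $D_T$ with $T\in G_\Gamma\setminus\{V_\Gamma\}$, an induction on $|V_\Gamma|$ (writing any homology class as a polynomial in the $[D_T]$ capped against $\nu_\Gamma$, observing that each monomial in divisor classes is the class of an orbit closure $\overline{X(\Gamma,\tau)}\cong\prod_{T\in\tau}X_{\mathbb{C}}(\calP(\Gamma_T)^*_{T\setminus\lambda(T)})$ as in Section~\ref{sec:toric-orbits}, and rewriting it as an iterated $\psi$-composition of fundamental classes of smaller graphs) shows that the classes $\nu_\Gamma$ generate the reconnectad $H_\bullet(\calW_{\mathbb{C}})$.

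Next I would verify the relations. By Poincar\'e duality the identity to check sits in $H^2(X_{\mathbb{C}}(\calP\Gamma))$: it asserts a relation among the $[D_T]$. For a smooth complete toric variety these are precisely the relations $\sum_T\langle m,u_T\rangle[D_T]=0$, one for each character $m$ of the acting torus $(\mathbb{G}_m)^{V_\Gamma}/\mathbb{G}_m$, where $u_T$ is the primitive generator of the ray of the fan of $\calP\Gamma$ indexed by $T$ and $\langle m,u_T\rangle$ records, with a sign, whether the distinguished vertex lies in $T$. Taking $m$ to be the character attached to an edge $(v,v')\in E_\Gamma$ yields exactly Relation~\eqref{eq:relHyper}, and, as in the proof of Proposition~\ref{prop:hyperpresent}, the characters coming from the edges of a spanning tree span the character lattice, so \eqref{eq:relHyper} accounts for all toric relations. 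This produces a surjection of reconnectads $\grHyperCom\twoheadrightarrow H_\bullet(\calW_{\mathbb{C}})$.

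Finally, injectivity would follow from a dimension count. Because $H_\bullet(X_{\mathbb{C}}(\calP\Gamma))$ is concentrated in even degrees, $\dim H_\bullet(X_{\mathbb{C}}(\calP\Gamma))=\chi(X_{\mathbb{C}}(\calP\Gamma))$; stratifying by torus orbits $X(\Gamma,\tau)\cong(\mathbb{G}_m)^{|V_\Gamma|-|\tau|}$ and using that $\chi_c$ of a nontrivial torus vanishes, this equals the number of maximal tubings of $\Gamma$, i.e.\ of $\tau\in N^+(\Gamma)$ with $|\tau|=|V_\Gamma|$. On the other side, $\grGrav$ carries a quadratic Gr\"obner basis of relations (Theorem~\ref{th:gravpresent}), so by Proposition~\ref{prop:dualPBW} the Koszul dual shuffle reconnectad — which up to suspension is $\grHyperCom^{\mathrm{f}}$ — has a quadratic Gr\"obner basis for the opposite ordering, and unwinding its normal monomials identifies $\dim\grHyperCom(\Gamma)$ with the same count of maximal tubings; equivalently one can run the Euler-characteristic recursion furnished by the Koszul complex of the Koszul reconnectad $\grGrav$ together with $\dim\grGrav(\Gamma)=2^{|V_\Gamma|-1}$. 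This forces the surjection above to be an isomorphism. I expect this last step to be the main obstacle: the bijection between the normal monomials of $\grHyperCom^{\mathrm{f}}$ and the vertices of the graph associahedron needs care, as does the bookkeeping of the double (de)suspension in $\grHyperCom=\mathsf{S}^{-2}(\grGrav^{\text{!`}})^{*}$ so that the homological degree $2(|V_\Gamma|-1)$ of $\nu_\Gamma$ comes out right. Conceptually, the cleanest explanation of why the answer must be $\grHyperCom$ — and an alternative to the dimension count — is the Deligne weight spectral sequence of the embedding $\mathbb{G}_m^{V_\Gamma}/\mathbb{G}_m\hookrightarrow X_{\mathbb{C}}(\calP\Gamma)$ with its normal crossing boundary: its $E_1$-page is the cobar complex of the coreconnectad $\Gamma\mapsto H_\bullet(\mathbb{G}_m^{V_\Gamma}/\mathbb{G}_m)$, which after a suspension is $\grGrav^{\text{!`}}$ with its residue-induced cooperations, and purity of $H_\bullet$ of the smooth projective variety $X_{\mathbb{C}}(\calP\Gamma)$ forces degeneration onto $\grHyperCom$; making that route rigorous (matching the $E_1$ differential with the cobar differential and the residues with $\grGrav^{\text{!`}}$) is the technical heart of the geometric approach.
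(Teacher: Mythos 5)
Your strategy is the paper's own: take the fundamental classes $\nu_\Gamma=[X_{\mathbb{C}}(\calP\Gamma)]$ as generators, verify the relations \eqref{eq:relHyper} geometrically to obtain a surjection $\grHyperCom\twoheadrightarrow H_\bullet(\calW_{\mathbb{C}})$, and then conclude by a dimension count comparing the number of maximal tubings $|N^+_{|V_\Gamma|}(\Gamma)|$ (which, as you say, equals $\dim H_\bullet(X_{\mathbb{C}}(\calP\Gamma))$; your Euler-characteristic/fixed-point count is equivalent to the paper's evaluation $h_{\calP\Gamma}(1)=f_{\calP\Gamma}(0)$) with an upper bound for $\dim\grHyperCom(\Gamma)$ extracted from the quadratic Gr\"obner basis supplied by Theorem~\ref{th:gravpresent} and Proposition~\ref{prop:dualPBW}. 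Your derivation of the relations from linear equivalence of toric divisors attached to the characters $e_v^*-e_{v'}^*$ is a harmless repackaging of the paper's argument using the map $\{H_T\}_{T\in G_\Gamma}\mapsto H_{\{v,v'\}}$ to $\mathbb{C}P^1$ and the homology of its fibres, and the generation step is fine.

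The genuine gap is exactly the step you flag yourself: ``unwinding its normal monomials identifies $\dim\grHyperCom(\Gamma)$ with the same count of maximal tubings'' is not an unwinding but the technical heart of the proof, and it is nowhere carried out in your sketch. One must (i) identify the leading terms of the relations of $\grHyperCom^{\mathsf{f}}$ for the ordering opposite to the one used for $\grGrav^{\mathsf{f}}$ (they are the monomials $\nu_{\Gamma^*_w}\circ_w^\Gamma\nu_{\Gamma_w}$ with $w\ne\min(V_\Gamma)$), so that normal monomials are described concretely as certain nested sets, and (ii) prove that the number of normal monomials supported at $\Gamma$ is at most $|N^+_{|V_\Gamma|}(\Gamma)|$. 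The paper does (ii) by constructing an ``induction'' map $\omega\mapsto\omega^{\mathrm{ind}}$ from normal monomials to maximal nested sets and a ``reduction'' map $\tau\mapsto\tau^{\mathrm{red}}$ deleting the sets indexed by descents of $\mathbb{T}_\tau$, and proving $(\omega^{\mathrm{ind}})^{\mathrm{red}}=\omega$ for normal $\omega$, whence the induction map is injective; this is a page of delicate combinatorics with no visible shortcut, and only the inequality is needed (the bijection then comes for free). Your proposed alternatives do not close this gap either: the Euler-characteristic recursion coming from the Koszul complex of $\grGrav$ would still require proving that the number of maximal tubings satisfies that same recursion, and the Deligne spectral sequence route (this is Proposition~\ref{th:delignekoszul} in the paper) establishes Koszulness of $H_\bullet(\calW_{\mathbb{C}})$ but, as you acknowledge, the identification of the $E_1$-page with the cobar construction and of the residue structure with $\grGrav$ is left unproved, so it cannot yet replace the dimension count.
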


\begin{proof}
We know that for each variety $X(\Gamma)$ all toric orbit closures are products of smaller toric graph associahedral varieties. This immediately implies that the fundamental classes $[X(\Gamma)]$ generate the reconnectad $H_{\bullet}(\calW_{\mathbb{C}})$. 

For each graph $\Gamma$ and each edge $(v,v')\in E(\Gamma)$ there is a map 
 \[
X(\Gamma)\to X(P_1)=\mathbb{C}P^1
 \] 
given by $\{H_T\}_{T\in G_\Gamma}\mapsto H_{\{v,w\}}$. The preimages of any two points are homologous. The preimage of the point $\k^{\{v\}}$ is easily seen to be given by 
    \[
\bigcup_{V_\Gamma\ne T\in G_\Gamma\colon v\in T, v'\notin T} \overline{X(\Gamma,\{T\})} \ .  
    \]
Thus, for each edge $(v,v')\in E(\Gamma)$ we obtain the relation
 \[
\sum_{V_\Gamma\ne T\in G_\Gamma\colon v \in T, v'\notin T} [X(\Gamma^*_T)]\circ_G^{\Gamma} [X(\Gamma_T)]= 
\sum_{V_\Gamma\ne T\in G_\Gamma\colon v'\in T, v \notin T} [X(\Gamma^*_T)]\circ_G^{\Gamma} [X(\Gamma_T)],
 \]
so, adding 
 \[
\sum_{V_\Gamma\ne T\in G_\Gamma\colon v,v' \in T} [X(\Gamma^*_T)]\circ_G^{\Gamma} [X(\Gamma_T)]
 \]
to both sides of that equality, we see that there is a surjective reconnectad morphism
 \[
\grHyperCom\twoheadrightarrow H_{\bullet}(\calW_{\mathbb{C}}).
 \]
To conclude the proof, we shall argue as follows. It is well known that the Betti numbers of the smooth projective toric variety $X(\Gamma)$ are given by the coefficients of the $h$-polynomial of the dual polytope. Since that dual polytope is the graph associahedron $\calP\Gamma$, we have
 \[
\dim H_{\bullet}(X(\Gamma))=h_{\calP\Gamma}(1)=f_{\calP\Gamma}(0)=|\{\text{vertices of } \calP\Gamma\}|,
 \] 
and vertices of $\calP\Gamma$ are in one-to-one correspondence with the set $N^+_{|V_\Gamma|}(\Gamma)$ consisting of elements of $N^+(\Gamma)$ of maximal cardinality $|V_\Gamma|$. This means that to prove that the surjective map we constructed is an isomorphism, it is enough to show that for each $\Gamma$, the dimension of $\grHyperCom(\Gamma)$ does not exceed $|N^+_{|V_\Gamma|}(\Gamma)|$.

Let us consider the shuffle reconnectad $\grHyperCom^{\mathsf{f}}$ associated to the hypercommutative reconnectad. In the proof of Theorem \ref{th:gravpresent}, we established that the relations of the shuffle reconnectad $\grGrav^{\mathsf{f}}$ form a Gr\"obner basis for the lexicographic ordering of nested sets. By Proposition \ref{prop:dualPBW}, the same is true for the shuffle reconnectad $\grHyperCom^{\mathsf{f}}$ for the opposite ordering. It follows that the the shuffle reconnectad $\grHyperCom^{\mathsf{f}}$ has a basis of monomials that are
normal with respect to the minimal system of relations (found in the proof of Proposition \ref{prop:hyperpresent}) for this particular ordering, that is, the monomials 
 \[
\nu_{\Gamma_w^*}\circ_w^{\Gamma}\nu_{\Gamma_w}
 \]
for all $w\in V_\Gamma\setminus\{\min(V_\Gamma)\}$. We therefore reduced our problem to showing that for each $\Gamma$, the number of such monomials, which we shall refer to as normal monomials, and view as elements of $|N^+(\Gamma)|$, does not exceed $|N^+_{|V_\Gamma|}(\Gamma)|$.

Recall that in Section \ref{sec:rec-wond} we explained how to assign to $\tau\in N^+(\Gamma)$ a rooted tree $\mathbb{T}_\tau$ whose vertices are labelled by non-empty pairwise disjoint subsets of~$V_\Gamma$. Suppose that $\tau\in N^+_{|V_\Gamma|}(\Gamma)$. In this case each vertex is labelled by a singleton, so the set of vertices of $\mathbb{T}_\tau$ is in one-to-one  correspondence with~$V_\Gamma$. We shall denote by $V_{\tau,v}$ the unique element of $\tau$ such that the vertex of $\mathbb{T}_\tau$ corresponding to the subset $V_{\tau,v}$ is $v$. We shall call a pair $v<w$ of vertices of $\Gamma$ a \emph{descent} of~$\tau$ \cite{MR2520477} if $v$ is a child of $w$ in $\mathbb{T}_\tau$. We denote by $\Des(\tau)$ the set of descents of $\tau$. The importance of this notion for our purposes is explained by a result of \cite{MR2520477} stating that for any finite simple graph $\Gamma$, the $h$-polynomial of $\calP\Gamma$ is given by the formula
\[h_{\calP\Gamma}(t)=\sum_{\tau\in N^+_{|V_\Gamma|}(\Gamma)} t^{|\Des(\tau)|},\]
which refines the formula $\dim H_{\bullet}(\calW_{\mathbb{C}}(\Gamma))=|N^+_{|V_\Gamma|}(\Gamma)|$ and gives a useful hint on how to proceed.

We shall now construct a ``reduction'' map from the set $N^+_{|V_\Gamma|}(\Gamma)$ of ``maximal nested sets'' to normal monomials that we shall denote by $\tau\mapsto \tau^{\mathrm{red}}$ and an ``induction'' map from normal monomials to maximal nested sets that we shall denote by $\omega\mapsto \omega^{\mathrm{ind}}$.

The reduction of $\tau$ is defined by the formula
 \[
\tau^{\mathrm{red}}:=\tau\setminus\{V_{\tau,v}\mid (v,w)\in\Des(\tau)\}.
 \]
To define induction, we consider, for a normal monomial $\omega\in |N^+(\Gamma)|$, the corresponding tree $\mathbb{T}_\omega$. Let us consider all $T\in\omega$ for which $|\lambda(T)|>1$ and choose a maximal (by inclusion) element $T$ from this set. Let $v$ be the minimal (with respect to the order on $V_\Gamma$) vertex of $\lambda(T)$. There is a unique minimal (by incluson) element $T'$ containing $v$ that is compatible with the nested set $\omega$: it is the union of $\{v\}$ with all sets $S$ to which $\{v\}\cup S\in G_\Gamma$. We add $T'$ to $\omega$, and repeat the same procedure until we obtain an element of $|N^+(\Gamma)|$  for which $|\lambda(T)|=1$ for all $T$, in other words, an element of $N^+_{|V_\Gamma|}(\Gamma)$. We denote that element by $\omega^{\mathrm{ind}}$ and call it the induced maximal nested set.

\begin{lemma}\leavevmode
\begin{enumerate}
\item For any $\tau\in N^+_{|V_\Gamma|}(\Gamma)$, $\tau^{\mathrm{red}}$ is a normal monomial.   
\item For any $\omega\in N^+(\Gamma)$, we have $(\omega^{\mathrm{ind}})^{\mathrm{red}}\subset\omega$.
\item For any normal monomial $\omega\in N^+(\Gamma)$, we have $(\omega^{\mathrm{ind}})^{\mathrm{red}}=\omega$.
\end{enumerate}
\end{lemma}

\begin{proof}
Let us start with the first assertion. Suppose that $\tau^{\mathrm{red}}$ is not a normal monomial, that is, it is divisible by a leading term of our relations. We note that it is enough to consider the particular case where $\tau$ has the maximal possible number of descents, that is $|V_\Gamma|-2$: removing the elements that contain the outer element set of the leading term, or are contained in the inner element of the leading term, or are disjoint from either of them will not impact the argument. We are therefore left with the case where the result of reduction is a quadratic monomial; let us show that it is normal. There are two possibilities: $V_{\tau,\max(V_\Gamma)}=V_\Gamma$ and $V_{\tau,\max(V_\Gamma)}\ne V_\Gamma$. In the latter case, $V_{\tau,\max(V_\Gamma)}$ is never deleted in the reduction process, and hence is the only nontrivial element of $\tau^{\mathrm{red}}$, which is therefore a normal monomial. In the former case, let $T$ be the only nontrivial element of $\tau^{\mathrm{red}}$, and let $v\in\lambda(T)$. As $T$ survived the reduction process, there is a vertex $w<v$, $w\not\in T$ such that $T\cup\{w\}\in G_\Gamma$. But if our monomial were not normal, then $T$ would necessarily contain $w$, a contradiction. 

Let us now establish the second assertion. For each $v\in V_\Gamma$, we consider the corresponding element $V_{\omega^{\mathrm{ind}},v}$. We shall show that if $V_{\omega^{\mathrm{ind}},v}$ survives the reduction process, then it is an element of $\omega$. Let $w$ be the vertex for which $V_{\omega^{\mathrm{ind}},w}$ is the smallest element strictly containing $V_{\omega^{\mathrm{ind}},v}$. Note that $w\in\lambda(V_{\omega^{\mathrm{ind}},w})$. If $V_{\omega^{\mathrm{ind}},v}$ survives the reduction process, we must have $w<v$. If $V_{\omega^{\mathrm{ind}},v}$ was created during the induction process, then $v$ was at some point the minimal element of $\lambda(T)$ for some $T$. However, as $v\in V_{\omega^{\mathrm{ind}},w}$, for every $T$ such that $v\in \lambda(T)$ we also have $w\in\lambda(T)$, and so $v$ can never be minimal. Hence $V_{\omega^{\mathrm{ind}},v}$ must be an element of $\omega$ in the first place.

Let us prove the last assertion. By contraposition, we have to prove that if $(\omega^{\mathrm{ind}})^{\mathrm{red}}\neq\omega$, then the monomial $\omega$ is not normal, so that there is an element $T$ of $\omega$ that is deleted in the reduction process. Without loss of generality, no subset of $T$ belongs to $\omega$. Let $T'$ be the cardinality-minimal element of $\omega$ strictly containing $T$. Let $S$ denote  the set of elements of $T'$ that are connected to an element of $T$ by an edge of $\Gamma$. Since $T$ is deleted in the reduction process, the minimal element of $S$ is greater than the free vertex of $T$ in $\omega^{\mathrm{ind}}$. For all $u\in S$ except for the only element of $\lambda(T')$ in $\mathbb{T}_{\omega^{\mathrm{ind}}}$, the element $V_{\omega^{\mathrm{ind}},u}$ cannot survive the reduction process, and hence $u$ is greater than the only element of $\lambda(T)$ in $\mathbb{T}_{\omega^{\mathrm{ind}}}$. As no subset of $T$ belongs to $\omega$, that latter element is precisely $\max(T)$, and hence the two elements $\{T,T'\}$ give a divisor of $\omega$ that is a leading term of an element of our Gr\"obner basis. 
\end{proof}

These results together imply that for each component of the free reconnectad, the number of monomials supported at $\Gamma$ that are normal with respect to our Gr\"obner basis of $\grHyperCom^{\mathsf{f}}$ does not exceed $|N^+_{|V_\Gamma|}(\Gamma)|$. Indeed, for each such monomial $\omega$,
we have $(\omega^{\mathrm{ind}})^{\mathrm{red}}=\omega$, so the induction is injective. According to Proposition \ref{prop:normal}, this implies that
 \[
\dim \grHyperCom(\Gamma) \leq \dim H_{\bullet}(\calW_{\mathbb{C}}(\Gamma)),
 \]
so the previously constructed surjection must be an isomorphism. 
\end{proof}

\subsection{Geometrical proof of the Koszul property}

Similarly to the proof of Getzler in the case of the operad $\mathsf{HyperCom}$ \cite{MR1363058}, one can prove the Koszul property of the reconnectad $H_{\bullet}(\calW_{\mathbb{C}})$ more geometrically, illustrating the general slogan that ``operadic structures arising from compactifications with normal crossing divisors are Koszul, and the Koszul duality is the duality between the open part and the compactification''. 

\begin{proposition}\label{prop:delignekoszul}
The reconnectad $H_{\bullet}(\calW_{\mathbb{C}})$ is Koszul.
\end{proposition}

\begin{proof}
According to \cite{MR498551}, if a smooth projective complex algebraic variety $M$ is represented as $M=U\sqcup D$ where $D$ is a normal crossing divisor $D$ with components $D_1$,\ldots, $D_N$, one can define the sheaf of logarithmic differential forms $\calE_M^\bullet(\log D)$
 \[
H^\bullet(U)\cong H^\bullet(M,\calE_M^\bullet(\log D)),
 \]
and there is the Deligne spectral sequence   
 \[
E_1^{-p,q}=H^{-2p+q}(D^p,\epsilon_p)\Rightarrow E_\infty^{-p,q}=\mathrm{gr}_W H^{-p+q}(U)
 \]
where 
 \[
D^p=\bigsqcup_{i_1<\cdots<i_p} D_{i_1}\cap\cdots D_{i_p},
 \]
and $\epsilon_p$ is the locally constant line bundle which over the component $D_{i_1}\cap\cdots D_{i_p}$ is the sign representation of the group of permutations of $i_1,\ldots,i_p$ placed in homological degree $p$. 

In our case, if we take $M=X_{\mathbb{C}}(\Gamma)$ and $U=X_{\mathbb{C}}(\Gamma,\{V_\Gamma\})$, the components of $D$ are indexed by elements $V_\Gamma\ne T\in G_\Gamma$, and the intersections $D^p$ are indexed by the set $N^+_{p+1}(\Gamma)\subset N^+(\Gamma)$ consisting of elements of $N^+(\Gamma)$ of cardinality $p+1$ elements, therefore 
 \[
H^{-2p+q}(D^p,\epsilon_p)=\bigoplus_{\tau\in N^+_{p+1}(\Gamma)}H^{-2p+q}(\overline{X(\Gamma,\tau)},\epsilon_p).
 \]
The differential $d_1$ of the spectral sequence is the composition
\[\begin{tikzcd}
\bigoplus\limits_{\tau\in N^+_{p+1}(\Gamma)}H^{-2p+q}(\overline{X(\Gamma,\tau)},\epsilon_p) \arrow[swap]{d}{\mathrm{PD}} & 
\bigoplus\limits_{\tau\in N^+_{p}(\Gamma)}H^{-2p+2+q}(\overline{X(\Gamma,\tau)},\epsilon_p)  \\
\bigoplus\limits_{\tau\in N^+_{p+1}(\Gamma)}H_{2(|V_\Gamma|-1)-q}(\overline{X(\Gamma,\tau)},\epsilon_p)\arrow{r}{} 
& \bigoplus\limits_{\tau\in N^+_{p}(\Gamma)}H_{2(|V_\Gamma|-1)-q}(\overline{X(\Gamma,\tau)},\epsilon_p)\arrow{u}{\mathrm{PD}}
\end{tikzcd}
\]
where the vertical arrows are induced by the Poincar\'e duality, and the horizontal arrow is induced by inclusions of strata. Thus, we see that the first page of the Deligne spectral sequence computes the homology of the bar construction of the reconnectad $H_{\bullet}(\calW_{\mathbb{C}})$. Note that the appearance of $\epsilon_p$ corresponds to the fact that the bar construction is the cofree coreconnectad on the shift $s H_{\bullet}(\calW_{\mathbb{C}})$. 
Since the mixed Hodge structure of $H^p(X_{\mathbb{C}}(\Gamma,\{V_\Gamma\}))=H^p((\mathbb{C}^\times)^{V_\Gamma}/\mathbb{C}^\times)$ is manifestly pure of weight $2p$, we have 
 \[
E_2^{-p,q}=
\begin{cases}
H^p(X_{\mathbb{C}}(\Gamma,\{V_\Gamma\})), \text{ if } q=2p,\\
\qquad  0 \qquad\qquad \text{ otherwise}. 
\end{cases}
 \]
This implies that the reconnectad $H_{\bullet}(\calW_{\mathbb{C}})$ is Koszul, as required.
\end{proof}

As a consequence of this result, one can see that the Koszul dual reconnectad $\grGrav$ has another geometric construction, with the underlying graphical collection $\{H_{\bullet+1}(X_{\mathbb{C}}(\Gamma,\{V_\Gamma\}))\}$ and the reconnectad structure arising from the so called ``Poincar\'e residues'' \cite{MR498551}.

\bibliographystyle{plain}
\bibliography{biblio}

%\printbibliography

\end{document}